\newcommand{\ud}{\mathrm{d}}
\newcommand{\ii}{\mathrm{i}}
\newcommand{\R}{\mathbb R}
\theoremstyle{plain}
\newtheorem{theorem}{Theorem}[section]
\newtheorem{lemma}[theorem]{Lemma}
\newtheorem{proposition}[theorem]{Proposition}
\theoremstyle{definition}
\newtheorem{remark}[theorem]{Remark}
\newtheorem*{remark*}{Remark}
\numberwithin{equation}{section}
\begin{document}

\title[Point-like perturbed fractional Laplacians with shrinking potentials]
{Point-like perturbed fractional Laplacians through shrinking potentials of finite range}
\author[A.~Michelangeli]{Alessandro Michelangeli}
\address[A.~Michelangeli]{International School for Advanced Studies -- SISSA \\ via Bonomea 265 \\ 34136 Trieste (Italy).}
\email{alemiche@sissa.it}
\author[R.~Scandone]{Raffaele Scandone}
\address[R.~Scandone]{International School for Advanced Studies -- SISSA \\ via Bonomea 265 \\ 34136 Trieste (Italy).}
\email{rscandone@sissa.it}


\begin{abstract}
We construct the rank-one, singular (point-like) perturbations of the $d$-dimensional fractional Laplacian in the physically meaningful norm-resolvent limit of fractional Schr\"{o}dinger operators with regular potentials centred around the perturbation point and shrinking to a delta-like shape. We analyse both possible regimes, the resonance-driven and the resonance-in\-de\-pend\-ent limit, depending on the power of the fractional Laplacian and the spatial dimension. To this aim, we also qualify the notion of zero-energy resonance for Schr\"{o}dinger operators formed by a fractional Laplacian and a regular potential.
\end{abstract}

\date{\today}

\subjclass[2000]{}
\keywords{Fractional Laplacian. Singular perturbations of differential operators. Schr\"{o}dinger operators with shrinking potentials. Resolvent limits. Zero-energy resonance.
}

%

\maketitle


\section{Introduction and background}\label{intro}

In the last decade an amount of studies focused, in particular in application to the context of fractional quantum mechanics, on linear Schr\"{o}dinger equations governed by the linear operator
\begin{equation}\label{eq:simbolicOP}
 (-\Delta)^{s/2}\;+\;\textrm{singular perturbation at $x_0$}
\end{equation}
for some fixed point $x_0\in\mathbb{R}^d$ and some $s>0$, that is, Schr\"{o}dinger equations for a singular perturbation of a fractional power of the Laplacian \cite{Muslih-IntJTP-2010-3D-fracLaplandDelta,COliveira-Costa-Vaz-JMP2010,COliveira-Vaz-JPA2011_1D_fracLaplandDelta,Lenzi-etAl-JMP2013_fracLapl_andDelta,Sandev-Petreska-Lenzi-JMP2014,Tare-Esguerra-JMP2014,Jarosz-Vaz-JMP2016_1D_gs_fracLaplandDelta,Nayga-Esguerra-JMP-2016,Sacchetti-fractional2018}.

Motivated by that, in a recent work in collaboration with A.~Ottolini \cite{MOS-2018-fractional-and-perturbation} we set up the systematic construction and classification of all the self-adjoint realisations in $L^2(\mathbb{R}^d)$ of the operators of the form \eqref{eq:simbolicOP} through a natural `restriction-extension' procedure: first one restricts the operator $(-\Delta)^{s/2}$ (initially defined, e.g., as a Fourier multiplier) to smooth functions vanishing in neighbourhoods of $x_0$, and then one builds all the operator extensions of such restriction that are self-adjoint on $L^2(\mathbb{R}^d)$.

This approach is surely satisfactory from the point of view of the interpretation of the output operator, which by construction is to be regarded as a point-like perturbation of the fractional Laplacian through an interaction supported only at $x_0$, say, ``$(-\Delta)^{s/2}+\delta(x-x_0)$''. However, it obfuscates an amount of physical meaning, since it does not provide information, as the intuition would make one expect instead, on how the actual singular perturbation \eqref{eq:simbolicOP} is approximatively realised as a genuine pseudo-differential operator $(-\Delta)^{s/2}+V(x-x_0)$ with a regular potential $V$ centred around $x=0$, with sufficiently short range and strong magnitude.

For the non-fractional Laplacian $-\Delta$ in $L^2(\mathbb{R}^d)$, the realisation of a singular perturbation at $x_0\in\mathbb{R}^d$ by means of approximating Schr\"{o}dinger operators $-\Delta+V_\varepsilon$ with regular potentials $V_\varepsilon$ spiking up and shrinking around $x_0$ at a spatial scale $\varepsilon^{-1}$ in the limit $\varepsilon\downarrow 0$ is known since long for dimension $d=1$ \cite{aghkk-1984-1D-point-int}
, $d=2$ \cite{AHKH-1987-2D}, and $d=3$ \cite{AGHK-1982} (we also refer to \cite{albeverio-solvable,albverio-kurasov-2000-sing_pert_diff_ops} for a comprehensive overview), that is, all the dimensions in which non-trivial singular perturbations exist.

The analogous question for the fractional Laplacian $(-\Delta)^{s/2}$ was unanswered so far, and we solve it in the present work.

Not only is it topical in view of the above-mentioned recent mainstream in the literature of fractional Schr\"{o}dinger equations with singular perturbation, but also it rises up the conceptually new issue of how a \emph{local} potential $V_\varepsilon$ can be suitably re-scaled so as to produce the desired perturbation of the \emph{non-local} operator $(-\Delta)^{s/2}$.

Let us first of all reconsider what emerges from the construction that, as mentioned, was recently given in \cite{MOS-2018-fractional-and-perturbation}.

For $s>0$ and $d\in\mathbb{N}$, the restriction $(-\Delta)^{s/2}|_{C^\infty_0(\mathbb{R}^d\setminus \{0\})}$ is a positive symmetric operator on the Hilbert space $L^2(\mathbb{R}^d)$, hence with equal deficiency indices, and for short we shall just speak of \emph{the} deficiency index. The number
\[
 \mathcal{J}(s,d)\;:=\;\textrm{deficiency index of $(-\Delta)^{s/2}|_{C^\infty_0(\mathbb{R}^d\setminus \{0\})}$}
\]
is \emph{finite}, and is zero or a strictly positive integer depending on $s$ and $d$,
according to the rule
\begin{equation}\label{eq:rule}
\begin{split}
 s\in I_{n}^{(d)}\,&:=\,
 \begin{cases}
  \qquad(0,\frac{d}{2}] & n=0 \\
  (\frac{d}{2}+n-1,\frac{d}{2}+n] & n =1,2,\dots
 \end{cases} \\
 &\;\Rightarrow \quad 
 \mathcal{J}(s,d)\;=\;
 \textstyle\begin{pmatrix}
  d+n-1 \\ d
 \end{pmatrix}.
\end{split}
\end{equation}

In the non-fractional case $s=2$ this yields the familiar values $\mathcal{J}(2,1)=2$, $\mathcal{J}(2,2)=1$, $\mathcal{J}(2,3)=1$, and $\mathcal{J}(2,d)=0$ for $d\geqslant 4$.

As well known, $\mathcal{J}(s,d)$ quantifies the infinite multiplicity of self-adjoint extensions of $(-\Delta)^{s/2}|_{C^\infty_0(\mathbb{R}^d\setminus \{0\})}$ in $L^2(\mathbb{R}^d)$. By means of standard methods of the Kre{\u\i}n-Vi\v{s}ik-Birman theory \cite{GMO-KVB2017} one sees that the domain of each extension is formed by functions that are canonically decomposed into a regular $H^s$-component and a more singular component, the latter belonging to the $\mathcal{J}(s,d)$-dimensional kernel of $((-\Delta)^{s/2}|_{C^\infty_0(\mathbb{R}^d\setminus \{0\})})^*+\lambda\mathbbm{1}$ for some arbitrarily chosen $\lambda>0$, and satisfy an amount of `boundary' (or `contact') conditions between the evaluation at $x=0$ of the regular part or of some if its derivatives and the coefficients of the leading singularities of the singular part as $x\to 0$. Each set of boundary conditions identifies uniquely an extension.

For concreteness of the presentation, in this work we consider the self-adjoint extensions of $(-\Delta)^{s/2}|_{C^\infty_0(\mathbb{R}^d\setminus \{0\})}$ in the case of deficiency index 1 only, and for simplicity we omit further the explicit discussion of the `endpoint' values of $s$, namely the largest possible value, at given $d$, compatible with $\mathcal{J}(s,d)=1$. As expressed by \eqref{eq:rule}, this amounts to analysing the regime $s\in(\frac{1}{2},\frac{3}{2})$ in $d=1$, $s\in(1,2)$ in $d=2$, $s\in(\frac{3}{2},\frac{5}{2})$ in $d=3$, etc., where the considered intervals are the non-endpoint values of $s$, the endpoint value being $s=\frac{d}{2}+1$. We shall refer to such cases as the `$\mathcal{J}=1$ scenario'. For this scenario we then discuss how to realise the corresponding extensions in the limit of Schr\"{o}dinger operators with fractional Laplacian and shrinking potentials, say, $(-\Delta)^{s/2}+V_\varepsilon$ as $\varepsilon\downarrow 0$.

In fact, it will be evident from our discussion that the behaviour and the control of the limit $\varepsilon\downarrow 0$ in the $\mathcal{J}=1$ scenario is technically the very same irrespectively of the dimension, and therefore we will pick up a concrete value of $d$ for the explicit computations, \emph{modulo the dichotomy that we now describe}.

When $\mathcal{J}(s,d)=1$, and $s\neq\frac{d}{2}+1$, the space where the above-mentioned singular components run over, namely $\ker(((-\Delta)^{s/2}|_{C^\infty_0(\mathbb{R}^d\setminus\{0\})})^*+\lambda\mathbbm{1})$, is the one-dimensional space spanned by the Green function $\mathsf{G}_{s,\lambda}$ of the fractional Laplacian, defined by
\[
 ((-\Delta)^{s/2}+\lambda)\mathsf{G}_{s,\lambda}\;=\;\delta(x)\,;
\]
for higher deficiency index the kernel is spanned by $\mathsf{G}_{s,\lambda}$ \emph{and} other non-$H^s$ functions. Now, depending on $d$ and $s$, the Green function $\mathsf{G}_{s,0}$ may be singular or regular at $x=0$: when $s<d$, $\mathsf{G}_{s,0}$ has a singularity $\sim|x|^{-(d-s)}$, it has a logarithmic singularity when $s=d$, and it is continuous at $x=0$ when $s>d$. Omitting the transition case, which does not alter the conceptual scheme of the present discussion and could be easily recovered with analogous arguments to those that we shall use when $s<d$, we thus distinguish two possibilities in the $\mathcal{J}=1$ scenario, that we call
\begin{itemize}
 \item \textsc{locally singular, or resonance-driven case:} $s<d$,
 \item \textsc{locally regular, or resonance-independent case:} $s>d$.
\end{itemize}
We shall explain in a moment the meaning of the `resonance' jargon: it has to do with how the limit of shrinking potentials must be organised in order to reach a self-adjoint extension of $(-\Delta)^{s/2}|_{C^\infty_0(\mathbb{R}^d\setminus \{0\})}$ in one case or in the other. Also, let us remark that an analogous dichotomy occurs when the deficiency index of $(-\Delta)^{s/2}|_{C^\infty_0(\mathbb{R}^d\setminus \{0\})}$ is larger than 1: the singular (non-$H^s$) component of the elements in the domain of the considered self-adjoint extension may or may not display a local singularity as $x\to 0$.

In view of the above alternative, we make the following presentational choice. Since in all dimensions $d$ but $d=1$ the interval $s\in(\frac{d}{2},\frac{d}{2}+1)$ corresponding to deficiency index 1 lies strictly below the transition value $s=d$ that separates the locally regular from the locally singular regime, as a representative of any such value of $d$ for concreteness we choose $d=3$: the discussion on the limit of shrinking potentials would then be \emph{immediately exportable to any other $d\geqslant 2$}. Next to that, we also discuss the case $d=1$, where instead the interval $s\in(1,2)$ corresponding to deficiency index 1 contains the transition value $s=1$.

As mentioned above, self-adjoint extensions of $(-\Delta)^{s/2}|_{C^\infty_0(\mathbb{R}^d\setminus \{0\})}$ in the locally regular and the locally singular case differ both in the type of non-regularity of the functions in their domain at $x=0$, and, as we shall show in this work, in the type of approximating Schr\"{o}dinger operators $(-\Delta)^{s/2}+V_\varepsilon$, meaning, in the scaling chosen for $V_\varepsilon$ and, most importantly, in the spectral requirements.

Extensions in the locally regular case can be reached as $\varepsilon\downarrow 0$ through suitably rescaled versions $V_\varepsilon$ of a given potential $V$ with no further prescription on $V$ but those technical assumptions ensuring that the limit itself is well-posed. Instead, extensions in the locally singular case can only be reached if the unscaled operator $(-\Delta)^{s/2}+V$ admits a \emph{zero-energy resonance}, a spectral behaviour at the bottom of its essential spectrum which we shall define in due time and roughly speaking amounts to the existence of a suitably decaying, non square-integrable, $L^2_\mathrm{loc}$-solution $f$ to $((-\Delta)^{s/2}+V)f=0$. In a sense that we shall make precise, this difference is due to the fact that a zero-energy resonance is needed in the approximating fractional Schr\"{o}dinger operator in order to reproduce in the limit the locally singular behaviour in the domain of the considered self-adjoint extension.

In fact, the phenomenon we have just described is the generalisation for $(-\Delta)^{s/2}$ of what is well known for $-\Delta$ (i.e., $s=2$ in the present notation). When $d=1$, the deficiency index of $(-\Delta)|_{C^\infty_0(\mathbb{R}\setminus \{0\})}$ equals 2 and
\[
 \ker\big(((-\Delta)|_{C^\infty_0(\mathbb{R}\setminus \{0\})})^*+\lambda\mathbbm{1}\big)\;=\;\mathrm{span}\big\{{\textstyle \frac{1}{\sqrt{\lambda}}\,e^{-\sqrt{\lambda}|x|}}, (\mathrm{sign} \,x)\,e^{-\sqrt{\lambda}|x|}\big\}
\]
(in particular, $\mathsf{G}_{2,\lambda}(x)=\sqrt{\frac{\pi}{2\lambda}}\,e^{-\sqrt{\lambda}|x|}$), therefore the functions in the above space are less regular than $H^2(\mathbb{R})$ but not locally singular at $x=0$. The so-called $\delta$-type extensions, namely those in the domain of which the singular component is $e^{-\sqrt{\lambda}|x|}$, can indeed be realised as limits of $-\Delta+V_\varepsilon$ with no spectral requirement needed at energy zero for the unscaled $-\Delta+V$ \cite[Chapt.~I.3]{albeverio-solvable}. On the contrary, when $d=3$ the deficiency index of $(-\Delta)|_{C^\infty_0(\mathbb{R}^3\setminus \{0\})}$ equals 1 and
\[
 \ker\big(((-\Delta)|_{C^\infty_0(\mathbb{R}^3\setminus \{0\})})^*+\lambda\mathbbm{1}\big)\;=\;\mathrm{span}\{\mathsf{G}_{2,\lambda}\}\,,\qquad  \mathsf{G}_{2,\lambda}(x)=\frac{e^{-\sqrt{\lambda}|x|}}{4\pi|x|}\,,
\]
thus with a local singularity at $x=0$. The self-adjoint extensions of $(-\Delta)|_{C^\infty_0(\mathbb{R}^3\setminus \{0\})}$ can be realised as limits of $-\Delta+V_\varepsilon$ provided that $-\Delta+V$ is zero-energy resonant \cite[Chapt.~I.1]{albeverio-solvable}. In the former situation we are in the locally regular, resonant-independent case; in the latter we are in the locally singular, resonant-driven case.

The material of this work is organised as follows.
\begin{itemize}
 \item In Section \ref{sec:3D} we define the singular perturbations of the three-dimensional fractional Laplacian and we present the approximation scheme in terms of fractional Schr\"odinger operators with regular, shrinking potentials.
 \item In Section \ref{sec:1D} we present the one-dimensional analogue, including the definition of the singular perturbations and the two distinct approximation schemes, for the resonance-driven and the resonance-independent cases.
 \item Section \ref{sec:proof_approx_3D} contains the proof of the three-dimensional limit.
 \item Section \ref{sec:proof_approx_1D} contains the proof of the one-dimensional limit in the resonance-driven case. From the technical point of view, the argument here is completely analogous to that of \ref{sec:proof_approx_3D}, as the 3D case too is resonance-driven.
 \item Section \ref{sec:proof_approx_1D-nores} contains instead the proof of the one-dimensional limit in the resonance-independent case.
 \item In Section \ref{sec:zero_en_res} we prove a technical result used in the main proofs, that is, the characterisation of the zero-energy resonant behaviour of the unscaled operator $(-\Delta)^{s/2}+V$. Then, we discuss the occurrence of zero-energy resonances.
\end{itemize}

Let us conclude this Introduction with a few comments about our otherwise standard notation. For an operator $T$ on a Hilbert space, $\mathcal{D}(T)$ denotes its operator domain and, when $T$ is self-adjoint, $\mathcal{D}[T]$ denotes its form domain. We shall denote by $\mathbbm{1}$, resp., by $\mathbbm{O}$, the identity and the null operator on any of the considered Hilbert spaces. We shall indicate the Fourier transform by $\widehat{\phi}$ or $\mathcal{F}\phi$ with the convention $\widehat{\phi}(p)=(2\pi)^{-\frac{d}{2}}\int_{\mathbb{R}^d}e^{-\ii p x}\phi(x)\ud x $. We shall write $A\lesssim B$ for $A\leqslant\mathrm{const.}\,B$ when the constant does not depend on the other relevant parameters or variables of both sides of the inequality; for $x\in\mathbb{R}^d$ we shall write $\langle x\rangle:=(1+x^2)^{\frac{1}{2}}$.


\section{Approximation scheme in dimension three}\label{sec:3D}

In this Section we consider the singular perturbations of the three-dimensional fractional Laplacian and their approximation by means of fractional Schr\"odinger operators with shrinking potentials.

Let us start with the densely defined, closed, positive, symmetric operator 
\begin{equation}
 \mathring{\mathsf{k}}^{(s/2)}\;:=\;\overline{(-\Delta)^{s/2}\upharpoonright C^\infty_0(\mathbb{R}^3\setminus \{0\})}\,,\qquad s>0\,,
\end{equation}
with respect to the Hilbert space $L^2(\mathbb{R}^3)$. In \cite{MOS-2018-fractional-and-perturbation} we presented the construction and classification of the self-adjoint extensions of $\mathring{\mathsf{k}}^{(s/2)}$, which we recall here below.

Clearly, for small enough powers $s$, $\mathring{\mathsf{k}}^{(s/2)}$ is already self-adjoint, thus with no room for point-like singular perturbations, indeed \cite[Lemma A.1]{MOS-2018-fractional-and-perturbation}
\begin{equation}
 \mathcal{D}(\mathring{\mathsf{k}}^{(s/2)})\;=\;H^s_0(\mathbb{R}^3\setminus \{0\})\;=\;\overline{C^\infty_0(\mathbb{R}^3\setminus \{0\})\,}^{\|\,\|_{H^s}}=\;H^s(\mathbb{R}^3)\,,\quad \textrm{if } s\in[0,{\textstyle\frac{3}{2}})\,. \!\!\!
\end{equation}

When $s$ increases, the domain of $\mathring{\mathsf{k}}^{(s/2)}$ is qualified by an increasing number of conditions for $H^s$-functions, namely \cite[Lemma A.2]{MOS-2018-fractional-and-perturbation}
\begin{equation}
 \begin{split}
  \mathcal{D}(\mathring{\mathsf{k}}^{(s/2)})\;&=\;\left\{\!\!
 \begin{array}{c}
  f\in H^s(\mathbb{R}^3) \textrm{ such that}\\
  \int_{\mathbb{R}^3}p_1^{\gamma_1}p_2^{\gamma_2}p_3^{\gamma_3}\widehat{f}(p)\,\ud p=0 \\
  \gamma_1,\gamma_2,\gamma_3\in\mathbb{N}_0\,,\;\gamma_1+\gamma_2+\gamma_3\leqslant n-1
 \end{array}\!\!
 \right\} \\
 \textrm{if } s\;&\in\;I_n^{(3)}\;:=\;\textstyle(n+\frac{1}{2},n+\frac{3}{2})\,,\qquad n\in\,\mathbb{N}\,;
 \end{split}
\end{equation}
as a consequence, the adjoint of $\mathring{\mathsf{k}}^{(s/2)}$ becomes strictly larger than $\mathring{\mathsf{k}}^{(s/2)}$ itself, with an increasingly complicated structure of its domain that reflects the fact that for $s\in I_n^{(3)}$ the deficiency index of $\mathring{\mathsf{k}}^{(s/2)}$ equals $\begin{pmatrix} n+2 \\ 3 \end{pmatrix}$, and this in turn affects the structure of the family of its self-adjoint extensions.

In particular, in the regime $s\in I_1^{(3)}=(\frac{3}{2},\frac{5}{2})$ one has 
\begin{equation}
 \mathcal{D}(\mathring{\mathsf{k}}^{(s/2)})\;=\;\Big\{f\in H^s(\mathbb{R}^3)\,\Big|\!\int_{\mathbb{R}^3}\widehat{f}(p)\,\ud p=0\Big\} \qquad\textrm{if } s\in\textstyle(\frac{3}{2},\frac{5}{2})
\end{equation}
and $\mathring{\mathsf{k}}^{(s/2)}$ has deficiency index 1, which leaves room for a one-(real-)parameter family of self-adjoint extensions. In order to qualify them, for chosen $\lambda>0$ and  $s\in\mathbb{R}$ let us denote the Green's function as
\begin{equation}\label{eq:sfG}
  \mathsf{G}_{s,\lambda}(x)\;:=\;\frac{1}{\:(2\pi)^{\frac{3}{2}}}\Big(\frac{1}{|p|^{s}+\lambda}\Big)^{\!\vee}(x)\,,\qquad x,p\in\mathbb{R}^3\,.
\end{equation}
By construction, distributionally.
\begin{equation}
 ((-\Delta)^{s/2}+\lambda) \,\mathsf{G}_{s,\lambda}\;=\;\delta(x)\,.
\end{equation}
Observe that $\mathsf{G}_{s,\lambda}$ has a local singularity $|x|^{-(3-s)}$, more precisely \cite[Sec.~3]{MOS-2018-fractional-and-perturbation},
\begin{equation}\label{eq:sfGlambda_asympt}
 \mathsf{G}_{s,\lambda}(x)\;=\;\frac{\Lambda_s}{\;|x|^{(3-s)}}+\mathsf{J}_{s,\lambda}(x) 
\end{equation}
with
\begin{equation}\label{eq:sfGlambda_asympt-2}
 \begin{split}
  \Lambda_s\;&:=\;\frac{\Gamma(\frac{3-s}{2})}{\,(2\pi)^{\frac{3}{2}}\,2^{s-\frac{3}{2}}\,\Gamma(\frac{s}{2})} \\
  \mathsf{J}_{s,\lambda}\;&:=\;-\frac{\lambda}{\:(2\pi)^{\frac{3}{2}}}\Big(\frac{1}{|p|^s(|p|^{s}+\lambda)}\Big)^{\!\vee}\;\in\;C_\infty(\mathbb{R}^3)\,.
 \end{split}
\end{equation}

The following construction/classification Theorem was established in \cite{MOS-2018-fractional-and-perturbation}.

\begin{theorem}\label{thm:Kalpha} Let $s\in(\frac{3}{2},\frac{5}{2})$.

\begin{itemize}
 \item[(i)] The self-adjoint extensions in $L^2(\mathbb{R}^3)$ of the operator $\mathring{\mathsf{k}}^{(s/2)}$ form the family $(\mathsf{k}^{(s/2)}_\alpha)_{\alpha\in\mathbb{R}\cup\{\infty\}}$, where $\mathsf{k}^{(s/2)}_\infty$ is its Friedrichs extension, namely the self-adjoint fractional Laplacian $(-\Delta)^{s/2}$, and all other (proper) extensions are given, for arbitrary $\lambda>0$, by
  \begin{equation}\label{eq:domKwithalpha}
 \begin{split}
  \mathcal{D}(\mathsf{k}^{(s/2)}_\alpha)\;&=\;\left\{\left. g=F^\lambda+{\displaystyle\frac{F^\lambda(0)}{\,\alpha-\frac{\lambda^{\frac{3}{s}-1}}{2\pi s\sin(\frac{3\pi}{s})}}\,}\,\mathsf{G}_{s,\lambda}\,\right|F^\lambda\in H^s(\mathbb{R}^3)\right\} \\
  (\mathsf{k}^{(s/2)}_\alpha+\lambda)\,g\;&=\;((-\Delta)^{s/2}+\lambda)\,F^\lambda\,.
 \end{split}
\end{equation}
  \item[(ii)] For  each $\alpha\in\mathbb{R}$ the quadratic form of the extension $\mathsf{k}^{(s/2)}_\alpha$ is given by
\begin{eqnarray}
   \mathcal{D}[\mathsf{k}^{(s/2)}_\alpha]\!&=&\!\! H^{\frac{s}{2}}(\mathbb{R}^3)\dotplus \mathrm{span}\{\mathsf{G}_{s,\lambda}\}  \label{eq:Kring-alpha_form1}\\
 \qquad \mathsf{k}^{(s/2)}_\alpha[F^\lambda+\kappa_\lambda \mathsf{G}_{s,\lambda}]\!\!&=&\!\! \||\nabla|^s F^\lambda\|_{L^2(\mathbb{R}^3)}^2-\lambda\|F^\lambda+\kappa_\lambda \mathsf{G}_{s,\lambda}\|_{L^2(\mathbb{R}^3)}^2 \nonumber \\
 & & \!\!\!\!\!\!\!\!\!\!\!\!\!\!\!\!\!\!\!\!\!\!\!\!+\,\lambda\|F^\lambda\|_{L^2(\mathbb{R}^3)}^2  +{\textstyle\big(\alpha-\frac{\lambda^{\frac{3}{s}-1}}{2\pi s\sin(\frac{3\pi}{s})}\big)}|\kappa_\lambda|^2 \label{eq:Kring-alpha_form2}
 \end{eqnarray}
 for arbitrary $\lambda>0$.
  \item[(iii)] The resolvent of $\mathsf{k}^{(s/2)}_\alpha$ is given by
  \begin{equation}\label{eq:Kalpha_res}
   \begin{split}
      (\mathsf{k}^{(s/2)}_\alpha+\lambda\mathbbm{1})^{-1}\;&=\;((-\Delta)^{s/2}+\lambda\mathbbm{1})^{-1} \\
      &\quad +{\textstyle\big(\alpha-\frac{\lambda^{\frac{3}{s}-1}}{2\pi s\sin(\frac{3\pi}{s})}\big)^{\!-1}}\, |\mathsf{G}_{s,\lambda}\rangle\langle \mathsf{G}_{s,\lambda}|
   \end{split}
  \end{equation}
  for arbitrary $\lambda>0$.
 \item[(iv)] Each extension is semi-bounded from below, and
 \begin{equation}\label{eq:spec-kalpha}
  \begin{split}
   \sigma_{\mathrm{ess}}(\mathsf{k}^{(s/2)}_\alpha)\;&=\;\sigma_{\mathrm{ac}}(\mathsf{k}^{(s/2)})\;=  \;[0,+\infty)\,,\qquad \sigma_{\mathrm{sc}}(\mathsf{k}^{(s/2)})\;=\;\emptyset\,, \\
   \sigma_\mathrm{disc}(\mathsf{k}^{(s/2)}_\alpha)\;&=\;
   \begin{cases}
    \quad \emptyset & \textrm{ if } \alpha\geqslant 0 \\
    \{E_\alpha^{(s)}\} & \textrm{ if } \alpha < 0\,,
   \end{cases}
  \end{split}
 \end{equation}
 where the eigenvalue $E_\alpha^{(s)}$ is non-degenerate and is given by
 \begin{equation}\label{eq:KalphanegEV}
 E_\alpha^{(s)}\;=\;-\big(2\pi|\alpha|\,s\,\sin(-{\textstyle\frac{3\pi}{s})}\big)^{\frac{s}{3-s}}\,,
\end{equation}
  the (non-normalised) eigenfunction being $\mathsf{G}_{s,\lambda=|E_\alpha^{(s)}|}$.
 \end{itemize}
\end{theorem}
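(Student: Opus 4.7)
The plan is to apply the Kre{\u\i}n--Vi\v{s}ik--Birman (KVB) extension theory to $\mathring{\mathsf{k}}^{(s/2)}$, which in the regime $s\in(\frac{3}{2},\frac{5}{2})$ is a densely defined, closed, positive symmetric operator with deficiency index $1$. First I would identify the one-dimensional deficiency subspace: solving $((-\Delta)^{s/2}+\lambda)f=0$ distributionally for $f\in L^2(\mathbb{R}^3)$ yields $\widehat f(p)=c\,(|p|^s+\lambda)^{-1}$, whence
\[
 \ker\bigl((\mathring{\mathsf{k}}^{(s/2)})^{*}+\lambda\mathbbm{1}\bigr)\;=\;\mathrm{span}\{\mathsf{G}_{s,\lambda}\}\,,
\]
the constraint $s\in(\frac{3}{2},\frac{5}{2})$ being precisely what makes $\mathsf{G}_{s,\lambda}\in L^2(\mathbb{R}^3)\setminus H^s(\mathbb{R}^3)$ and the kernel one-dimensional.

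Since the deficiency index equals $1$, KVB parametrises all proper self-adjoint extensions by one real number $\alpha\in\mathbb{R}$, besides the Friedrichs extension $\mathsf{k}^{(s/2)}_\infty=(-\Delta)^{s/2}$. I would then write a generic element of $\mathcal{D}((\mathring{\mathsf{k}}^{(s/2)})^{*})$ in the canonical form $g=F^\lambda+c\,\mathsf{G}_{s,\lambda}$ with $F^\lambda\in H^s(\mathbb{R}^3)$; crucially, $s>\frac{3}{2}$ allows the pointwise trace $F^\lambda(0)$ via Sobolev embedding. The self-adjoint boundary condition then takes the form $c=(\alpha-\beta_s(\lambda))^{-1}F^\lambda(0)$ for a suitable Kre{\u\i}n $Q$-function $\beta_s(\lambda)$, and the action $(\mathsf{k}^{(s/2)}_\alpha+\lambda)\,g=((-\Delta)^{s/2}+\lambda)F^\lambda$ is forced by the adjoint action on the regular part, giving \eqref{eq:domKwithalpha}.

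The crucial explicit computation is the evaluation of the $Q$-function as $\beta_s(\lambda)=\mathsf{J}_{s,\lambda}(0)$, the regular part of $\mathsf{G}_{s,\lambda}$ at the origin. From \eqref{eq:sfGlambda_asympt-2},
\[
 \mathsf{J}_{s,\lambda}(0)\;=\;-\frac{\lambda}{(2\pi)^{3}}\int_{\mathbb{R}^3}\frac{\ud p}{|p|^s(|p|^s+\lambda)}\,;
\]
passing to spherical coordinates and substituting $u=|p|^s/\lambda$ turns this into a Beta integral that, via Euler's reflection formula, evaluates to $-\lambda^{3/s-1}(2\pi s\sin(3\pi/s))^{-1}$, matching exactly the $\lambda$-dependent shift in \eqref{eq:domKwithalpha}. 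This is the only ingredient peculiar to the fractional setting.

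The remaining parts follow algorithmically. Part (iii) is Kre{\u\i}n's resolvent identity: the rank-one correction to $((-\Delta)^{s/2}+\lambda)^{-1}$ by $(\alpha-\beta_s(\lambda))^{-1}|\mathsf{G}_{s,\lambda}\rangle\langle\mathsf{G}_{s,\lambda}|$ is the unique self-adjoint resolvent compatible with (i), which one checks by verifying that its range lies in $\mathcal{D}(\mathsf{k}^{(s/2)}_\alpha)$. For part (ii), I would evaluate $\langle g,(\mathsf{k}^{(s/2)}_\alpha+\lambda)g\rangle$ on $g=F^\lambda+\kappa_\lambda\mathsf{G}_{s,\lambda}$ using the action just derived and repackage the cross-terms into the form $\lambda\|F^\lambda\|^2+(\alpha-\beta_s(\lambda))|\kappa_\lambda|^2$ appearing in \eqref{eq:Kring-alpha_form2}. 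Part (iv) is then standard: the rank-one resolvent difference and Weyl's theorem give $\sigma_{\mathrm{ess}}(\mathsf{k}^{(s/2)}_\alpha)=[0,+\infty)$, while negative eigenvalues correspond to zeros of $\lambda\mapsto\alpha-\beta_s(\lambda)$ on $(0,+\infty)$, which exist iff $\alpha<0$ and yield \eqref{eq:KalphanegEV} with eigenfunction $\mathsf{G}_{s,|E_\alpha^{(s)}|}$ (this lies in $\mathcal{D}(\mathsf{k}^{(s/2)}_\alpha)$ with $F^{|E_\alpha^{(s)}|}\equiv 0$, consistent with the boundary condition). The main technical obstacle is the Beta-function evaluation producing $\beta_s(\lambda)$ with the precise trigonometric constant; once this is in place, the rest of the theorem is a direct application of the KVB machinery.
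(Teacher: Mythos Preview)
Your approach is correct and is essentially the one behind the result. Note, however, that the present paper does \emph{not} prove Theorem~\ref{thm:Kalpha}: it is quoted from \cite{MOS-2018-fractional-and-perturbation}, where the construction is carried out precisely via the Kre{\u\i}n--Vi\v{s}ik--Birman machinery (as the paper itself indicates in Section~\ref{intro} by referencing \cite{GMO-KVB2017}). Your outline---identification of $\ker((\mathring{\mathsf{k}}^{(s/2)})^*+\lambda)=\mathrm{span}\{\mathsf{G}_{s,\lambda}\}$, the boundary-condition parametrisation through the trace $F^\lambda(0)$ (well defined by Sobolev embedding since $s>\frac32$), the computation of the Kre{\u\i}n $Q$-function as $\mathsf{J}_{s,\lambda}(0)$, and the rank-one Kre{\u\i}n resolvent formula---matches that construction step by step. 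One small slip: your Beta-integral evaluation of $\mathsf{J}_{s,\lambda}(0)$ carries an extra minus sign; the correct value is $\mathsf{J}_{s,\lambda}(0)=\lambda^{3/s-1}\big(2\pi s\sin(3\pi/s)\big)^{-1}$ (cf.\ the computation in the proof of Lemma~\ref{lem:GB_expansion}(i)), which is what appears in \eqref{eq:domKwithalpha}.
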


Our goal now is to qualify each of the extensions given by Theorem \ref{thm:Kalpha} as suitable limits of approximating fractional Schr\"{o}dinger operators with finite range potentials.

It is convenient to introduce the class $\mathcal{R}_{s,d}$, $d\in\mathbb{N}$, $s\in(\frac{d}{2},d)$, of measurable functions $V:\mathbb{R}^d\to\mathbb{C}$ such that
\begin{equation}\label{eq:Rclass}
 \iint_{\mathbb{R}^d\times\mathbb{R}^d}\!\!\ud x\,\ud y\,\frac{|V(x)|\,|V(y)|}{\;|x-y|^{2(d-s)}\,}\;=:\;\|V\|_{\mathcal{R}_{s,d}}^2\;<\;+\infty\,.
\end{equation}
$\mathcal{R}_{2,3}$ is the well-known Rollnick class on $\mathbb{R}^3$. Clearly, $\mathcal{R}_{s,d}\supset C^\infty_0(\mathbb{R}^d)$.

For each $s\in(\frac{3}{2},\frac{5}{2})$ we make the following assumption.

\medskip

\textbf{Assumption (I$_s$).}
\begin{itemize}
 \item[(i)] $V:\mathbb{R}^3\to\mathbb{R}$ is a measurable function in $L^1(\mathbb{R}^3,\langle x\rangle^{2s-3}\ud x)\cap\mathcal{R}_{s,3}$.
 \item[(ii)] $\eta:\mathbb{R}\to\mathbb{R}^+$ is a continuous function satisfying $\eta(0)=\eta(1)=1$ and
 \[
  \eta(\varepsilon)\;=\;1+\eta_s\,\varepsilon^{3-s}+o(\varepsilon^{3-s})\qquad\textrm{as }\varepsilon\downarrow 0
 \]
 for some $\eta_s\in\mathbb{R}$ that we call the \emph{strength} of the \emph{distortion factor} $\eta$.
\end{itemize}

\medskip

For given $V$ and $\eta$ satisfying Assumption (I$_s$), let us set 
\begin{equation}\label{eq:heps-Veps}
 h_\varepsilon^{(s/2)}\;:=\;(-\Delta)^{s/2}+V_\varepsilon\,,\qquad V_\varepsilon(x)\;:=\;\frac{\,\eta(\varepsilon)}{\:\varepsilon^s}\,V(\textstyle\frac{x}{\varepsilon})\,,\qquad\varepsilon>0\,.
\end{equation}
For every $\varepsilon>0$ the operator $h_\varepsilon^{(s/2)}$, defined as a form sum, is self-adjoint on $L^2(\mathbb{R}^3)$ and $\sigma_{\mathrm{ess}}(h_\varepsilon^{(s/2)})=[0,+\infty)$ (Lemma \ref{lem:compactness}(iii)).

The spectral properties of the unscaled operator $(-\Delta)^{s/2}+V$ at the bottom of the essential spectrum are crucial for the limit $\varepsilon\downarrow 0$ in $h_\varepsilon^{(s/2)}$. In the next Theorem we qualify the zero-energy behaviour of $(-\Delta)^{s/2}+V$.

\begin{theorem}\label{thm:resonance}
 Let $s\in(\frac{3}{2},\frac{5}{2})$, $V\in L^1(\mathbb{R}^3,\langle x\rangle^{2s-3}\ud x)\cap\mathcal{R}_{s,3}$, real-valued. Let $v:=|V|^{\frac{1}{2}}$ and $u:=|V|^{\frac{1}{2}}\mathrm{sign}(V)$.
 \begin{itemize}
  \item[(i)] The operator $u(-\Delta)^{-\frac{s}{2}}v$ is compact on $L^2(\mathbb{R}^3)$.
 \end{itemize}
 Assume in addition that 
 \begin{equation}\label{eq:phi}
  u(-\Delta)^{-\frac{s}{2}}v\,\phi\;=\;-\phi\qquad\textrm{for some } \phi\in L^2(\mathbb{R}^3)\setminus \{0\}
 \end{equation}
 and define
 \begin{equation}\label{eq:psi}
  \psi\;:=\;(-\Delta)^{-\frac{s}{2}}v\,\phi\,.
 \end{equation}
 Then:
 \begin{itemize}
  \item[(ii)] $\psi\in L^2_{\mathrm{loc}}(\mathbb{R}^3)$ and  $\big((-\Delta)^{s/2}+V\big)\psi=0$ in the sense of distributions,
  \item[(iii)] $\langle v,\phi\rangle_{L^2}\;=\;-\displaystyle\int_{\mathbb{R}^3}\ud x\,V(x)\psi(x)$,
  \item[(iv)] $\psi\in L^2(\mathbb{R}^3)$ $\Leftrightarrow$ $\langle v,\phi\rangle_{L^2}=0$, in which case $\psi\in\mathcal{D}((-\Delta)^{s/2}+V)$.
 \end{itemize}
\end{theorem}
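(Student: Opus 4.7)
My plan is to trade direct-space and Fourier-space information on $\psi=(-\Delta)^{-s/2}(v\phi)$, whose defining kernel is the Riesz potential $\mathsf{G}_{s,0}(x)=\Lambda_s|x|^{s-3}$. For (i), I identify $u(-\Delta)^{-s/2}v$ as the integral operator with kernel $\Lambda_s u(x)v(y)|x-y|^{s-3}$ and compute its squared Hilbert--Schmidt norm as $\Lambda_s^2\|V\|_{\mathcal{R}_{s,3}}^2<+\infty$, so the operator is Hilbert--Schmidt and hence compact on $L^2(\mathbb{R}^3)$. For (ii), I first establish $\psi\in L^2_{\mathrm{loc}}(\mathbb{R}^3)$ by Cauchy--Schwarz on the defining integral, giving
\[
 |\psi(x)|^2\;\leqslant\;\Lambda_s^2\|\phi\|_{L^2}^2\int\frac{v(y)^2}{|x-y|^{2(3-s)}}\,\ud y\,;
\]
integrating over a ball $B$ and swapping integration order, the contribution from $y$ close to $B$ is finite because $2(3-s)<3$ for $s>\frac{3}{2}$, and the tail from $y$ far from $B$ is controlled by the hypothesis $V\in L^1(\langle y\rangle^{2s-3}\ud y)$, since in that range $|x-y|^{-2(3-s)}\lesssim\langle y\rangle^{2s-6}\leqslant\langle y\rangle^{2s-3}$. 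Multiplying the eigenvalue equation $\phi=-u\psi$ (a.e.) by $v$ and using $uv=V$ produces the crucial pointwise identity $V\psi=-v\phi$, in particular $V\psi\in L^1(\mathbb{R}^3)$; combined with $(-\Delta)^{s/2}\psi=v\phi$ (valid as tempered distributions, directly from the Fourier definition of $\psi$), this gives $((-\Delta)^{s/2}+V)\psi=0$ distributionally. For (iii), the same pointwise identity is integrated over $\mathbb{R}^3$.

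For (iv), I work on the Fourier side, where $\widehat\psi(p)=|p|^{-s}\widehat{v\phi}(p)$. Since $v\phi\in L^1$ (so $\widehat{v\phi}\in L^\infty$), the high-frequency contribution to $\|\widehat\psi\|_{L^2}^2=\int|p|^{-2s}|\widehat{v\phi}(p)|^2\,\ud p$ is always finite thanks to $s>\frac{3}{2}$. At low frequencies $|p|^{-2s}$ fails to be locally integrable in $\mathbb{R}^3$ for $s>\frac{3}{2}$, so a necessary condition for $\psi\in L^2$ is the vanishing of $\widehat{v\phi}(0)=(2\pi)^{-3/2}\langle v,\phi\rangle_{L^2}$, giving the necessity direction. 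Conversely, assuming $\langle v,\phi\rangle_{L^2}=0$, I would exploit the weight $\langle y\rangle^{2s-3}$ on $|V|=v^2$, combined with Cauchy--Schwarz between $v\in L^2$ and $\phi\in L^2$, to derive the H\"older-type vanishing of $\widehat{v\phi}$ at the origin needed to place $|p|^{-s}\widehat{v\phi}$ in $L^2$ in a neighbourhood of $p=0$. Once $\psi\in L^2$ is known, the identities of (ii) give $(-\Delta)^{s/2}\psi=-V\psi$ consistently at the $L^2$ level, placing $\psi$ inside $\mathcal{D}((-\Delta)^{s/2}+V)$.

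I expect the main obstacle to lie in the sufficiency direction of (iv): naive Cauchy--Schwarz bounds with the weight $\langle y\rangle^{2s-3}$ give only borderline behaviour of $|p|^{-s}\widehat{v\phi}$ near $p=0$, and closing the $L^2$ estimate requires jointly exploiting the cancellation $\widehat{v\phi}(0)=0$ and the full $\mathcal{R}_{s,3}$ structure rather than just pointwise estimates. All other parts---the Hilbert--Schmidt computation in (i), the $L^2_{\mathrm{loc}}$ control and distributional equation in (ii), and the integration identity in (iii)---are comparatively routine once the kernel representations are in place.
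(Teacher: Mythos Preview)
Your treatment of (i)--(iii) is correct and close to the paper's; for $\psi\in L^2_{\mathrm{loc}}$ your direct Cauchy--Schwarz argument is actually simpler than the paper's, which obtains $L^2_{\mathrm{loc}}$ as a by-product of a global decomposition (see below). The distributional equation and the integration identity are handled identically.

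The difference lies in (iv). You correctly see that a \emph{pointwise} H\"older estimate $|\widehat{v\phi}(p)|\lesssim|p|^{s-3/2}$ is the best the weighted-$L^1$ hypothesis can give, and that this lands exactly on the non-integrable borderline $|p|^{-3}$. But your proposed remedy---invoking the $\mathcal{R}_{s,3}$ structure---is off-track: the paper does not use $\mathcal{R}_{s,3}$ anywhere in (iv), and indeed that condition controls $v(-\Delta)^{-s/2}v$ as a Hilbert--Schmidt operator, not the low-frequency behaviour of $\widehat{v\phi}$. What resolves the borderline issue is simply to avoid a uniform pointwise bound: after Cauchy--Schwarz in $y$, swap the $p$- and $y$-integrations and estimate
\[
\int_{\mathbb{R}^3}|p|^{-2s}\,|e^{-\ii p\cdot y}-1|^2\,\ud p\;\lesssim\;\langle y\rangle^{2s-3},
\]
which then pairs directly with $V\in L^1(\langle y\rangle^{2s-3}\ud y)$ to close the estimate. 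By Plancherel this is the very same computation the paper carries out in physical space, namely
\[
\int_{\mathbb{R}^3}\Big(\frac{1}{|x-y|^{3-s}}-\frac{1}{|x|^{3-s}}\Big)^{\!2}\ud x\;\lesssim\;|y|^{2s-3}.
\]
The paper packages both directions of (iv) (and, incidentally, $L^2_{\mathrm{loc}}$ in (ii)) by writing the single decomposition
\[
\psi(x)\;=\;\frac{\Lambda_s\langle v,\phi\rangle_{L^2}}{|x|^{3-s}}\;+\;\psi_1(x),\qquad \psi_1\in L^2(\mathbb{R}^3)\ \text{unconditionally},
\]
from which $\psi\in L^2\Leftrightarrow\langle v,\phi\rangle=0$ is immediate since $|x|^{-(3-s)}\notin L^2(\mathbb{R}^3)$. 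Your Fourier-side approach is the Plancherel dual of this; once you replace the pointwise H\"older step by the integrated estimate above, it goes through with only the weighted-$L^1$ assumption, no $\mathcal{R}_{s,3}$ needed.
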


When a $L^2$-function $\phi$ exists that satisfies \eqref{eq:phi} and the corresponding function $\psi$ defined by \eqref{eq:psi} belongs to $L^2_{\mathrm{loc}}(\mathbb{R}^3)\setminus L^2(\mathbb{R}^3)$ we say that $(-\Delta)^{s/2}+V$ is \emph{zero-energy resonant} and that $\psi$ is a \emph{zero-energy resonance} for $(-\Delta)^{s/2}+V$. If for the zero-energy resonant operator $(-\Delta)^{s/2}+V$ the eigenvalue $-1$ of $u(-\Delta)^{-\frac{s}{2}}v$ is non-degenerate, then we say that the resonance is \emph{simple}.
Of course, if $\psi\in L^2(\mathbb{R}^3)$, then $\psi$ is an eigenfunction of $(-\Delta)^{s/2}+V$ with eigenvalue zero.

We shall prove Theorem \ref{thm:resonance} in Section \ref{sec:zero_en_res} together with a discussion of the occurrence of a zero-energy resonance for $(-\Delta)^{s/2}+V$ (Proposition \ref{prop:exist_res}).

Let us now formulate our main result for dimension three. It is the control of the approximation, in the norm resolvent sense, of the singular perturbation operator $\mathsf{k}^{(s/2)}_\alpha$ by means of Schr\"{o}dinger operators with the $\frac{s}{2}$-th fractional Laplacian and shrinking potentials $V_\varepsilon$ around the origin. We shall prove it in Section \ref{sec:proof_approx_3D}.

\begin{theorem}\label{thm:shrinking3D}
 Let $s\in(\frac{3}{2},\frac{5}{2})$. Given a potential $V$ and a distortion factor $\eta$ with strength $\eta_s$ satisfying Assumption \emph{(I$_s$)}, for every $\varepsilon>0$ let $h_\varepsilon^{(s/2)}=(-\Delta)^{s/2}+V_\varepsilon$ be the corresponding self-adjoint Schr\"odinger operator defined in \eqref{eq:heps-Veps} with the $\frac{s}{2}$-th fractional Laplacian and the shrinking potential $V_\varepsilon$.
\begin{itemize}
 \item[(i)] If $(-\Delta)^{s/2}+V$ is not zero-energy resonant, then $h_\varepsilon^{(s/2)}\xrightarrow[]{\;\varepsilon\downarrow 0\;}(-\Delta)^{s/2}$ in the norm-resolvent sense on $L^2(\mathbb{R}^3)$.
 \item[(ii)] If $(-\Delta)^{s/2}+V$ admits a \emph{simple} zero-energy resonance $\psi$, then for  
 \[
  \alpha\;:=\;-\eta_s\, \Big|\!\int_{\mathbb{R}^3}\ud x\,V(x)\psi(x)\,\Big|^{-2}
 \]
 one has $h_\varepsilon^{(s/2)}\xrightarrow[]{\;\varepsilon\downarrow 0\;}\mathsf{k}^{(s/2)}_\alpha$ in the norm-resolvent sense on $L^2(\mathbb{R}^3)$.
\end{itemize}
\end{theorem}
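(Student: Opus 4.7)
The strategy is the Birman--Schwinger / Konno--Kuroda resolvent factorisation combined with a unitary dilation. Writing $V = u\cdot v$ with $v:=|V|^{1/2}$, $u:=|V|^{1/2}\mathrm{sign}(V)$, for $\lambda>0$ large enough one has
\[
(h_\varepsilon^{(s/2)}+\lambda)^{-1} - ((-\Delta)^{s/2}+\lambda)^{-1} = -((-\Delta)^{s/2}+\lambda)^{-1} u_\varepsilon\,(\mathbbm{1}+\tilde B_\varepsilon(\lambda))^{-1}\,v_\varepsilon ((-\Delta)^{s/2}+\lambda)^{-1},
\]
with $\tilde B_\varepsilon(\lambda):=v_\varepsilon((-\Delta)^{s/2}+\lambda)^{-1}u_\varepsilon$. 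Conjugating by the unitary dilation $(U_\varepsilon f)(x):=\varepsilon^{-3/2}f(x/\varepsilon)$, for which $U_\varepsilon^{-1}(-\Delta)^{s/2}U_\varepsilon = \varepsilon^{-s}(-\Delta)^{s/2}$, one finds
\[
U_\varepsilon^{-1}\tilde B_\varepsilon(\lambda)U_\varepsilon = \eta(\varepsilon)\,v\bigl((-\Delta)^{s/2}+\lambda\varepsilon^{s}\bigr)^{-1}u =: \tilde T_\varepsilon(\lambda).
\]
The splitting $\mathsf{G}_{s,\mu}=\Lambda_s|\cdot|^{-(3-s)}+\mathsf{J}_{s,\mu}$, the scaling relation $\mathsf{J}_{s,\mu}(x) = \mu^{3/s-1}\tilde\mathsf{J}_s(\mu^{1/s}x)$ with $\tilde\mathsf{J}_s(0) = (2\pi s\sin(3\pi/s))^{-1}$, and the Rollnick hypothesis $V\in\mathcal R_{s,3}$ produce the Hilbert--Schmidt expansion
\[
\tilde T_\varepsilon(\lambda) = T^* + \varepsilon^{3-s} M_\lambda + o(\varepsilon^{3-s}), \quad T^*:=v(-\Delta)^{-s/2}u,\;\; M_\lambda:=\eta_s T^* + \tfrac{\lambda^{3/s-1}}{2\pi s\sin(3\pi/s)}|v\rangle\langle u|.
\]

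For part (i), the absence of zero-energy resonance together with Theorem \ref{thm:resonance} gives $-1\notin\sigma(T^*)$, so $(\mathbbm{1}+\tilde T_\varepsilon(\lambda))^{-1}$ is uniformly bounded for small $\varepsilon$. The outer factors satisfy
\[
\bigl\|((-\Delta)^{s/2}+\lambda)^{-1}u_\varepsilon\bigr\|_{\mathrm{HS}} = \|\mathsf{G}_{s,\lambda}\|_{L^2}\,\|u_\varepsilon\|_{L^2} = \mathcal O(\varepsilon^{(3-s)/2})
\]
(using $\mathsf{G}_{s,\lambda}\in L^2(\mathbb R^3)$, which holds precisely because $s\in(\tfrac 32,\tfrac 52)$), and analogously on the right, whence the whole Konno--Kuroda correction is $\mathcal O(\varepsilon^{3-s})$ in operator norm and (i) follows. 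For part (ii), simplicity of the resonance gives one-dimensional kernels $\ker(\mathbbm{1}+T^*)=\mathrm{span}\{\chi\}$ and $\ker(\mathbbm{1}+T)=\mathrm{span}\{\phi\}$, with the identifications $\phi = -u\psi$, $\chi = v\psi$ in terms of the resonance function $\psi$, and $\langle\phi,\chi\rangle = -\int V\psi^2\neq 0$ by simplicity. Rayleigh--Schr\"{o}dinger perturbation applied to the simple zero eigenvalue of $\mathbbm{1}+T^*$ then produces
\[
(\mathbbm{1}+\tilde T_\varepsilon(\lambda))^{-1} = \frac{1}{\varepsilon^{3-s}}\,\frac{|\chi\rangle\langle\phi|}{\langle\phi,M_\lambda\chi\rangle} + \mathcal O(1)\quad\text{in }B(L^2).
\]

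To conclude, for $f,g\in L^2$ set $F_\lambda:=((-\Delta)^{s/2}+\lambda)^{-1}f$, $G_\lambda:=((-\Delta)^{s/2}+\lambda)^{-1}g$ (both continuous at $0$ since $s>3/2$, with $F_\lambda(0)=\langle\mathsf{G}_{s,\lambda},f\rangle$ and analogously for $G_\lambda$). A change of variable gives $\varepsilon^{-(3-s)/2}\,U_\varepsilon^{-1}(u_\varepsilon F_\lambda)(y) = \eta(\varepsilon)^{1/2}u(y)F_\lambda(\varepsilon y) \to F_\lambda(0)u$ in $L^2$, and similarly for $v_\varepsilon G_\lambda$. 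Inserted into the Konno--Kuroda formula, the two $\varepsilon^{(3-s)/2}$ decay factors exactly cancel the $\varepsilon^{-(3-s)}$ pole of the singular part of $(\mathbbm{1}+\tilde T_\varepsilon(\lambda))^{-1}$, while its $\mathcal O(1)$ part contributes only $\mathcal O(\varepsilon^{3-s})$; the leading rank-one limit is
\[
-\frac{\langle u,\chi\rangle\,\langle\phi,v\rangle}{\langle\phi,M_\lambda\chi\rangle}\,|\mathsf{G}_{s,\lambda}\rangle\langle\mathsf{G}_{s,\lambda}|,
\]
and, using $\langle u,\chi\rangle=\int V\psi$, $\langle\phi,v\rangle=-\int V\psi$ from Theorem \ref{thm:resonance}(iii) together with $\langle\phi,T^*\chi\rangle=-\langle\phi,\chi\rangle$, the prefactor is computed to equal $\bigl(\alpha - \tfrac{\lambda^{3/s-1}}{2\pi s\sin(3\pi/s)}\bigr)^{-1}$ for exactly the $\alpha$ in the statement; comparison with the resolvent formula of Theorem \ref{thm:Kalpha}(iii) concludes $h_\varepsilon^{(s/2)}\to\mathsf{k}_\alpha^{(s/2)}$ in the norm-resolvent sense. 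The hardest point will be upgrading these essentially pointwise/weak asymptotics to genuine operator-norm convergence: this requires quantitative Hilbert--Schmidt estimates on the remainder $\tilde T_\varepsilon(\lambda)-T^*-\varepsilon^{3-s}M_\lambda$, uniform in $\lambda$ on a right half-plane, together with a uniform bound on the reduced resolvent of $\mathbbm{1}+T^*$ transverse to $\mathrm{span}\{\chi\}$, so that the $\mathcal O(\varepsilon^{3-s})$ smallness of the outer rescaled potentials compensates for the $\varepsilon^{-(3-s)}$ singularity of the inverse Birman--Schwinger operator in operator norm rather than merely weakly.
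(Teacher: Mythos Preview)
Your approach is essentially the same as the paper's: Konno--Kuroda factorisation, conjugation by the unitary dilation $U_\varepsilon$, Hilbert--Schmidt expansion of the rescaled Birman--Schwinger operator to order $\varepsilon^{3-s}$, and Kato/Rayleigh--Schr\"odinger perturbation of the simple eigenvalue $-1$ to extract the Laurent pole of the inverse. The paper organises the outer factors into operators $A_\varepsilon^{(s)}$, $C_\varepsilon^{(s)}$ with explicit Hilbert--Schmidt limits $|\mathsf{G}_{s,\lambda}\rangle\langle v|$ and $|u\rangle\langle\mathsf{G}_{s,\lambda}|$, which is your $u_\varepsilon F_\lambda\to F_\lambda(0)u$ step recast in operator form; and it handles the upgrade to norm convergence that you flag as ``the hardest point'' by the standard device that weak convergence plus convergence of Hilbert--Schmidt norms implies Hilbert--Schmidt convergence.

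There is, however, a genuine gap in your treatment of part (i). You write that ``the absence of zero-energy resonance together with Theorem~\ref{thm:resonance} gives $-1\notin\sigma(T^*)$'', but this is not what Theorem~\ref{thm:resonance} says. Absence of a resonance means only that \emph{either} $-1$ is not an eigenvalue of $u(-\Delta)^{-s/2}v$, \emph{or} it is an eigenvalue but the associated $\psi$ lies in $L^2(\mathbb{R}^3)$ (a zero-energy bound state). In the second subcase your uniform-boundedness argument for $(\mathbbm{1}+\tilde T_\varepsilon(\lambda))^{-1}$ fails: the inverse still has a pole of order $\varepsilon^{-(3-s)}$, exactly as in part (ii). The paper closes this gap by running the full part-(ii) computation also in this subcase and then invoking Theorem~\ref{thm:resonance}(iv) to observe that $\langle v,\phi\rangle_{L^2}=0$, which kills the rank-one coefficient $|\langle v,\phi\rangle|^2$ in the limit and again yields the trivial resolvent. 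You need to insert this step: the $\mathcal{O}(\varepsilon^{3-s})$ bound you quote for the correction term is only valid when $-1$ is genuinely absent from the spectrum of $T^*$.
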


In view of the discussion we made in the introductory Section, the two possible alternatives in Theorem \ref{thm:shrinking3D} are the manifestation of the locally singular, resonant-driven nature of the limit: the limit is well-posed for a generic class of potentials $V$, but it is non-trivial only if additionally $(-\Delta)^{s/2}+V$ is zero-energy resonant.

By a simple scaling argument one sees that $(-\Delta)^{s/2}+V_\varepsilon$ remains zero-energy resonant for any $\varepsilon>0$ if the scaling is `purely geometric', namely with trivial distortion factor, $\eta(\varepsilon)\equiv 1$. In this case, the signature of the resonance is particularly transparent: as stated in Theorem \ref{thm:shrinking3D}(ii), the limit $\varepsilon\downarrow 0$ with $\eta(\varepsilon)\equiv 1$ produces the extension parametrised by $\alpha=0$ and we see from Theorem \ref{thm:Kalpha}(iv) that the negative eigenvalue of $\mathsf{k}^{(s/2)}_\alpha$ when $\alpha<0$ converges to 0 as $\alpha\uparrow 0$, with the corresponding eigenfunction $\mathsf{G}_{s,\lambda=|E_\alpha^{(s)}|}$ converging pointwise to $\mathsf{G}_{s,0}(x)=\frac{\Lambda_s}{\;|x|^{(3-s)}}$ (see \eqref{eq:sfGlambda_asympt}-\eqref{eq:sfGlambda_asympt-2} and \eqref{eq:KalphanegEV} above); the $L^2_{\mathrm{loc}}\!\setminus\! L^2$-function $\mathsf{G}_{s,0}$ can be actually regarded as a \emph{zero-energy resonance} for $\mathsf{k}^{(s/2)}_{\alpha=0}$ (the local square-integrability following from $s\in(\frac{3}{2},\frac{5}{2})$).

\section{Approximation scheme in dimension one}\label{sec:1D}

In this Section we consider the singular perturbations of the one-dimensional fractional Laplacian and their approximation by means of fractional Schr\"odinger operators with shrinking potentials.

As for the 3D case, for $\lambda>0$ and $s\geqslant 0$, we set
\begin{equation}\label{eq:sfG1D}
  \mathsf{G}_{s,\lambda}(x)\;:=\;\frac{1}{\:(2\pi)^{\frac{1}{2}}}\Big(\frac{1}{|p|^{s}+\lambda}\Big)^{\!\vee}(x)\,,\qquad x,p\in\mathbb{R}
\end{equation}
(whence $((-\Delta)^{s/2}+\lambda) \,\mathsf{G}_{s,\lambda}=\delta(x)$ distributionally), and
\begin{equation}
 \mathring{\mathsf{k}}^{(s/2)}\;:=\;\overline{(-\Delta)^{s/2}\upharpoonright C^\infty_0(\mathbb{R}\setminus \{0\})}
\end{equation}
as an operator closure with respect to the Hilbert space $L^2(\mathbb{R})$. $\mathring{\mathsf{k}}^{(s/2)}$ has deficiency index $n\in\mathbb{N}$ when $s\in I_n^{(1)}:=(n-\frac{1}{2},n+\frac{1}{2})$ (see \eqref{eq:rule} above), and in the case of deficiency index 1 one has
\begin{equation}\label{eq:domkrings}
 \mathcal{D}(\mathring{\mathsf{k}}^{(s/2)})\;=\;\Big\{f\in H^s(\mathbb{R})\,\Big|\!\int_{\mathbb{R}}\widehat{f}(p)\,\ud p=0\Big\}\,, \qquad s\in\textstyle(\frac{1}{2},\frac{3}{2})
\end{equation}
(which can be seen by means of a completely analogous argument to that of \cite[Appendix A]{MOS-2018-fractional-and-perturbation}). The corresponding one-(real-)parameter family of self-adjoint extensions is given by the one-dimensional analogous of Theorem \ref{thm:Kalpha} \cite{MOS-2018-fractional-and-perturbation}.

\begin{theorem}\label{thm:Kalpha1D} Let $s\in(\frac{1}{2},\frac{3}{2})$ and
\begin{equation}\label{eq:Theta1D}
\Theta(s,\lambda)\;:=\;
\begin{cases}
{\textstyle \big( \lambda^{1-\frac1s}s\sin{(\frac{\pi}{s})}\big)^{-1}}&s\neq 1\\
\quad -\frac{1}{\pi}\ln\lambda&s=1\,,
\end{cases}\qquad\qquad \lambda>0\,.
\end{equation}
\begin{itemize}
 \item[(i)] The self-adjoint extensions in $L^2(\mathbb{R})$ of the operator $\mathring{\mathsf{k}}^{(s/2)}$ form the family $(\mathsf{k}^{(s/2)}_\alpha)_{\alpha\in\mathbb{R}\cup\{\infty\}}$, where $\mathsf{k}^{(s/2)}_\infty$ is its Friedrichs extension, namely the self-adjoint fractional Laplacian $(-\Delta)^{s/2}$, and all other (proper) extensions are given, for arbitrary $\lambda>0$, by
  \begin{equation}\label{eq:domKwithalpha1D}
 \begin{split}
  \mathcal{D}(\mathsf{k}^{(s/2)}_\alpha)\;&=\;\left\{\!\!
  \begin{array}{l}
   g=F^\lambda+{\displaystyle\frac{F^\lambda(0)}{\,\alpha-\Theta(s,\lambda)}}\,\mathsf{G}_{s,\lambda} \\
   \qquad F^\lambda\in H^s(\mathbb{R})
  \end{array}\!\!\right\} \\
  (\mathsf{k}^{(s/2)}_\alpha+\lambda)\,g\;&=\;((-\Delta)^{s/2}+\lambda)\,F^\lambda\,.
 \end{split}
\end{equation}
  \item[(ii)] For  each $\alpha\in\mathbb{R}$ the quadratic form of the extension $\mathsf{k}^{(s/2)}_\alpha$ is given by
\begin{eqnarray}
   \mathcal{D}[\mathsf{k}^{(s/2)}_\alpha]\!&=&\!\! H^{\frac{s}{2}}(\mathbb{R})\dotplus \mathrm{span}\{\mathsf{G}_{s,\lambda}\}  \label{eq:Kring-alpha_form1_1D}\\
 \qquad \mathsf{k}^{(s/2)}_\alpha[F^\lambda+\kappa_\lambda \mathsf{G}_{s,\lambda}]\!\!&=&\!\! \||\nabla|^s F^\lambda\|_{L^2(\mathbb{R})}^2-\lambda\|F^\lambda+\kappa_\lambda \mathsf{G}_{s,\lambda}\|_{L^2(\mathbb{R})}^2 \nonumber \\
 & & \!\!\!\!\!\!\!\!\!\!\!\!\!\!\!\!\!\!\!\!\!\!\!\!+\,\lambda\|F^\lambda\|_{L^2(\mathbb{R})}^2  +\big(\alpha-\Theta(s,\lambda)\big)|\kappa_\lambda|^2 \label{eq:Kring-alpha_form2_1D}
 \end{eqnarray}
 for arbitrary $\lambda>0$.
  \item[(iii)] The resolvent of $\mathsf{k}^{(s/2)}_\alpha$ is given by
  \begin{equation}\label{eq:Kalpha_res_1D}
   \begin{split}
      (\mathsf{k}^{(s/2)}_\alpha+\lambda\mathbbm{1})^{-1}\;&=\;((-\Delta)^{s/2}+\lambda\mathbbm{1})^{-1} \\
      &\quad +\big(\alpha-\Theta(s,\lambda)\big)^{\!-1}\, |\mathsf{G}_{s,\lambda}\rangle\langle \mathsf{G}_{s,\lambda}|
   \end{split}
  \end{equation}
  for arbitrary $\lambda>0$.
 \item[(iv)] For each $\alpha\in\mathbb{R}$ the extension $\mathsf{k}^{(s/2)}_\alpha$ is semi-bounded from below, and
 \begin{equation}\label{eq:spec-kalpha_1D}
\sigma_{\mathrm{ess}}(\mathsf{k}^{(s/2)}_\alpha)\;=\;\sigma_{\mathrm{ac}}(\mathsf{k}^{(s/2)}_\alpha)\;=  \;[0,+\infty)\,,\qquad \sigma_{\mathrm{sc}}(\mathsf{k}^{(s/2)}_\alpha)\;=\;\emptyset\,, 
\end{equation}
\begin{equation}\label{eq:spec-kalpha_mar1D}
\sigma_\mathrm{disc}(\mathsf{k}^{(s/2)}_\alpha)\;=\;
   \begin{cases}
    \quad \emptyset & \textrm{ if } s\neq 1,\,(s-1)\,\alpha\leqslant 0 \\
    \{-E_\alpha^{(s)}\} & \textrm{ if } s\neq 1,\, (s-1)\,\alpha> 0 \\
\{-E_\alpha^{(1)}\} & \textrm{ if } s=1\,,

   \end{cases}
\end{equation}
 where the eigenvalue $-E_\alpha^{(s)}$ is non-degenerate and is given by
 \begin{equation}\label{eq:KalphanegEV_1D}
E_\alpha^{(s)}\;=
\begin{cases}
\big(\alpha s\sin({\textstyle\frac{\pi}{s})}\big)^{\frac{s}{1-s}}&s\neq 1\,\\
\qquad e^{-\pi\alpha}&s=1\, ,
\end{cases}
\end{equation}
 the (non-normalised) eigenfunction being $\mathsf{G}_{s,\lambda=|E_\alpha^{(s)}|}$.
\end{itemize}
\end{theorem}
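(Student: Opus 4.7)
The plan is to adapt the Kre{\u\i}n-Vi{\v s}ik-Birman extension-theoretic strategy employed in the three-dimensional case (Theorem \ref{thm:Kalpha} of \cite{MOS-2018-fractional-and-perturbation}); nothing in that scheme is intrinsically three-dimensional, and each step admits a direct one-dimensional counterpart. First I would establish the adjoint structure: via a Fourier-side argument modelled on \cite[Appendix A]{MOS-2018-fractional-and-perturbation} one identifies $\mathcal{D}(\mathring{\mathsf{k}}^{(s/2)})$ with the zero-mean $H^s$-subspace \eqref{eq:domkrings}, whence $\ker((\mathring{\mathsf{k}}^{(s/2)})^*+\lambda) = \mathrm{span}\{\mathsf{G}_{s,\lambda}\}$ is one-dimensional (note $\mathsf{G}_{s,\lambda}\in L^2(\mathbb{R})$ precisely because $s>\frac{1}{2}$). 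Von Neumann's theorem then furnishes a one-real-parameter family of self-adjoint extensions, with the Friedrichs element being the unperturbed $(-\Delta)^{s/2}$. Writing every $g\in\mathcal{D}((\mathring{\mathsf{k}}^{(s/2)})^*)$ as $g = F^\lambda + c\,\mathsf{G}_{s,\lambda}$ with $F^\lambda \in H^s(\mathbb{R})$ and $c \in \mathbb{C}$, each proper extension is singled out by one linear boundary condition of the form $c(\alpha - \Theta(s,\lambda)) = F^\lambda(0)$; the auxiliary $\Theta(s,\lambda)$ is forced to be the Kre{\u\i}n $Q$-function associated to the defect vector $\mathsf{G}_{s,\lambda}$ in order that the parameter $\alpha$ be independent of the auxiliary $\lambda$.

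Next I would compute $\Theta(s,\lambda)$ explicitly. For $s>1$ the Green function is continuous at the origin and $\Theta(s,\lambda)$ is nothing but $\mathsf{G}_{s,\lambda}(0)$, evaluated through a one-dimensional Mellin-Beta integral to give the first branch of \eqref{eq:Theta1D}. For $\frac{1}{2}<s<1$ the Green function develops a local singularity $|x|^{-(1-s)}$ at $0$ (a one-dimensional analog of the expansion \eqref{eq:sfGlambda_asympt}-\eqref{eq:sfGlambda_asympt-2}), and $\Theta$ must be interpreted as the finite part of $\mathsf{G}_{s,\lambda}$ at the origin after subtraction of that singular term; the same analytic formula $\lambda^{1/s-1}/(s\sin(\pi/s))$ persists, now with $\sin(\pi/s)<0$. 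At the threshold $s=1$ the Mellin prescription degenerates and a direct residue/limit computation on $(2\pi)^{-1}\!\int(|p|+\lambda)^{-1}\,dp$ produces the logarithmic branch $-\pi^{-1}\ln\lambda$. The $\lambda$-independence of $\alpha$ is then confirmed through the standard Kre{\u\i}n-type identity for the $Q$-function, $\Theta(s,\lambda_1) - \Theta(s,\lambda_2) = (\lambda_2-\lambda_1)\langle\mathsf{G}_{s,\lambda_1},\mathsf{G}_{s,\lambda_2}\rangle_{L^2}$.

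Parts (ii)-(iii) then follow mechanically. The resolvent formula \eqref{eq:Kalpha_res_1D} is read off by inverting $(\mathsf{k}^{(s/2)}_\alpha+\lambda)g = ((-\Delta)^{s/2}+\lambda)F^\lambda$ and recognising a rank-one correction to the free resolvent with coefficient $(\alpha-\Theta(s,\lambda))^{-1}$; the quadratic form \eqref{eq:Kring-alpha_form2_1D} is obtained by polarisation, together with the direct boundary computation $\mathsf{k}^{(s/2)}_\alpha[\mathsf{G}_{s,\lambda}] = \alpha - \Theta(s,\lambda) - \lambda\|\mathsf{G}_{s,\lambda}\|_{L^2}^2$. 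For (iv), Weyl's theorem applied to the rank-one resolvent difference preserves the essential spectrum of $(-\Delta)^{s/2}$ and its absolute continuity. The discrete spectrum is the zero set in $\lambda>0$ of $\alpha - \Theta(s,\lambda)$: for $s\neq 1$, strict monotonicity of $\lambda\mapsto\Theta(s,\lambda)$ combined with $\mathrm{sgn}\,\Theta = \mathrm{sgn}(s-1)$ on $(0,\infty)$ yields a unique solution iff $(s-1)\alpha>0$, producing \eqref{eq:KalphanegEV_1D}; for $s=1$, $\Theta(1,\cdot)$ is a bijection of $(0,\infty)$ onto $\mathbb{R}$, so a unique bound state exists for every $\alpha\in\mathbb{R}$.

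The main obstacle I anticipate is the delicate Fourier-analytic evaluation of $\Theta(s,\lambda)$ near the transition $s=1$: the power-law expression $\lambda^{1/s-1}/(s\sin(\pi/s))$ develops a simple pole at $s=1$ whose residue must be carefully matched against the emerging logarithm through a controlled limiting procedure. A parallel delicate point is the rigorous verification, for $s\in(\frac{1}{2},1)$, that $\Theta$ is indeed the regularised value of $\mathsf{G}_{s,\lambda}$ at $0$ after the $|x|^{-(1-s)}$ subtraction, which requires a pointwise asymptotic expansion of $\mathsf{G}_{s,\lambda}$ analogous to \eqref{eq:sfGlambda_asympt}-\eqref{eq:sfGlambda_asympt-2} but in dimension one.
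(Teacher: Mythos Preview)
Your proposal is correct and follows precisely the approach the paper indicates: the paper does not prove Theorem~\ref{thm:Kalpha1D} in detail but simply states that it is ``the one-dimensional analogous of Theorem~\ref{thm:Kalpha}'' and cites \cite{MOS-2018-fractional-and-perturbation}, so your plan to transplant the Kre{\u\i}n--Vi{\v s}ik--Birman scheme from $d=3$ to $d=1$ step by step is exactly what is intended. Your identification of the additional subtleties in $d=1$ (the three sub-regimes $s<1$, $s=1$, $s>1$ for the regularised evaluation of $\mathsf{G}_{s,\lambda}$ at the origin that produces $\Theta(s,\lambda)$) is the only genuinely new ingredient beyond the $3$D argument, and your outline of how to handle each of them is sound.
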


Our goal is to qualify each of the extensions given by Theorem \ref{thm:Kalpha1D} as suitable limits of approximating fractional Schr\"{o}dinger operators with finite range potentials. Unlike the 3D setting, here the regime $s\in(\frac{1}{2},\frac{3}{2})$ is separated by the transition value $s=1$, below which we are in the \emph{locally singular case} for the Green function \eqref{eq:sfG1D}, and above which we are in the \emph{locally regular case}, in the terminology of Section \ref{intro}. This will result in different assumptions on the approximating potentials and different schemes for the resolvent limit.

We therefore proceed by splitting our discussion into the two above-mentioned cases.

\subsection{Locally singular, resonance-driven case}~

This is the regime $s\in(\frac{1}{2},1)$. The Green function $\mathsf{G}_{s,\lambda}$ has a local singularity,
\begin{equation}\label{eq:sfGlambda_asympt1D}
 \mathsf{G}_{s,\lambda}(x)\;=\;\frac{\Lambda_s}{\;|x|^{(1-s)}}+\mathsf{J}_{s,\lambda}(x) 
\end{equation}
with
\begin{equation}\label{eq:sfGlambda_asympt-2-1D}
 \begin{split}
  \Lambda_s\;&:=\;\frac{\Gamma(\frac{1-s}{2})}{\,(2\pi)^{\frac{1}{2}}\,2^{s-\frac{1}{2}}\,\Gamma(\frac{s}{2})} \\
  \mathsf{J}_{s,\lambda}\;&:=\;-\frac{\lambda}{\:(2\pi)^{\frac{1}{2}}}\Big(\frac{1}{|p|^s(|p|^{s}+\lambda)}\Big)^{\!\vee}\;\in\;C_\infty(\mathbb{R})\,.
 \end{split}
\end{equation}
We make the following assumption (the class ${R}_{s,d}$ was introduced in \eqref{eq:Rclass}).

\medskip

\textbf{Assumption (I$^-_s$).} $s\in(\frac{1}{2},1)$ and moreover:
\begin{itemize}
 \item[(i)] $V:\mathbb{R}\to\mathbb{R}$ is a measurable function in $L^1(\mathbb{R},\langle x\rangle^{2s-1}\ud x)\cap\mathcal{R}_{s,1}$;
 \item[(ii)] $\eta:\mathbb{R}\to\mathbb{R}^+$ is a continuous function satisfying $\eta(0)=\eta(1)=1$ and
 \[
  \eta(\varepsilon)\;=\;1+\eta_s\,\varepsilon^{1-s}+o(\varepsilon^{1-s})\qquad\textrm{as }\varepsilon\downarrow 0
 \]
 for some $\eta_s\in\mathbb{R}$ that we call the \emph{strength} of the \emph{distortion factor} $\eta$.
\end{itemize}

\medskip

For given $V$ and $\eta$ satisfying Assumption ($\mathrm{I}^-_s$), let us set 
\begin{equation}\label{eq:heps-Veps1D}
 h_\varepsilon^{(s/2)}\;:=\;(-\Delta)^{s/2}+V_\varepsilon\,,\qquad V_\varepsilon(x)\;:=\;\frac{\,\eta(\varepsilon)}{\:\varepsilon^s}\,V(\textstyle\frac{x}{\varepsilon})\,,\qquad\varepsilon>0\,.
\end{equation}
For every $\varepsilon>0$ the operator $h_\varepsilon^{(s/2)}$, defined as a form sum, is self-adjoint on $L^2(\mathbb{R}^3)$ and $\sigma_{\mathrm{ess}}(h_\varepsilon^{(s/2)})=[0,+\infty)$ (Lemma \ref{lem:compactness1D}(iii)).

The zero-energy spectral behaviour of $(-\Delta)^{s/2}+V$, which is crucial for the limit $\varepsilon\downarrow 0$ in $h_\varepsilon^{(s/2)}$, is described as follows, in analogy with Theorem \ref{thm:resonance}.

\begin{theorem}\label{thm:resonance1D}
 Let $s\in(\frac{1}{2},1)$, $V\in L^1(\mathbb{R},\langle x\rangle^{2s-1}\ud x)\cap\mathcal{R}_{s,1}$, real-valued. Let $v:=|V|^{\frac{1}{2}}$ and $u:=|V|^{\frac{1}{2}}\mathrm{sign}(V)$.
 \begin{itemize}
  \item[(i)] The operator $u(-\Delta)^{-\frac{s}{2}}v$ is compact on $L^2(\mathbb{R})$.
 \end{itemize}
 Assume in addition that 
 \begin{equation}\label{eq:phi1D}
  u(-\Delta)^{-\frac{s}{2}}v\,\phi\;=\;-\phi\qquad\textrm{for some } \phi\in L^2(\mathbb{R})\setminus \{0\}
 \end{equation}
 and define
 \begin{equation}\label{eq:psi1D}
  \psi\;:=\;(-\Delta)^{-\frac{s}{2}}v\,\phi\,.
 \end{equation}
 Then:
 \begin{itemize}
  \item[(ii)] $\psi\in L^2_{\mathrm{loc}}(\mathbb{R})$ and  $\big((-\Delta)^{s/2}+V\big)\psi=0$ in the sense of distributions,
  \item[(iii)] $\langle v,\phi\rangle_{L^2}\;=\;-\displaystyle\int_{\mathbb{R}}\ud x\,V(x)\psi(x)$,
  \item[(iv)] $\psi\in L^2(\mathbb{R})$ $\Leftrightarrow$ $\langle v,\phi\rangle_{L^2}=0$, in which case $\psi\in\mathcal{D}((-\Delta)^{s/2}+V)$.
 \end{itemize}
\end{theorem}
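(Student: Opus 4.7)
The plan is to follow the four-part structure of the three-dimensional Theorem~\ref{thm:resonance}, systematically replacing the 3D kernel $c_s|x-y|^{-(3-s)}$ by its one-dimensional counterpart $c_s|x-y|^{s-1}$, which is locally square-integrable because $1-s<\frac{1}{2}$ for $s\in(\frac{1}{2},1)$.

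For (i), I will write $u(-\Delta)^{-s/2}v$ explicitly as the integral operator with kernel $c_s\, u(x)|x-y|^{s-1}v(y)$ and compute its Hilbert--Schmidt norm squared, which by definition \eqref{eq:Rclass} equals $c_s^2\|V\|_{\mathcal{R}_{s,1}}^2$, finite by assumption; compactness is then immediate. For (ii), Cauchy--Schwarz yields $v\phi\in L^1(\mathbb{R})$, so $\psi(x)=c_s\!\int|x-y|^{s-1}v(y)\phi(y)\,\ud y$ is well-defined pointwise a.e. Local square-integrability of $\psi$ will be obtained by splitting the $y$-integral into $|y|\leq R$ (handled via Young's inequality $L^2_{\mathrm{loc}}*L^1\subset L^2_{\mathrm{loc}}$, using $|x|^{s-1}\in L^2_{\mathrm{loc}}(\mathbb{R})$ for $s>\frac{1}{2}$) and $|y|>R$ (where the kernel is uniformly bounded as $x$ ranges over a fixed compact). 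For the distributional identity, I pair against $\chi\in C_c^\infty(\mathbb{R})$: Parseval gives $\int\psi\,(-\Delta)^{s/2}\chi\,\ud x=\int v\phi\,\chi\,\ud x$, while multiplying $\phi=-u\psi$ by $v$ produces the pointwise identity $V\psi=-v\phi$ (since $V=uv$), so the two contributions to $\int((-\Delta)^{s/2}\psi+V\psi)\chi\,\ud x$ cancel. Integrating this same $L^1$ identity $V\psi=-v\phi$ over $\mathbb{R}$ then yields (iii).

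For (iv), the forward direction is clean: if $\psi\in L^2$, then $\widehat\psi\in L^2$, and from $\widehat\psi(p)=|p|^{-s}\widehat{v\phi}(p)$ together with continuity of $\widehat{v\phi}$ (consequence of $v\phi\in L^1$), the non-integrability of $|p|^{-2s}$ at zero for $s>\frac{1}{2}$ forces $\widehat{v\phi}(0)=0$, equivalently $\langle v,\phi\rangle=0$. For the converse, which is the delicate direction, I plan to invoke the symmetrized Plancherel-type identity
\[
 \int_{\mathbb{R}}|p|^{-2s}|\widehat{v\phi}(p)|^{2}\,\ud p\;=\;-\,c'_s\!\iint_{\mathbb{R}^2}\!|y-y'|^{2s-1}v(y)\phi(y)\,v(y')\phi(y')\,\ud y\,\ud y',
\]
which is valid precisely when $\int v\phi=0$: the constant-order contributions $|y|^{2s-1}+|y'|^{2s-1}$ coming from the distributional Fourier transform of $|y|^{2s-1}$ (for $2s-1\in(0,1)$) factor as $(\int|y|^{2s-1}v\phi)\cdot(\int v\phi)$ and drop out thanks to the zero-mean condition. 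The remaining double integral is then bounded by combining the subadditivity $|y-y'|^{2s-1}\leq|y|^{2s-1}+|y'|^{2s-1}$, Cauchy--Schwarz, and the moment assumption $V\in L^1(\mathbb{R},\langle y\rangle^{2s-1}\ud y)$, again invoking the cancellation $\int v\phi=0$ to handle the borderline terms. Once $\psi\in L^2$ is established, the membership $\psi\in\mathcal{D}((-\Delta)^{s/2}+V)$ follows at once from (ii) and the form-sum characterization of that domain.

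The main technical obstacle is precisely this backward direction of (iv): the Plancherel reduction converts $\psi\in L^2$ into control of a hypersingular double integral against the \emph{growing} kernel $|y-y'|^{2s-1}$, and a naive Cauchy--Schwarz estimate would require a strictly higher moment of $V$ than what the assumption $V\in L^1(\langle y\rangle^{2s-1}\ud y)$ provides; one must genuinely exploit the zero-mean condition $\int v\phi=0$ to subtract the divergent leading-order pieces before the remaining integral can be closed.
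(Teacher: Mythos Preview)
Your treatment of (i), (ii), (iii), and the forward direction of (iv) is sound and close in spirit to the paper. The real issue is the backward direction of (iv), which you correctly flag as the crux but whose proposed resolution does not close.

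Once you have passed to the double integral with kernel $K(y,y')=|y-y'|^{2s-1}-|y|^{2s-1}-|y'|^{2s-1}$ and taken absolute values, there is \emph{no further cancellation} to invoke: the zero-mean condition $\int v\phi=0$ has already been fully spent in justifying the subtraction of the two ``constant-order'' pieces. Your subadditivity bound then gives only $|K(y,y')|\leq 2(|y|^{2s-1}+|y'|^{2s-1})$, and the resulting estimate on $\iint|K|\,|v\phi|\,|v\phi|$ requires $\int|y|^{2s-1}|v\phi|<\infty$, which by Cauchy--Schwarz demands the moment $\int|y|^{2(2s-1)}|V|<\infty$ --- strictly more than the hypothesis $V\in L^1(\langle y\rangle^{2s-1}\,\ud y)$ provides. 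The gap is real: what is actually true (and what would rescue your argument) is the sharper pointwise bound $|K(y,y')|\lesssim\min(|y|,|y'|)^{2s-1}$, after which $\min^{2s-1}\leq|y|^{(2s-1)/2}|y'|^{(2s-1)/2}$ and Cauchy--Schwarz close with exactly the available moment. But this bound is not a consequence of subadditivity; it needs a separate mean-value/concavity argument.

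The paper sidesteps this difficulty entirely by working in physical space from the outset. It writes, \emph{unconditionally},
\[
 \psi(x)\;=\;\Lambda_s\,\langle v,\phi\rangle\,|x|^{-(1-s)}+\psi_1(x),\qquad
 \psi_1(x)=\Lambda_s\!\int_{\mathbb{R}}\!\big(|x-y|^{-(1-s)}-|x|^{-(1-s)}\big)v(y)\phi(y)\,\ud y,
\]
and proves $\psi_1\in L^2(\mathbb{R})$ via the single kernel estimate
\[
 \int_{\mathbb{R}}\big(|x-y|^{-(1-s)}-|x|^{-(1-s)}\big)^2\,\ud x\;\lesssim\;|y|^{2s-1},
\]
followed by Cauchy--Schwarz in $y$, which uses \emph{precisely} the moment $\int\langle y\rangle^{2s-1}|V|<\infty$. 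This one decomposition does double duty: it gives $\psi\in L^2_{\mathrm{loc}}$ for (ii) (since $|x|^{-(1-s)}\in L^2_{\mathrm{loc}}$ when $s>\tfrac12$) and it makes (iv) immediate in \emph{both} directions, because $|x|^{-(1-s)}\in L^2_{\mathrm{loc}}(\mathbb{R})\setminus L^2(\mathbb{R})$. Your more elementary route to $L^2_{\mathrm{loc}}$ in (ii) is correct but forfeits this structure, which is why you are then forced into the delicate Fourier computation for (iv).
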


We defer to Section \ref{sec:zero_en_res} the proof of Theorem \ref{thm:resonance1D} and a discussion of the occurrence of a zero-energy resonance for $(-\Delta)^{s/2}+V$ (Proposition \ref{prop:exist_res_1D}). With the same terminology of Section \ref{sec:3D}, $(-\Delta)^{s/2}+V$ is \emph{zero-energy resonant} and that $\psi$ is a \emph{zero-energy resonance} for $(-\Delta)^{s/2}+V$ when there exists a non-zero $L^2$-function $\phi$ satisfying \eqref{eq:phi1D} and the corresponding function $\psi$ defined by \eqref{eq:psi1D} belongs to $L^2_{\mathrm{loc}}(\mathbb{R})\setminus L^2(\mathbb{R}^)$. If, for the zero-energy resonant operator $(-\Delta)^{s/2}+V$, the eigenvalue $-1$ of $u(-\Delta)^{-\frac{s}{2}}v$ is non-degenerate, then the resonance is \emph{simple}.

Here below is our first main result in dimension one, relative to the resonance-driven regime.

\begin{theorem}\label{thm:shrinking1D}
 Let $s\in(\frac{1}{2},1)$. Given a potential $V$ and a distortion factor $\eta$ with strength $\eta_s$ satisfying Assumption \emph{($\mathrm{I}^-_s$)}, for every $\varepsilon>0$ let $h_\varepsilon^{(s/2)}=(-\Delta)^{s/2}+V_\varepsilon$ be the corresponding self-adjoint Schr\"odinger operator defined in \eqref{eq:heps-Veps1D} with the $\frac{s}{2}$-th fractional Laplacian and the shrinking potential $V_\varepsilon$.
\begin{itemize}
 \item[(i)] If $(-\Delta)^{s/2}+V$ is not zero-energy resonant, then $h_\varepsilon^{(s/2)}\xrightarrow[]{\;\varepsilon\downarrow 0\;}(-\Delta)^{s/2}$ in the norm-resolvent sense on $L^2(\mathbb{R})$.
 \item[(ii)] If $(-\Delta)^{s/2}+V$ admits a \emph{simple} zero-energy resonance $\psi$, then for  
 \[
  \alpha\;:=\;-\eta_s\, \Big|\!\int_{\mathbb{R}}\ud x\,V(x)\psi(x)\,\Big|^{-2}
 \]
 one has $h_\varepsilon^{(s/2)}\xrightarrow[]{\;\varepsilon\downarrow 0\;}\mathsf{k}^{(s/2)}_\alpha$ in the norm-resolvent sense on $L^2(\mathbb{R})$.
\end{itemize}
\end{theorem}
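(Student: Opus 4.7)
The proof follows the Birman--Schwinger scheme combined with a unitary rescaling argument, in complete parallel with Section \ref{sec:proof_approx_3D}. Writing $V_\varepsilon=u_\varepsilon v_\varepsilon$ with $v_\varepsilon:=|V_\varepsilon|^{1/2}$ and $u_\varepsilon:=\mathrm{sign}(V_\varepsilon)|V_\varepsilon|^{1/2}$, and setting $R_\mu:=((-\Delta)^{s/2}+\mu)^{-1}$ for sufficiently large $\mu>0$, I would start from the resolvent identity
\[
(h_\varepsilon^{(s/2)}+\mu)^{-1}\;=\;R_\mu\;-\;R_\mu\,v_\varepsilon\,\bigl(\mathbbm{1}+u_\varepsilon R_\mu v_\varepsilon\bigr)^{-1}u_\varepsilon R_\mu\,.
\]
To analyse the Birman--Schwinger operator I would introduce the unitary dilation $U_\varepsilon f(x):=\varepsilon^{-1/2}f(x/\varepsilon)$ and consider the conjugated operator $B_\varepsilon:=U_\varepsilon^{-1}u_\varepsilon R_\mu v_\varepsilon U_\varepsilon$. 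A direct computation gives it the integral kernel $\eta(\varepsilon)\,u(x)\,\varepsilon^{1-s}\mathsf{G}_{s,\mu}(\varepsilon(x-y))\,v(y)$.

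Using the local-singularity expansion \eqref{eq:sfGlambda_asympt1D}--\eqref{eq:sfGlambda_asympt-2-1D}, the hypothesis $\eta(\varepsilon)=1+\eta_s\varepsilon^{1-s}+o(\varepsilon^{1-s})$, and the fact (obtained by direct integration) that $\mathsf{J}_{s,\mu}$ is continuous at the origin with $\mathsf{J}_{s,\mu}(0)=\Theta(s,\mu)$, I would establish the operator-norm (actually Hilbert--Schmidt, by virtue of $V\in\mathcal{R}_{s,1}$) expansion
\[
B_\varepsilon\;=\;B_0\;+\;\varepsilon^{1-s}\bigl(\eta_s B_0+\Theta(s,\mu)\,|u\rangle\langle v|\bigr)\;+\;o(\varepsilon^{1-s})\,,
\]
with $B_0:=u(-\Delta)^{-s/2}v$ compact by Theorem \ref{thm:resonance1D}(i). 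In the non-resonant case of (i), $\mathbbm{1}+B_0$ is invertible by Fredholm alternative and hence so is $\mathbbm{1}+B_\varepsilon$ uniformly for small $\varepsilon$; the outer factors $R_\mu v_\varepsilon$ and $u_\varepsilon R_\mu$, after conjugation by $U_\varepsilon$, acquire a prefactor $\varepsilon^{(1-s)/2}$ each, so the correction to $R_\mu$ vanishes in norm and $h_\varepsilon^{(s/2)}\to(-\Delta)^{s/2}$.

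In the resonant case of (ii), $-1$ is a simple eigenvalue of $B_0$ with right eigenvector $\phi$ and left eigenvector $\phi^*$ (with $B_0^*\phi^*=-\phi^*$). I would apply the Schur/Feshbach complement formula around the kernel of $\mathbbm{1}+B_0$ to extract from $(\mathbbm{1}+B_\varepsilon)^{-1}$ the singular rank-one piece $\lambda_\varepsilon^{-1}|\phi_\varepsilon\rangle\langle\phi^*_\varepsilon|/\langle\phi^*_\varepsilon,\phi_\varepsilon\rangle$, with perturbed eigenvalue
\[
\lambda_\varepsilon\;=\;\varepsilon^{1-s}\,\frac{\langle\phi^*,\,(\eta_s B_0+\Theta(s,\mu)|u\rangle\langle v|)\,\phi\rangle}{\langle\phi^*,\phi\rangle}+o(\varepsilon^{1-s})\,,
\]
leaving a uniformly bounded regular remainder. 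The three scaling prefactors $\varepsilon^{(1-s)/2}\cdot\varepsilon^{-(1-s)}\cdot\varepsilon^{(1-s)/2}$ combine to $\varepsilon^{0}$, and the outer factors $R_\mu v_\varepsilon U_\varepsilon\phi_\varepsilon$ and $U_\varepsilon^{-1}u_\varepsilon R_\mu$ converge pointwise to $\mathsf{G}_{s,\mu}\cdot\langle v,\phi\rangle$ and $\langle u,\phi^*\rangle\,\langle\mathsf{G}_{s,\mu},\cdot\rangle$ respectively, thereby producing a rank-one limit proportional to $|\mathsf{G}_{s,\mu}\rangle\langle\mathsf{G}_{s,\mu}|$. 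The proportionality constant is identified with $(\alpha-\Theta(s,\mu))^{-1}$ after invoking Theorem \ref{thm:resonance1D}(iii), which gives $\int V\psi=-\langle v,\phi\rangle$ and therefore $\alpha=-\eta_s|\!\int V\psi|^{-2}$; matching with the resolvent formula \eqref{eq:Kalpha_res_1D} concludes the convergence to $\mathsf{k}^{(s/2)}_\alpha$.

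The hardest step will be the justification of the leading operator-norm expansion of $B_\varepsilon$ uniformly in the scale of concentration, which requires careful Hilbert--Schmidt control of both the purely geometric residue $\varepsilon^{1-s}\mathsf{J}_{s,\mu}(\varepsilon(x-y))-\Theta(s,\mu)\varepsilon^{1-s}$ and the logarithmic-like corrections carried by the factor $\eta(\varepsilon)-1$; the extraction of the rank-one singular part via Feshbach and the bookkeeping of the pre-factors $\varepsilon^{\pm(1-s)/2}$, $\varepsilon^{\pm(1-s)}$ to produce a finite, non-trivial limit are the other technical cores of the proof. Once this is done, norm-resolvent convergence follows from the uniformity of the estimates in $\mu$ on any compact set of the resolvent set common to all $\mathsf{k}^{(s/2)}_\alpha$.
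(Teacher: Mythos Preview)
Your proposal is correct and follows essentially the same approach as the paper: Konno--Kuroda resolvent identity, unitary rescaling to produce the operators $A_\varepsilon,B_\varepsilon,C_\varepsilon$, Hilbert--Schmidt expansion of $B_\varepsilon$ to order $\varepsilon^{1-s}$, and extraction of the rank-one singular piece of $(\mathbbm{1}+B_\varepsilon)^{-1}$ in the resonant case. The only cosmetic difference is that you phrase the last step as a Feshbach--Schur complement around $\ker(\mathbbm{1}+B_0)$, whereas the paper (following the 3D Lemma~\ref{lem:competing_effect}) inserts an artificial shift and writes $(\mathbbm{1}+B_\varepsilon)^{-1}$ through $(\mathbbm{1}+z\mathbbm{1}+B_0)^{-1}$ with $z\sim\varepsilon^{1-s}$, then invokes Kato's Laurent expansion near the isolated eigenvalue $-1$; both devices yield the same projector $-|\phi\rangle\langle\widetilde{\phi}|$ (your $\phi^*$ is the paper's $\widetilde{\phi}=(\mathrm{sign}\,V)\phi$) and the same leading coefficient, so the two arguments are interchangeable.
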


We shall prove Theorem \ref{thm:shrinking1D} in Section \ref{sec:proof_approx_1D}.

The alternative in Theorem \ref{thm:shrinking1D} is completely analogous to that of Theorem \ref{thm:shrinking3D}, due to the the locally singular, resonant-driven nature of both limits: only for zero-energy resonant operators $(-\Delta)^{s/2}+V$ is the limit non-trivial.

The signature of the resonance is particularly transparent in the absence of distortion factor: when $\eta(\varepsilon)\equiv 1$ by scaling one sees that $(-\Delta)^{s/2}+V_\varepsilon$ remains zero-energy resonant for any $\varepsilon>0$, and we may regard the limit operator $\mathsf{k}^{(s/2)}_{\alpha=0}$ too as zero-energy resonant, for the negative eigenvalue of $\mathsf{k}^{(s/2)}_{\alpha}$ when $|\alpha|\neq 0$ vanishes as $|\alpha|\to 0$ and the corresponding eigenfunctions becomes (proportional to) the $L^2_{\mathrm{loc}}\!\setminus\! L^2$-function $|x|^{-(1-s)}$ (see 
\eqref{eq:KalphanegEV_1D} above).

\subsection{Locally regular, resonance-independent case}~

This is the regime $s\in(1,\frac{3}{2})$. In contrast with the resonance-driven regime, \emph{no spectral requirement} is now needed on the unscaled fractional operator $(-\Delta)^{s/2}+V$ and the scaling in $V_\varepsilon$ is \emph{independent} of $s$. 
Thus, we make the following assumption.

\medskip

\textbf{Assumption (I$^+_s$).}
\begin{itemize}
 \item[(i)] $V:\mathbb{R}\to\mathbb{R}$ is a measurable function in $L^1(\mathbb{R})$.
 \item[(ii)] $\eta:\mathbb{R}^+\to\mathbb{R}^+$ is a smooth function satisfying $\eta(1)=1$.
\end{itemize}

\medskip

Correspondingly, we set 
\begin{equation}\label{eq:heps-Veps1Dplus}
 h_\varepsilon^{(s/2)}\;:=\;(-\Delta)^{s/2}+V_\varepsilon\,,\qquad V_\varepsilon(x)\;:=\;\frac{\,\eta(\varepsilon)}{\:\varepsilon}\,V(\textstyle\frac{x}{\varepsilon})\,,\qquad\varepsilon>0\,.
\end{equation}
For every $\varepsilon>0$ the operator $h_\varepsilon^{(s/2)}$, defined as a form sum, is self-adjoint on $L^2(\mathbb{R}^3)$ and $\sigma_{\mathrm{ess}}(h_\varepsilon^{(s/2)})=[0,+\infty)$ (Lemma \ref{lem:compactness1Dplus}(iii)).

Here below is our second main result in dimension one, which, as opposite to Theorem \ref{thm:shrinking1D}, takes the following form.

\begin{theorem}\label{thm:shrinking1Dplus}
 Let $s\in(1,\frac{3}{2})$. For every $\varepsilon>0$ let $h_\varepsilon^{(s/2)}=(-\Delta)^{s/2}+V_\varepsilon$ be defined according to
 Assumption (I$^+_s$) and \eqref{eq:heps-Veps1Dplus}. Then $h_\varepsilon^{(s/2)}\xrightarrow[]{\;\varepsilon\downarrow 0\;}\mathsf{k}^{(s/2)}_\alpha$ in the norm-resolvent sense on $L^2(\mathbb{R})$, where 
\[
  \alpha\;:=\;-\Big(\eta(0)\!\int_{\mathbb{R}}\ud x\,V(x)\Big)^{\!-1}.
 \]
\end{theorem}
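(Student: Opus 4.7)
The plan is to compare the resolvent of $h_\varepsilon^{(s/2)}$ at a fixed large $\lambda>0$ with the Krein-type formula \eqref{eq:Kalpha_res_1D} of Theorem \ref{thm:Kalpha1D}(iii). The key enabling feature of the locally regular regime $s>1=d$ is that the Green function $\mathsf{G}_{s,\lambda}$ is bounded and continuous on $\mathbb{R}$, with the identity $\mathsf{G}_{s,\lambda}(0)=\Theta(s,\lambda)$ (a one-line Fourier computation via $\int_{0}^{\infty}(q^{s}+1)^{-1}\,\mathrm{d}q=\pi/(s\sin(\pi/s))$). I would start by performing the unitary dilation $(U_\varepsilon f)(x):=\varepsilon^{-1/2}f(x/\varepsilon)$, so that $U_\varepsilon^{*}(-\Delta)^{s/2}U_\varepsilon=\varepsilon^{-s}(-\Delta)^{s/2}$ and $U_\varepsilon^{*}V_\varepsilon U_\varepsilon=\beta_\varepsilon V$ with the \emph{reduced coupling} $\beta_\varepsilon:=\eta(\varepsilon)\varepsilon^{s-1}\to 0$; this vanishing (caused by $s>1$) is precisely what distinguishes the resonance-independent regime from the resonance-driven one and eliminates any spectral requirement on $(-\Delta)^{s/2}+V$. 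Explicitly,
\[
 (h_\varepsilon^{(s/2)}+\lambda)^{-1}=\varepsilon^{s}\,U_\varepsilon\bigl((-\Delta)^{s/2}+\beta_\varepsilon V+\mu_\varepsilon\bigr)^{-1}U_\varepsilon^{*},\qquad \mu_\varepsilon:=\lambda\varepsilon^{s},
\]
and the Konno--Kuroda (Birman--Schwinger) identity expresses the inner resolvent as $R_{0}(\mu_\varepsilon)-\beta_\varepsilon R_{0}(\mu_\varepsilon)v(1+M_\varepsilon)^{-1}uR_{0}(\mu_\varepsilon)$, where $R_{0}(\mu):=((-\Delta)^{s/2}+\mu)^{-1}$, $v:=|V|^{1/2}$, $u:=|V|^{1/2}\mathrm{sign}(V)$ and $M_\varepsilon:=\beta_\varepsilon uR_{0}(\mu_\varepsilon)v$.

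The Green function dilation identity $\mathsf{G}_{s,\mu}(x)=\mu^{1/s-1}\mathsf{G}_{s,1}(\mu^{1/s}x)$ then has two decisive consequences. First, a direct calculation gives $\varepsilon^{s}U_\varepsilon R_{0}(\mu_\varepsilon)U_\varepsilon^{*}=((-\Delta)^{s/2}+\lambda)^{-1}$ \emph{exactly}, reproducing the free-resolvent piece of \eqref{eq:Kalpha_res_1D}. Second, the outer-factor kernels $[U_\varepsilon R_{0}(\mu_\varepsilon)v](x,z)=\varepsilon^{1/2-s}\mathsf{G}_{s,\lambda}(x-\varepsilon z)v(z)$ and $[uR_{0}(\mu_\varepsilon)U_\varepsilon^{*}](z',y)=\varepsilon^{1/2-s}u(z')\mathsf{G}_{s,\lambda}(y-\varepsilon z')$ combine with the prefactor $\varepsilon^{s}\beta_\varepsilon$ to yield precisely $\eta(\varepsilon)$, while the kernel of $M_\varepsilon$ reads $\eta(\varepsilon)\lambda^{1/s-1}u(z)\mathsf{G}_{s,1}(\lambda^{1/s}\varepsilon(z-z'))v(z')$. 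Using that $\mathsf{G}_{s,1}$ is bounded, that $\lambda^{1/s-1}\mathsf{G}_{s,1}(0)=\Theta(s,\lambda)$, and that $V\in L^{1}$ makes $|u(z)v(z')|^{2}=|V(z)||V(z')|$ integrable on $\mathbb{R}^{2}$, dominated convergence gives $M_\varepsilon\to M_{0}:=\eta(0)\Theta(s,\lambda)\,|u\rangle\langle v|$ in Hilbert--Schmidt norm. The Sherman--Morrison rank-one inversion then produces $\langle v,(1+M_{0})^{-1}u\rangle=\int V/(1+\eta(0)\Theta(s,\lambda)\int V)$, and combining this with the uniform limit $\mathsf{G}_{s,\lambda}(x-\varepsilon z)\to\mathsf{G}_{s,\lambda}(x)$ (controlled via $\mathsf{G}_{s,\lambda}\in L^{\infty}\cap C^{0}$ and a tail split for $v\in L^{2}$) identifies the operator-norm limit of the correction term as $-\eta(0)(\int V)(1+\eta(0)\Theta(s,\lambda)\int V)^{-1}|\mathsf{G}_{s,\lambda}\rangle\langle\mathsf{G}_{s,\lambda}|$. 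Setting this equal to the rank-one term $(\alpha-\Theta(s,\lambda))^{-1}|\mathsf{G}_{s,\lambda}\rangle\langle\mathsf{G}_{s,\lambda}|$ of \eqref{eq:Kalpha_res_1D} and solving for $\alpha$, the $\lambda$-dependence cancels \emph{exactly} thanks to $\mathsf{G}_{s,\lambda}(0)=\Theta(s,\lambda)$, producing $\alpha=-(\eta(0)\int V)^{-1}$ as asserted.

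The hard part will be upgrading the various pointwise and Hilbert--Schmidt limits to operator-norm convergence of the whole resolvent difference. Everything reduces to a uniform control of $(1+M_\varepsilon)^{-1}$: for $\lambda$ chosen so large that $|\eta(0)\Theta(s,\lambda)\int V|<\tfrac{1}{2}$ (possible because $\Theta(s,\lambda)\to 0$ as $\lambda\to\infty$ for $s>1$), the HS convergence $M_\varepsilon\to M_{0}$ yields both the uniform boundedness of $(1+M_\varepsilon)^{-1}$ and its operator-norm convergence to $(1+M_{0})^{-1}$. Norm-resolvent convergence at one such $\lambda$ then propagates to the whole common resolvent set by the first resolvent identity. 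A preliminary point --- the self-adjointness of $h_\varepsilon^{(s/2)}$ as a form sum and the uniform-in-$\varepsilon$ inclusion $-\lambda\in\rho(h_\varepsilon^{(s/2)})$ for $\lambda$ large, needed to legitimise the Konno--Kuroda representation --- is already recorded in Lemma \ref{lem:compactness1Dplus}(iii).
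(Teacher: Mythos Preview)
Your proposal is correct and follows essentially the same route as the paper: Konno--Kuroda resolvent identity, unitary dilation by $U_\varepsilon$, Hilbert--Schmidt convergence of the three factors (your $M_\varepsilon$ is the paper's $B_\varepsilon^{(s)}$, and your outer kernels are the paper's $A_\varepsilon^{(s)}$, $C_\varepsilon^{(s)}$), the observation that $\mathsf{G}_{s,\lambda}(0)=\Theta(s,\lambda)$ makes the limit $M_0$ rank-one, and finally a Sherman--Morrison inversion compared against the Kre\u{\i}n formula \eqref{eq:Kalpha_res_1D}. The only cosmetic difference is that the paper excludes the single exceptional $\lambda$ satisfying $1+\eta(0)\Theta(s,\lambda)\int V=0$ (interpreting it as the eigenvalue of the limit operator), whereas you instead pick $\lambda$ large so that $|\eta(0)\Theta(s,\lambda)\int V|<\tfrac{1}{2}$ and then propagate by the first resolvent identity; both are valid ways to secure the uniform bound on $(1+M_\varepsilon)^{-1}$.
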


We shall prove Theorem \ref{thm:shrinking1Dplus} in Section \ref{sec:proof_approx_1D-nores}.

\section{Convergence of the 3D limit}\label{sec:proof_approx_3D}

The goal of this Section is to prove Theorem \ref{thm:shrinking3D}.

Let us start with qualifying the following useful operator-theoretic properties.

\begin{lemma}\label{lem:compactness}
 Let $V:\mathbb{R}^3\to\mathbb{R}$ belong to $L^1(\mathbb{R}^3)\cap\mathcal{R}_{s,3}$ for some $s\in(\frac{3}{2},3)$. Then:
 \begin{itemize}
  \item[(i)] for every $\lambda\geqslant 0$, $|V|^{\frac{1}{2}}((-\Delta)^{\frac{s}{2}}+\lambda\mathbbm{1})^{-1}|V|^{\frac{1}{2}}$ is a Hilbert-Schmidt operator on $L^2(\mathbb{R}^3)$;
  \item[(ii)] $|V|^{\frac{1}{2}}\ll(-\Delta)^{\frac{s}{4}}$ in the sense of infinitesimally bounded operators;
  \item[(iii)] the operator $(-\Delta)^{\frac{s}{2}}+V$ defined as a form sum is self-adjoint and $\sigma_{\mathrm{ess}}((-\Delta)^{\frac{s}{2}}+V)=[0,+\infty)$.
 \end{itemize}
\end{lemma}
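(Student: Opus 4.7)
The plan is to dispatch the three parts in order, with (i) providing the key quantitative estimate and (ii)--(iii) then following by soft operator-theoretic arguments.

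For part (i), I would compute the Hilbert--Schmidt norm directly: the operator $|V|^{\frac{1}{2}}((-\Delta)^{s/2}+\lambda)^{-1}|V|^{\frac{1}{2}}$ has integral kernel $|V(x)|^{\frac{1}{2}}\mathsf{G}_{s,\lambda}(x-y)|V(y)|^{\frac{1}{2}}$, so
$$\bigl\||V|^{\tfrac{1}{2}}((-\Delta)^{s/2}+\lambda)^{-1}|V|^{\tfrac{1}{2}}\bigr\|_{HS}^{2}\;=\;\iint_{\mathbb{R}^{3}\times\mathbb{R}^{3}}|V(x)||V(y)|\,\mathsf{G}_{s,\lambda}(x-y)^{2}\,\ud x\,\ud y.$$
The crucial input is the subordination representation $\mathsf{G}_{s,\lambda}(x)=\int_{0}^{\infty}e^{-t\lambda}P_{t}(x)\,\ud t$, where $P_{t}=\mathcal{F}^{-1}(e^{-t|p|^{s}})\geqslant 0$ is the $\frac{s}{2}$-stable heat kernel; it yields the monotone pointwise bound $0\leqslant \mathsf{G}_{s,\lambda}(x)\leqslant \mathsf{G}_{s,0}(x)=\Lambda_{s}|x|^{-(3-s)}$ (this uses $s\in(\tfrac{3}{2},3)$ to make $\mathsf{G}_{s,0}$ locally square-integrable). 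The double integral is thereby bounded by $\Lambda_{s}^{2}\|V\|_{\mathcal{R}_{s,3}}^{2}<\infty$.

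Part (ii) is then a corollary of (i) combined with the observation that, by the same subordination bound and dominated convergence (with integrable dominant $\Lambda_{s}^{2}|V(x)||V(y)||x-y|^{-2(3-s)}$), the HS norm just computed tends to zero as $\lambda\to\infty$. The standard equivalence between infinitesimal form-boundedness of $|V|$ with respect to $(-\Delta)^{s/2}$ and vanishing of $\|\,|V|^{\frac{1}{2}}((-\Delta)^{s/2}+\lambda)^{-1}|V|^{\frac{1}{2}}\|_{\mathrm{op}}$ as $\lambda\to\infty$ then delivers the claim via the identity $\langle f,|V|f\rangle=\||V|^{\frac{1}{2}}((-\Delta)^{s/2}+\lambda)^{-1/2}g\|^{2}$ with $g=((-\Delta)^{s/2}+\lambda)^{1/2}f$.

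For part (iii), self-adjointness of the form sum follows from the KLMN theorem: $V$ is form-dominated by $|V|$, and the latter is infinitesimally $(-\Delta)^{s/2}$-form bounded thanks to (ii). For the essential spectrum I would verify compactness of the resolvent difference via the Konno--Kuroda identity
$$((-\Delta)^{s/2}+V+\lambda)^{-1}-R_{\lambda}\;=\;-R_{\lambda}\,v\bigl[\mathbbm{1}+u R_{\lambda} v\bigr]^{-1}u\,R_{\lambda},$$
with $R_{\lambda}:=((-\Delta)^{s/2}+\lambda)^{-1}$, $v:=|V|^{\frac{1}{2}}$, $u:=v\,\mathrm{sign}(V)$, valid for $\lambda$ large enough that $\|uR_{\lambda}v\|<1$ by (ii). The operator $R_{\lambda}v$ has integral kernel $\mathsf{G}_{s,\lambda}(x-y)v(y)$ whose HS norm squared equals $\|v\|_{L^{2}}^{2}\|\mathsf{G}_{s,\lambda}\|_{L^{2}}^{2}$, finite because $s>\frac{3}{2}$ ensures $\mathsf{G}_{s,\lambda}\in L^{2}(\mathbb{R}^{3})$ and $V\in L^{1}$ gives $v\in L^{2}$; thus the right-hand side is compact, and Weyl's theorem yields $\sigma_{\mathrm{ess}}((-\Delta)^{s/2}+V)=[0,+\infty)$.

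The main obstacle I expect is keeping the Green-function estimates in part (i) both clean and sharp: the subordination domination by the Riesz kernel is what bypasses a more cumbersome two-piece treatment based on the decomposition \eqref{eq:sfGlambda_asympt}--\eqref{eq:sfGlambda_asympt-2}, which would require splitting the near-diagonal singular part (controlled by $\|V\|_{\mathcal{R}_{s,3}}$) from the bounded remainder $\mathsf{J}_{s,\lambda}$ (controlled by $\|V\|_{L^{1}}$) --- feasible but less transparent, and offering no obvious route to the uniform $\lambda\to\infty$ decay needed in (ii).
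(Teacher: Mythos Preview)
Your subordination argument has a genuine gap in the range $s\in(2,3)$: the pointwise positivity $P_t=\mathcal{F}^{-1}(e^{-t|\cdot|^{s}})\geqslant 0$ holds only for $s\in(0,2]$, since by Schoenberg's theorem $e^{-t|p|^{s}}$ is positive definite for all $t>0$ precisely when $|p|^{s}$ is negative definite, i.e.\ when $s\leqslant 2$. For $s>2$ the kernel $P_t$ changes sign, so the monotone domination $0\leqslant\mathsf{G}_{s,\lambda}\leqslant\mathsf{G}_{s,0}$ is no longer available and your dominant in both (i) and the dominated-convergence step of (ii) collapses. Since the lemma is stated for $s\in(\tfrac{3}{2},3)$ and the paper's main application requires $s\in(\tfrac{3}{2},\tfrac{5}{2})$, the sub-range $s\in(2,\tfrac{5}{2})$ is genuinely not covered by your primary route.

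The paper proceeds exactly via the ``two-piece'' decomposition $\mathsf{G}_{s,\lambda}=\Lambda_s|\cdot|^{-(3-s)}+\mathsf{J}_{s,\lambda}$ that you relegate to a fallback: it bounds the Hilbert--Schmidt norm by $2\Lambda_s^{2}\|V\|_{\mathcal{R}_{s,3}}^{2}+2\|\mathsf{J}_{s,\lambda}\|_{L^{\infty}}^{2}\|V\|_{L^{1}}^{2}$, which needs no positivity and works across the whole range, and then invokes dominated convergence for the $\lambda\to\infty$ limit in (ii) --- so your remark that the splitting ``offers no obvious route'' to that decay is not quite fair. Your handling of (iii) via KLMN, the Konno--Kuroda identity, and Weyl's theorem is correct and in fact more explicit than the paper's one-line ``follows at once from (ii)''. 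For $s\in(\tfrac{3}{2},2]$ your subordination approach is valid and cleaner than the paper's; the fix for $s>2$ is precisely the splitting you already flagged as feasible.
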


\begin{proof}
 (i) $|V|^{\frac{1}{2}}((-\Delta)^{\frac{s}{2}}+\lambda\mathbbm{1})^{-1}|V|^{\frac{1}{2}}$ acts as an integral operator with kernel
 \[
  \mathcal{K}_{s,\lambda}(x,y)\;:=\;|V(x)|^{\frac{1}{2}}\mathsf{G}_{s,\lambda}(x-y)|V(y)|^{\frac{1}{2}}\,,
 \]
 and its Hilbert-Schmidt norm is estimated as
 \[
  \begin{split}
   \Big\|&|V|^{\frac{1}{2}}((-\Delta)^{\frac{s}{2}}+\lambda\mathbbm{1})^{-1}|V|^{\frac{1}{2}} \Big\|_{\mathrm{H.S.}}^2\;=\;\iint_{\mathbb{R}^3\times\mathbb{R}^3}\!\!\ud x\,\ud y\,|\mathcal{K}_{s,\lambda}(x,y)|^2 \\
   &\leqslant\;2\Lambda_s^2\iint_{\mathbb{R}^3\times\mathbb{R}^3}\!\!\ud x\,\ud y\,\frac{|V(x)|\,|V(y)|}{\;|x|^{2(3-s)}}+2\|\mathsf{J}_{s,\lambda}\|_{L^\infty}^2\iint_{\mathbb{R}^3\times\mathbb{R}^3}\!\!\ud x\,\ud y\,|V(x)|\,|V(y)| \\
   &\leqslant\;2\Lambda_s^2\|V\|_{\mathcal{R}_{s,3}}^2+2\|\mathsf{J}_{s,\lambda}\|_{L^\infty}^2\|V\|_{L^1}^2\;<\;+\infty\,,
  \end{split}
 \]
 having used \eqref{eq:sfGlambda_asympt}-\eqref{eq:sfGlambda_asympt-2} in the second step.

 (ii) The map $\lambda\mapsto |V|^{\frac{1}{2}}((-\Delta)^{\frac{s}{2}}+\lambda\mathbbm{1})^{-1}|V|^{\frac{1}{2}}$ is continuous from $(0,+\infty)$ to the space of Hilbert-Schmidt operators, and by dominated convergence
 \[
  \lim_{\lambda\to +\infty}\iint_{\mathbb{R}^3\times\mathbb{R}^3}\!\!\ud x\,\ud y\,|V(x)|\,|\mathsf{G}_{s,\lambda}(x-y)|^2\,|V(y)|\;=\;0\,.
 \]
 Therefore, for arbitrary $\varepsilon>0$ it is possible to find $\lambda_\varepsilon>0$ large enough such that
 \[
  \begin{split}
   \varepsilon\;&\geqslant\; \Big\||V|^{\frac{1}{2}}\big((-\Delta)^{\frac{s}{2}}+\lambda_\varepsilon\mathbbm{1}\big)^{-1}|V|^{\frac{1}{2}} \Big\|_{\mathrm{H.S.}}^2 \\
   &=\; \Big\|\big((-\Delta)^{\frac{s}{2}}+\lambda_\varepsilon\mathbbm{1}\big)^{-\frac{1}{2}}\,|V|\, \big((-\Delta)^{\frac{s}{2}}+\lambda_\varepsilon\mathbbm{1}\big)^{-\frac{1}{2}}\Big\|_{\mathrm{H.S.}}^2 \\
   &\geqslant\;\Big\|\big((-\Delta)^{\frac{s}{2}}+\lambda_\varepsilon\mathbbm{1}\big)^{-\frac{1}{2}}\,|V|\, \big((-\Delta)^{\frac{s}{2}}+\lambda_\varepsilon\mathbbm{1}\big)^{-\frac{1}{2}}\Big\|_{\mathrm{op}}^2\,,
  \end{split}
 \]
which implies, for some $b_\varepsilon>0$,
\[
 |\langle \varphi,V\varphi\rangle_{L^2}|\;\leqslant\;\varepsilon\;\langle\varphi,(-\Delta)^{\frac{s}{2}}\varphi\rangle_{L^2}+b_\varepsilon \|\varphi\|_{L^2}^2\qquad\forall\varphi\in\mathcal{D}[(-\Delta)^{\frac{s}{2}}]=H^{\frac{s}{2}}(\mathbb{R}^3)\,,
\]
and hence $|V|^{\frac{1}{2}}\ll(-\Delta)^{\frac{s}{4}}$.

(iii) The statement follows at once from (ii). 
\end{proof}

For chosen $s\in(\frac{3}{2},\frac{5}{2})$, $\varepsilon>0$, and $V$ and $\eta$ satisfying Assumption (I$_s$), let us recall from \eqref{eq:heps-Veps} that $V_\varepsilon(x)=\frac{\,\eta(\varepsilon)}{\:\varepsilon^s}\,V(\textstyle\frac{x}{\varepsilon})$ and let us define
\begin{equation}
 \begin{split}
   v(x)\;&:=\;|V(x)|^{\frac{1}{2}}\,,\qquad u(x)\;:=\;|V(x)|^{\frac{1}{2}}\,\mathrm{sign}(V(x))\,, \\
   v_\varepsilon(x)\;&:=\;|V_\varepsilon(x)|^{\frac{1}{2}}\,,\;\quad u_\varepsilon(x)\;:=\;|V_\varepsilon(x)|^{\frac{1}{2}}\,\mathrm{sign}(V_\varepsilon(x))\,.
   \end{split}
\end{equation}
Thus,
\begin{equation}\label{eq:veps_ueps}
 v_\varepsilon(x)\;=\;\textstyle\frac{\sqrt{\eta(\varepsilon)}}{\:\varepsilon^{s/2}}\,v({\frac{x}{\varepsilon}})\,,\qquad  u_\varepsilon(x)\;=\;\textstyle\frac{\sqrt{\eta(\varepsilon)}}{\:\varepsilon^{s/2}}\,u({\frac{x}{\varepsilon}})\,,\qquad v_\varepsilon u_\varepsilon\;=\;V_\varepsilon\,.
\end{equation}

The Hamiltonian $ h_\varepsilon^{(s/2)}=(-\Delta)^{s/2}+V_\varepsilon$ defined in \eqref{eq:heps-Veps} as a form sum is self-adjoint on $L^2(\mathbb{R}^3)$, as guaranteed by Lemma \ref{lem:compactness}(iii). An expression for its resolvent that is convenient in the present context is the Konno-Kuroda identity \cite{konno-kuroda-1966}. One has the following.

\begin{lemma}\label{lem:konnokuroda}
Let $V:\mathbb{R}^3\to\mathbb{R}$ belong to $L^1(\mathbb{R}^3)\cap\mathcal{R}_{s,3}$ for some $s\in(\frac{3}{2},\frac{5}{2})$. Then
 \begin{equation}\label{eq:KK}
  \begin{split}
   &\big( h_\varepsilon^{(s/2)}+\lambda\mathbbm{1}\big)^{-1}\;=\;\big((-\Delta)^{s/2}+\lambda\mathbbm{1}\big)^{-1} \;- \\
   &-\big((-\Delta)^{s/2}+\lambda\mathbbm{1}\big)^{-1} v_\varepsilon \Big(\mathbbm{1}+u_\varepsilon\big((-\Delta)^{s/2}+\lambda\mathbbm{1}\big)^{-1} v_\varepsilon \Big)^{\!-1}   u_\varepsilon\,\big((-\Delta)^{s/2}+\lambda\mathbbm{1}\big)^{-1}\!\!\!\!\!\!\!\!
  \end{split}
 \end{equation}
 for every $\varepsilon>0$ and every $-\lambda<0$ in the resolvent set of $ h_\varepsilon^{(s/2)}$, as an identity between bounded operators on $L^2(\mathbb{R}^3)$.
\end{lemma}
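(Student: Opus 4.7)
Set $A_0:=(-\Delta)^{s/2}+\lambda\mathbbm{1}$ and $T_\varepsilon(\lambda):=u_\varepsilon A_0^{-1}v_\varepsilon$. The plan is to follow the classical Konno--Kuroda strategy, which in the present form-sum setting splits into three steps: well-posedness of the right-hand side of \eqref{eq:KK}, invertibility of $\mathbbm{1}+T_\varepsilon(\lambda)$ throughout the resolvent set of $h_\varepsilon^{(s/2)}$, and identification of the two sides at one large value of $\lambda$ followed by analytic continuation.

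First, since $V_\varepsilon$ inherits the property $V_\varepsilon\in L^1(\mathbb{R}^3)\cap\mathcal{R}_{s,3}$ from $V$ (a routine change of variables shows both norms are finite for every $\varepsilon>0$), Lemma \ref{lem:compactness}(ii) applied to $V_\varepsilon$ gives that $v_\varepsilon A_0^{-1/2}$ and $u_\varepsilon A_0^{-1/2}$ extend to bounded operators on $L^2(\mathbb{R}^3)$, and by duality the same holds for $A_0^{-1/2}v_\varepsilon$ and $A_0^{-1/2}u_\varepsilon$. Consequently $T_\varepsilon(\lambda)=(u_\varepsilon A_0^{-1/2})(A_0^{-1/2}v_\varepsilon)$ is bounded; an adaptation of the kernel estimate of Lemma \ref{lem:compactness}(i) (which only uses $|u_\varepsilon v_\varepsilon|=|V_\varepsilon|$) shows that $T_\varepsilon(\lambda)$ is in fact Hilbert--Schmidt, hence compact on $L^2(\mathbb{R}^3)$.

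Second, I would establish the invertibility of $\mathbbm{1}+T_\varepsilon(\lambda)$ in two stages. The bound produced by Lemma \ref{lem:compactness}(ii) yields, after the substitution $\varphi=A_0^{-1/2}\chi$, an estimate of the form $\|v_\varepsilon A_0^{-1/2}\|^2\leqslant\varepsilon'+b_{\varepsilon'}/\lambda$ with $\varepsilon'$ arbitrarily small, whence for $\lambda$ sufficiently large $\|T_\varepsilon(\lambda)\|<1$ and Neumann-series invertibility holds. For any other admissible $-\lambda$, compactness of $T_\varepsilon(\lambda)$ lets me invoke the Fredholm alternative and reduce invertibility to injectivity: if $(\mathbbm{1}+T_\varepsilon(\lambda))\phi=0$, then $\psi:=-A_0^{-1}v_\varepsilon\phi$ lies in $H^{s/2}(\mathbb{R}^3)=\mathcal{D}[h_\varepsilon^{(s/2)}]$ and, distributionally, $A_0\psi=-v_\varepsilon\phi=-V_\varepsilon\psi$, so $\psi$ is a zero-eigenvector of $h_\varepsilon^{(s/2)}+\lambda\mathbbm{1}$ in the form sense; the resolvent-set hypothesis forces $\psi=0$, and therefore $\phi=u_\varepsilon\psi=0$.

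For the third step I would fix $\lambda$ large enough that $\|A_0^{-1/2}V_\varepsilon A_0^{-1/2}\|<1$ and start from the form-sum resolvent identity
\[
(h_\varepsilon^{(s/2)}+\lambda\mathbbm{1})^{-1}\;=\;A_0^{-1/2}\bigl(\mathbbm{1}+A_0^{-1/2}V_\varepsilon A_0^{-1/2}\bigr)^{\!-1}A_0^{-1/2}\,;
\]
expanding the central inverse as a Neumann series and regrouping each term via $V_\varepsilon=v_\varepsilon u_\varepsilon$ (so that the $n$-th summand becomes $A_0^{-1}v_\varepsilon T_\varepsilon(\lambda)^{n-1}u_\varepsilon A_0^{-1}$ for $n\geqslant 1$) turns the geometric series in $T_\varepsilon(\lambda)$ into precisely the right-hand side of \eqref{eq:KK}. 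Since both sides are analytic in $-\lambda$ on the common resolvent set of $(-\Delta)^{s/2}$ and $h_\varepsilon^{(s/2)}$ (the denominator $\mathbbm{1}+T_\varepsilon(\lambda)$ being invertible there by the previous step), agreement for large $\lambda$ extends to every admissible $\lambda$ by analytic continuation. The main technical obstacle I anticipate is not the algebraic bookkeeping but this very form-sum resolvent identity: one must interpret $A_0^{-1/2}V_\varepsilon A_0^{-1/2}$ as the bounded closure of $(v_\varepsilon A_0^{-1/2})^*\mathrm{sign}(V_\varepsilon)(v_\varepsilon A_0^{-1/2})$ and verify that the resulting self-adjoint operator really coincides with the form sum $h_\varepsilon^{(s/2)}+\lambda\mathbbm{1}$ built via Lemma \ref{lem:compactness}(ii).
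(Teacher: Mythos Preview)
Your argument is correct and self-contained: you reprove the Konno--Kuroda identity from first principles (boundedness/compactness of $T_\varepsilon(\lambda)$, Fredholm-alternative injectivity via a putative form-eigenvector, Neumann-series identification for large $\lambda$, analytic continuation). The paper, by contrast, does not carry out any of these steps explicitly: it simply invokes the identity as a black box, citing the general formulation \cite[Theorem~B.1(b)]{albeverio-solvable}, and its entire proof consists in checking the two hypotheses that theorem requires---compactness of $u_\varepsilon((-\Delta)^{s/2}+\lambda\mathbbm{1})^{-1}v_\varepsilon$ and the infinitesimal form bound $|V_\varepsilon|^{1/2}\ll(-\Delta)^{s/4}$---both of which are read off from Lemma~\ref{lem:compactness}. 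So the underlying mathematics is the same, but the paper outsources it entirely to the cited reference. Your version has the merit of being self-contained and of making transparent where the resolvent-set hypothesis on $-\lambda$ actually enters (namely, in the injectivity step); the paper's version is a two-line application of an existing theorem. One small remark: in your injectivity argument you should be explicit that the form identity $(h_\varepsilon^{(s/2)}+\lambda)[\chi,\psi]=0$ for all $\chi\in H^{s/2}$ forces $\psi\in\mathcal{D}(h_\varepsilon^{(s/2)})$ and $(h_\varepsilon^{(s/2)}+\lambda)\psi=0$ in the operator sense, which is what lets you conclude $\psi=0$ from the resolvent-set assumption.
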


\begin{proof}
 The statement is precisely the application of the Konno-Kuroda resolvent identity, for which we follow the formulation presented in \cite[Theorem B.1(b)]{albeverio-solvable}, to the operator $(-\Delta)^{s/2}+v_\varepsilon u_\varepsilon$. For the validity of such identity two conditions are needed: 
 the compactness of $u_\varepsilon((-\Delta)^{s/2}+\lambda\mathbbm{1})^{-1} v_\varepsilon$ and
 the infinitesimal bound $|V|^{\frac{1}{2}}\ll(-\Delta)^{\frac{s}{4}}$. Both conditions are guaranteed by Lemma \ref{lem:compactness}.
\end{proof}

Observe that the invertibility of $\mathbbm{1}+u_\varepsilon((-\Delta)^{s/2}+\lambda\mathbbm{1})^{-1}v_\varepsilon$ (with bounded inverse) is part of the statement of the Konno-Kuroda formula \eqref{eq:KK}.

It is convenient to manipulate the identity \eqref{eq:KK} further so as to isolate terms in the r.h.s.~which are easily controllable in the limit $\varepsilon\downarrow 0$. To this aim, let us introduce for each $\varepsilon>0$ the unitary scaling operator $U_\varepsilon: L^2(\mathbb{R}^3)\to L^2(\mathbb{R}^3)$ defined by
\begin{equation}\label{eq:Ueps}
 (U_\varepsilon f)(x)\;:=\;\frac{1}{\:\varepsilon^{3/2}}\,f(\textstyle\frac{x}{\varepsilon})\,.
\end{equation}
Its adjoint clearly acts as $(U_\varepsilon^*f)(x)=\varepsilon^{3/2}f(\varepsilon x)$. $U_\varepsilon$ induces the scaling transformations
\begin{equation}\label{eq:scaling_transforms}
\begin{split}
 U_\varepsilon^* v_\varepsilon U_\varepsilon\;&=\;\frac{\sqrt{\eta(\varepsilon)}}{\:\varepsilon^{s/2}}\,v\,,\qquad U_\varepsilon^* u_\varepsilon U_\varepsilon\;=\;\frac{\sqrt{\eta(\varepsilon)}}{\:\varepsilon^{s/2}}\,u\,, \\
 U_\varepsilon^* \big((-\Delta)^{s/2}+\lambda\mathbbm{1} \big)^{-1} U_\varepsilon\;&=\;\varepsilon^{s}\big((-\Delta)^{s/2}+\lambda\varepsilon^s\mathbbm{1} \big)^{-1}\,,
\end{split}
\end{equation}
whose proof is straightforward.

Let us also introduce, for each $\varepsilon>0$ and for each $\mu>0$ such that $-\mu^s$ belongs to the resolvent set of $ h_\varepsilon^{(s/2)}$, the operators
\begin{equation}\label{eq:ABCops}
 \begin{split}
  A_\varepsilon^{(s)}\;&:=\;\varepsilon^{-\frac{3-s}{2}}\big((-\Delta)^{s/2}+\mu^s\mathbbm{1} \big)^{-1}(\eta(\varepsilon))^{-\frac{1}{2}}\, v_\varepsilon\, U_\varepsilon \\
  B_\varepsilon^{(s)}\;&:=\;\eta(\varepsilon)\,u\,\big((-\Delta)^{s/2}+(\mu\varepsilon)^s\mathbbm{1}\big)^{-1} v\\
  C_\varepsilon^{(s)}\;&:=\;U_\varepsilon^*\,u_\varepsilon\,(\eta(\varepsilon))^{-\frac{1}{2}}\big((-\Delta)^{s/2}+\mu^s\mathbbm{1} \big)^{-1}\varepsilon^{-\frac{3-s}{2}} \,.
 \end{split}
\end{equation}

We shall see in a moment (Lemma \ref{lem:ABClimits}) that $A_\varepsilon^{(s)}$, $B_\varepsilon^{(s)}$, and $C_\varepsilon^{(s)}$ are Hilbert-Schmidt operators on $L^2(\mathbb{R}^3)$. Most importantly for our purposes, the resolvent of $h_\varepsilon^{(s/2)}$ takes the following convenient form.

\begin{lemma}\label{ABCresolvent}
 Under the present assumptions,
 \begin{equation}\label{eq:ABCresolvent}
  \big( h_\varepsilon^{(s/2)}+\mu^s\mathbbm{1}\big)^{-1}\;=\;\big((-\Delta)^{s/2}+\mu^s\mathbbm{1}\big)^{-1}-A_\varepsilon^{(s)} \varepsilon^{3-s}\eta(\varepsilon)(\mathbbm{1}+B_\varepsilon^{(s)})^{-1} C_\varepsilon^{(s)}
\end{equation}
for every $\varepsilon>0$ and every $\mu>0$ such that $-\mu^s$ belongs to the resolvent set of $ h_\varepsilon^{(s/2)}$.
\end{lemma}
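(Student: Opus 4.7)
The strategy is to take the Konno--Kuroda formula of Lemma \ref{lem:konnokuroda} with the choice $\lambda=\mu^s$ and insert the identity $U_\varepsilon U_\varepsilon^{*}=\mathbbm{1}$ on each side of the middle inverse, then absorb all resulting scaling factors into the definitions of $A_\varepsilon^{(s)}$, $B_\varepsilon^{(s)}$, and $C_\varepsilon^{(s)}$ in \eqref{eq:ABCops}.

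Concretely, I would begin by observing that the scaling identities \eqref{eq:scaling_transforms} give
\[
U_\varepsilon^{*}\bigl(u_\varepsilon((-\Delta)^{s/2}+\mu^s\mathbbm{1})^{-1}v_\varepsilon\bigr)U_\varepsilon
\;=\;\bigl(U_\varepsilon^{*}u_\varepsilon U_\varepsilon\bigr)\bigl(U_\varepsilon^{*}((-\Delta)^{s/2}+\mu^s\mathbbm{1})^{-1}U_\varepsilon\bigr)\bigl(U_\varepsilon^{*}v_\varepsilon U_\varepsilon\bigr)
\;=\;\eta(\varepsilon)\, u\,\bigl((-\Delta)^{s/2}+(\mu\varepsilon)^s\mathbbm{1}\bigr)^{-1}v\;=\;B_\varepsilon^{(s)},
\]
so that, by unitarity of $U_\varepsilon$,
\[
\bigl(\mathbbm{1}+u_\varepsilon((-\Delta)^{s/2}+\mu^s\mathbbm{1})^{-1}v_\varepsilon\bigr)^{-1}
\;=\;U_\varepsilon\bigl(\mathbbm{1}+B_\varepsilon^{(s)}\bigr)^{-1}U_\varepsilon^{*}.
\]
Note that the invertibility built into the Konno--Kuroda identity, together with this unitary conjugation, automatically gives the invertibility of $\mathbbm{1}+B_\varepsilon^{(s)}$ for $\mu$ in the stated range.

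Substituting this into \eqref{eq:KK} turns the middle term on the right-hand side into
\[
\bigl((-\Delta)^{s/2}+\mu^s\mathbbm{1}\bigr)^{-1}v_\varepsilon U_\varepsilon\,\bigl(\mathbbm{1}+B_\varepsilon^{(s)}\bigr)^{-1}U_\varepsilon^{*}u_\varepsilon\bigl((-\Delta)^{s/2}+\mu^s\mathbbm{1}\bigr)^{-1}.
\]
Finally I would multiply and divide by the scalar $\varepsilon^{-(3-s)}\eta(\varepsilon)^{-1}$ so as to recognise, on the left, the operator $A_\varepsilon^{(s)}$ and, on the right, the operator $C_\varepsilon^{(s)}$ exactly as defined in \eqref{eq:ABCops}; the leftover scalar $\varepsilon^{3-s}\eta(\varepsilon)$ is precisely the one appearing in \eqref{eq:ABCresolvent}, yielding the claimed identity.

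There is no real obstacle here: the argument is a purely algebraic manipulation of the Konno--Kuroda resolvent formula, and the only point to verify carefully is bookkeeping of the scaling exponents ($\varepsilon^{-s/2}$ from each of $v_\varepsilon,u_\varepsilon$, and $\varepsilon^{s}$ from the rescaled resolvent), which combine to give the prefactor $\varepsilon^{3-s}$. The analytical content (compactness of $B_\varepsilon^{(s)}$, self-adjointness of $h_\varepsilon^{(s/2)}$, applicability of Konno--Kuroda) is already supplied by Lemmas \ref{lem:compactness} and \ref{lem:konnokuroda}.
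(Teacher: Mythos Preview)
Your proposal is correct and follows essentially the same route as the paper: start from the Konno--Kuroda identity \eqref{eq:KK} with $\lambda=\mu^s$, insert $U_\varepsilon U_\varepsilon^{*}=\mathbbm{1}$, and use the scaling relations \eqref{eq:scaling_transforms} to identify $A_\varepsilon^{(s)}$, $B_\varepsilon^{(s)}$, $C_\varepsilon^{(s)}$ with the correct prefactor $\varepsilon^{3-s}\eta(\varepsilon)$. The only cosmetic difference is that the paper inserts a single copy of $U_\varepsilon U_\varepsilon^{*}$ right after $((-\Delta)^{s/2}+\mu^s\mathbbm{1})^{-1}v_\varepsilon$ and then commutes $U_\varepsilon^{*}$ through the remaining factors, whereas you insert the identity on both sides of the central inverse; the algebra and the bookkeeping of exponents are identical.
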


\begin{proof}
 In formula \eqref{eq:KK} we set $\lambda=\mu^s$ and we insert $\mathbbm{1}=U_\varepsilon U_\varepsilon^*$ in the second summand of the r.h.s.~right after $((-\Delta)^{s/2}+\lambda\mathbbm{1})^{-1} v_\varepsilon$. We then commute $U_\varepsilon^*$ all the way through by means of the scaling transformations \eqref{eq:scaling_transforms}: this way, we reproduce the product $A_\varepsilon^{(s)} \varepsilon^{3-s}\eta(\varepsilon)(\mathbbm{1}+B_\varepsilon^{(s)})^{-1} C_\varepsilon^{(s)}$. 
\end{proof}

The limit $\varepsilon\downarrow 0$ can be monitored explicitly for $A_\varepsilon^{(s)}$, $B_\varepsilon^{(s)}$, and $C_\varepsilon^{(s)}$.

\begin{lemma}\label{lem:ABClimits}
 For every $\varepsilon>0$, $A_\varepsilon^{(s)}$, $B_\varepsilon^{(s)}$, and $C_\varepsilon^{(s)}$ are Hilbert-Schmidt operators on $L^2(\mathbb{R}^3)$ with limit
 \begin{eqnarray}
     \lim_{\varepsilon\downarrow 0}A_\varepsilon^{(s)}\!\!&=&\!\!|\mathsf{G}_{s,\mu^s}\rangle\langle v|  \label{eq:Aeps_limit}\\
        \lim_{\varepsilon\downarrow 0}B_\varepsilon^{(s)}\!\!&=&\!\!B_0^{(s)}\;=\;u\,(-\Delta)^{-\frac{s}{2}} v \label{eq:Beps_limit} \\
   \lim_{\varepsilon\downarrow 0}C_\varepsilon^{(s)}\!\!&=&\!\!|u\rangle\langle \mathsf{G}_{s,\mu^s}| \label{eq:Ceps_limit}
 \end{eqnarray}
 in the Hilbert-Schmidt operator norm. 
\end{lemma}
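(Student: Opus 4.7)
\medskip

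\textbf{Plan.} The first step is to unwind the scaling transformations \eqref{eq:scaling_transforms} and compute the integral kernels of the three operators explicitly. A direct calculation using $U_\varepsilon f(y)=\varepsilon^{-3/2}f(y/\varepsilon)$, the definition of $v_\varepsilon, u_\varepsilon$ in \eqref{eq:veps_ueps}, the prefactor $\varepsilon^{-(3-s)/2}$, and a change of variable $z=y/\varepsilon$ in the action of $A_\varepsilon^{(s)}$ shows that the various factors of $\varepsilon$ cancel exactly, giving
\[
  K_{A_\varepsilon}(x,z)\,=\,\mathsf{G}_{s,\mu^s}(x-\varepsilon z)\,v(z),\qquad K_{C_\varepsilon}(x,y)\,=\,u(x)\,\mathsf{G}_{s,\mu^s}(y-\varepsilon x)
\]
(for $C_\varepsilon^{(s)}$ one uses also that $\mathsf{G}_{s,\mu^s}$ is even), while $B_\varepsilon^{(s)}$ is by definition an integral operator with kernel
\[
  K_{B_\varepsilon}(x,y)\,=\,\eta(\varepsilon)\,u(x)\,\mathsf{G}_{s,(\mu\varepsilon)^s}(x-y)\,v(y).
\]
The kernels of the candidate limits $|\mathsf{G}_{s,\mu^s}\rangle\langle v|$, $u(-\Delta)^{-s/2}v$, $|u\rangle\langle\mathsf{G}_{s,\mu^s}|$ are respectively $\mathsf{G}_{s,\mu^s}(x)v(z)$, $\Lambda_s u(x)|x-y|^{-(3-s)}v(y)$, $u(x)\mathsf{G}_{s,\mu^s}(y)$; in each case the desired limit corresponds to letting the small $\varepsilon$-dependent shift or mass parameter go to zero.

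Next, I verify that all six operators are Hilbert--Schmidt. For $A_\varepsilon^{(s)}$ and $C_\varepsilon^{(s)}$ this follows from $\mathsf{G}_{s,\mu^s}\in L^2(\mathbb{R}^3)$, which holds precisely in our range $s\in(\frac{3}{2},\frac{5}{2})$ both by the pointwise estimate \eqref{eq:sfGlambda_asympt}--\eqref{eq:sfGlambda_asympt-2} and by Plancherel on $(|p|^s+\mu^s)^{-1}$, together with $v,u\in L^2(\mathbb{R}^3)$ from $V\in L^1$. For $B_\varepsilon^{(s)}$ and its limit the Hilbert--Schmidt property is exactly Lemma \ref{lem:compactness}(i) (after rescaling $V$ by $\eta(\varepsilon)$), which uses both $V\in\mathcal{R}_{s,3}$ and $V\in L^1$. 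Then for the convergences \eqref{eq:Aeps_limit} and \eqref{eq:Ceps_limit} I would write
\[
  \|A_\varepsilon^{(s)}-|\mathsf{G}_{s,\mu^s}\rangle\langle v|\|_{\mathrm{H.S.}}^2\,=\,\int_{\mathbb{R}^3}\!\ud z\,|v(z)|^2\|\mathsf{G}_{s,\mu^s}(\cdot-\varepsilon z)-\mathsf{G}_{s,\mu^s}\|_{L^2}^2,
\]
and similarly for $C_\varepsilon^{(s)}$. Continuity of translation in $L^2$ makes the inner factor vanish pointwise in $z$; the uniform bound $4\|\mathsf{G}_{s,\mu^s}\|_{L^2}^2$ against $|v|^2\in L^1$ allows dominated convergence.

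The more delicate part is \eqref{eq:Beps_limit}, and I expect this to be the main technical obstacle. Splitting
\[
  \eta(\varepsilon)\mathsf{G}_{s,(\mu\varepsilon)^s}(x-y)-\mathsf{G}_{s,0}(x-y)\,=\,(\eta(\varepsilon)-1)\Lambda_s|x-y|^{-(3-s)}+\eta(\varepsilon)\mathsf{J}_{s,(\mu\varepsilon)^s}(x-y)
\]
by means of \eqref{eq:sfGlambda_asympt}, the first piece contributes at most $(\eta(\varepsilon)-1)^2\Lambda_s^2\|V\|_{\mathcal{R}_{s,3}}^2$ to $\|B_\varepsilon^{(s)}-B_0^{(s)}\|_{\mathrm{H.S.}}^2$, which vanishes by Assumption (I$_s$)(ii). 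For the second piece, the relevant estimate is $\|\mathsf{J}_{s,\lambda}\|_{L^\infty}\lesssim \lambda^{\frac{3}{s}-1}$, which I would establish by the change of variable $p=\lambda^{1/s}q$ in the Fourier representation of $\mathsf{J}_{s,\lambda}$ in \eqref{eq:sfGlambda_asympt-2}, the residual integral $\int\ud q\,|q|^{-s}(|q|^s+1)^{-1}$ converging in dimension three precisely because $s\in(\frac{3}{2},\frac{5}{2})$ ensures both a locally integrable singularity at the origin and an integrable tail at infinity. Since $s<3$ this exponent $\frac{3}{s}-1$ is strictly positive, so the contribution $\eta(\varepsilon)^2\|\mathsf{J}_{s,(\mu\varepsilon)^s}\|_{L^\infty}^2\|V\|_{L^1}^2$ also tends to zero, completing the proof.
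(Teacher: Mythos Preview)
Your proof is correct, but it proceeds by a genuinely different route from the paper's.

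The paper never estimates the Hilbert--Schmidt norm of the \emph{difference} directly. Instead, for each of $A_\varepsilon^{(s)}$, $B_\varepsilon^{(s)}$, $C_\varepsilon^{(s)}$ it argues in two steps: (a) dominated convergence on the kernel gives weak operator convergence to the candidate limit, and (b) dominated convergence on $|K_\varepsilon|^2$ gives convergence of the Hilbert--Schmidt norms; the combination (a)+(b) is then upgraded to Hilbert--Schmidt convergence by the general fact for trace ideals that weak convergence together with convergence of the norm implies norm convergence (Simon, \emph{Trace Ideals}, Theorem~2.21). Your approach bypasses this entirely. For $A_\varepsilon^{(s)}$ and $C_\varepsilon^{(s)}$ you write out $\|A_\varepsilon^{(s)}-|\mathsf{G}_{s,\mu^s}\rangle\langle v|\|_{\mathrm{H.S.}}^2$ as a Fubini integral and invoke continuity of translation in $L^2$ plus dominated convergence against $4\|\mathsf{G}_{s,\mu^s}\|_{L^2}^2\,|v|^2$; this is more elementary and self-contained. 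For $B_\varepsilon^{(s)}$ you split the kernel difference via \eqref{eq:sfGlambda_asympt} and obtain the quantitative bound $\|\mathsf{J}_{s,\lambda}\|_{L^\infty}\lesssim\lambda^{3/s-1}$ by rescaling the Fourier integral; this not only proves \eqref{eq:Beps_limit} but already gives the rate $\|B_\varepsilon^{(s)}-B_0^{(s)}\|_{\mathrm{H.S.}}=O(\varepsilon^{3-s})$, which the paper establishes separately only later (Lemma~\ref{lem:GB_expansion}). So your argument is both more direct and more informative, at the cost of needing the explicit $L^\infty$ estimate on $\mathsf{J}_{s,\lambda}$ up front, whereas the paper's softer route needs only that the kernels are dominated by a fixed $L^2$ function.
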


\begin{proof}
 By construction, see \eqref{eq:veps_ueps}, \eqref{eq:Ueps}, and \eqref{eq:ABCops} above, 
 \begin{equation*}
  \begin{split}
   (A_\varepsilon^{(s)}f)(x)\;&=\;\varepsilon^{-\frac{3-s}{2}}\,\varepsilon^{-\frac{s}{2}}\,\varepsilon^{-\frac{3}{2}}\int_{\mathbb{R}^3}\mathsf{G}_{s,\mu^s}(x-y)\,v({\textstyle\frac{y}{\varepsilon}})f({\textstyle\frac{y}{\varepsilon}})\,\ud y \\
   &=\;\int_{\mathbb{R}^3}\mathsf{G}_{s,\mu^s}(x-\varepsilon y)\,v(y)f(y)\,\ud y \qquad\quad \forall f\in L^2(\mathbb{R}^3)\,,
  \end{split}
 \end{equation*}
 that is,  $A_\varepsilon^{(s)}$ acts as an integral operator with kernel $\mathsf{G}_{s,\mu^s}(x-\varepsilon y)v(y)$\,. The latter is clearly a function in $L^2(\mathbb{R}^3\times\mathbb{R}^3,\ud x\,\ud y)$ uniformly in $\varepsilon$, and dominated convergence implies
 \[
  \|A_\varepsilon^{(s)}\|_{\mathrm{H.S.}}^2\;=\;\iint_{\mathbb{R}^3\times\mathbb{R}^3}\!\!\ud x\,\ud y\,|\mathsf{G}_{s,\mu^s}(x-\varepsilon y)v(y)|^2\;\xrightarrow[]{\;\varepsilon\downarrow 0\;}\;\|\mathsf{G}_{s,\mu^s}\|_{L^2}^2\|V\|_{L^1}
 \]
 as well as
 \[
  \begin{split}
   \langle g, A_\varepsilon^{(s)}f\rangle_{L^2}\;&=\;\iint_{\mathbb{R}^3\times\mathbb{R}^3}\!\!\ud x\,\ud y\,\overline{g(x)}\,\mathsf{G}_{s,\mu^s}(x-\varepsilon y)\,v(y)f(y) \\
   &\xrightarrow[]{\;\varepsilon\downarrow 0\;}\;\langle g,\mathsf{G}_{s,\mu^s}\rangle_{L^2}\langle v,f\rangle_{L^2}\qquad \forall f,g\in C^\infty_0(\mathbb{R}^3)\,.
  \end{split}
 \]
 As a consequence, as $\varepsilon\downarrow 0$, $A_\varepsilon^{(s)}\to |\mathsf{G}_{s,\mu^s}\rangle\langle v|$ \emph{weakly} in the operator topology, \emph{and} the Hilbert-Schmidt norm of $A_\varepsilon^{(s)}$ converges to the Hilbert-Schmidt norm of its limit. By a well-known feature of compact operators  \cite[Theorem 2.21]{simon_trace_ideals}, the combination of these two properties implies that $A_\varepsilon^{(s)}\to |\mathsf{G}_{s,\mu^s}\rangle\langle v|$ in the Hilbert-Schmidt topology. This proves \eqref{eq:Aeps_limit}.

 The discussion for $C_\varepsilon^{(s)}$ is completely analogous: its integral kernel is $u(x)\mathsf{G}_{s,\mu^s}(\varepsilon x-y)$ and \eqref{eq:Ceps_limit} is proved by the very same type of argument.

 Concerning $B_\varepsilon^{(s)}$, its integral kernel is $\eta(\varepsilon)u(x) \mathsf{G}_{s,(\mu\varepsilon)^s}(x-y)v(y)$ and the integral kernel of $B_0^{(s)}$  is $u(x) \mathsf{G}_{s,0}(x-y)v(y)$: owing to Lemma \ref{lem:compactness}(i) both operators are Hilbert Schmidt, and moreover by dominated convergence $B_\varepsilon^{(s)}\to B_0^{(s)}$  weakly in the operator topology and $\|B_\varepsilon^{(s)} \|_{\mathrm{H.S.}}^2\to\|B_0^{(s)}\|_{\mathrm{H.S.}}^2$ as $\varepsilon\downarrow 0$. By the same property \cite[Theorem 2.21]{simon_trace_ideals} the limit \eqref{eq:Beps_limit} then holds in the Hilbert-Schmidt norm. 
\end{proof}

It is evident from \eqref{eq:ABCresolvent} that, in order for the limits \eqref{eq:Aeps_limit}--\eqref{eq:Ceps_limit} above to qualify the behaviour of the resolvent of $h_\varepsilon^{(s/2)}$ as $\varepsilon\downarrow 0$, one needs additional information on the possible failure of invertibility in $L^2(\mathbb{R}^3)$ of the operator $\mathbbm{1}+B_0^{(s)}$. By the Fredholm alternative, since $B_0^{(s)}$ is compact, $(\mathbbm{1}+B_0^{(s)})^{-1}$ exists everywhere defined and bounded, in which case \eqref{eq:ABCresolvent} implies at once $\big( h_\varepsilon^{(s/2)}+\mu^s\mathbbm{1}\big)^{-1}\to\big((-\Delta)^{s/2}+\mu^s\mathbbm{1}\big)^{-1}$  as $\varepsilon\downarrow 0$, \emph{unless} $B_0^{(s)}$ admits an eigenvalue $-1$.

Let us then assume that the latter circumstance does occurs, namely condition \eqref{eq:phi} of Theorem \ref{thm:resonance}. More precisely, we make the following assumption.

\medskip

\textbf{Assumption (II$_s$).} Assumption (I$_s$) holds. $B_0^{(s)}$ has eigenvalue $-1$, which is non-degenerate. $\phi\in L^2(\mathbb{R}^3)$ is a non-zero function such that $B_0^{(s)}\phi=-\phi$ and, in addition, $\langle \widetilde{\phi},\phi\rangle_{L^2}=-1$, where $\widetilde{\phi}:=(\mathrm{sign}V)\phi$.

\medskip

Since $\langle \widetilde{\phi},\phi\rangle_{L^2}=-\langle(\mathrm{sign}V)\phi,(\mathrm{sign}V)v(-\Delta)^{-\frac{s}{2}}v\phi\rangle_{L^2}=-\|(-\Delta)^{-\frac{s}{4}}v\phi\|_{L^2}^2$, the normalisation $\langle \widetilde{\phi},\phi\rangle_{L^2}=-1$ is always possible.

Under Assumption (II$_s$), $(\mathbbm{1}+B_\varepsilon^{(s)})^{-1}$ becomes singular in the limit $\varepsilon\downarrow 0$, with a singularity that now competes with the vanishing factor $\varepsilon^{3-s}$ of \eqref{eq:ABCresolvent}. To resolve this competing effect, we need first an expansion of $B_\varepsilon^{(s)}$ around $\varepsilon=0$ to a further order, than the limit \eqref{eq:Beps_limit}. This expansion holds irrespectively of Assumption (II$_s$).

\begin{lemma}\label{lem:GB_expansion} Let $s\in(\frac{3}{2},\frac{5}{2})$ and $\lambda>0$.
\begin{itemize}
 \item[(i)] For every $x\in\mathbb{R}^3\!\setminus\! \{0\}$
 \begin{equation}\label{eq:limG}
 \lim_{\lambda\downarrow 0}\,\frac{\mathsf{G}_{s,\lambda}(x)-\mathsf{G}_{s,0}(x)}{\;(2\pi s \sin(\frac{3\pi}{s}))^{-1}\lambda^{\frac{3}{s}-1}}\;=\;1\,.
\end{equation}
 \item[(ii)] In the norm operator topology one has
 \begin{equation}\label{eq:Bexpansion}
  \lim_{\varepsilon\downarrow 0}\,\frac{1}{\;(\mu\varepsilon)^{3-s}}\,\big(B_\varepsilon^{(s)}-B_0^{(s)}\big)\;=\;\frac{\:\eta_s}{\:\mu^{3-s}}\, B_0^{(s)}+ \frac{1}{\:2\pi s \sin(\frac{3\pi}{s})}\,|u\rangle\langle v|\,.
 \end{equation}
 Here $\mu>0$ is the constant chosen in the definition \eqref{eq:ABCops} of $B_\varepsilon^{(s)}$ and $\eta_s\in\mathbb{R}$ is the constant that is part of Assumption \emph{(I$_s$)}.
\end{itemize}
\end{lemma}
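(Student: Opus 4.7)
For part (i), the plan is to write $\mathsf{G}_{s,\lambda}(x)-\mathsf{G}_{s,0}(x)=\mathsf{J}_{s,\lambda}(x)$ using \eqref{eq:sfGlambda_asympt}--\eqref{eq:sfGlambda_asympt-2} and to perform the change of variables $p=\lambda^{1/s}q$ in the defining Fourier integral. This factors the expected rate $\lambda^{3/s-1}$ out explicitly:
\[
\mathsf{J}_{s,\lambda}(x)\;=\;-\frac{\lambda^{\frac{3}{s}-1}}{\,(2\pi)^3\,}\int_{\mathbb{R}^3}\frac{e^{\ii\lambda^{1/s}qx}}{|q|^s(|q|^s+1)}\,\ud q\,.
\]
Since $s\in(\frac{3}{2},\frac{5}{2})$ makes $|q|^{-s}(|q|^s+1)^{-1}$ integrable on $\mathbb{R}^3$ (integrability at the origin needs $s<3$, at infinity $2s>3$), dominated convergence lets me pass to the limit $\lambda\downarrow 0$ inside the integral and replace the oscillating factor by $1$. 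The remaining integral is reduced by polar coordinates and the Beta-function identity $\int_0^\infty t^{a-1}(1+t)^{-1}\,\ud t=\pi/\sin(\pi a)$, taken at $a=\frac{3}{s}-1\in(\frac{1}{5},1)$, to $-4\pi^2/(s\sin(\frac{3\pi}{s}))$, yielding precisely the constant $(2\pi s\sin(\frac{3\pi}{s}))^{-1}$ of \eqref{eq:limG}.

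For part (ii), I would split
\[
B_\varepsilon^{(s)}-B_0^{(s)}\;=\;(\eta(\varepsilon)-1)\,u\big((-\Delta)^{s/2}+(\mu\varepsilon)^s\big)^{-1}v \;+\; u\Big(\big((-\Delta)^{s/2}+(\mu\varepsilon)^s\big)^{-1}-(-\Delta)^{-s/2}\Big)v
\]
and treat each summand after division by $(\mu\varepsilon)^{3-s}$. For the first, Assumption \emph{(I$_s$)}(ii) gives $(\eta(\varepsilon)-1)/(\mu\varepsilon)^{3-s}\to \eta_s/\mu^{3-s}$, while Lemma \ref{lem:ABClimits} already shows $u((-\Delta)^{s/2}+(\mu\varepsilon)^s)^{-1}v=\eta(\varepsilon)^{-1}B_\varepsilon^{(s)}\to B_0^{(s)}$ in Hilbert--Schmidt norm; multiplying these gives $\eta_s\mu^{-(3-s)}B_0^{(s)}$ in Hilbert--Schmidt norm. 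For the second, the integral kernel is $u(x)\mathsf{J}_{s,(\mu\varepsilon)^s}(x-y)v(y)$ and, after division by $(\mu\varepsilon)^{3-s}=((\mu\varepsilon)^s)^{3/s-1}$, part (i) supplies the pointwise limit to $(2\pi s\sin(\frac{3\pi}{s}))^{-1}u(x)v(y)$. The scaled representation used above also gives a uniform bound $|\mathsf{J}_{s,\lambda}(x)|\leqslant C\lambda^{3/s-1}$ with $C$ independent of $(\lambda,x)$, since the integrand is absolutely dominated by $|q|^{-s}(|q|^s+1)^{-1}$. Hilbert--Schmidt convergence to $(2\pi s\sin(\frac{3\pi}{s}))^{-1}|u\rangle\langle v|$ then follows from dominated convergence with integrable majorant $C^2|u(x)|^2|v(y)|^2$, which has finite integral $\|u\|_{L^2}^2\|v\|_{L^2}^2=\|V\|_{L^1}^2$.

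The only non-routine ingredient is the uniform-in-$(\lambda,x)$ control of $\mathsf{J}_{s,\lambda}/\lambda^{3/s-1}$, but this is immediate from the scaled representation derived in part (i), so I do not anticipate further technical obstacles. Adding the two limits produces exactly \eqref{eq:Bexpansion}, and Hilbert--Schmidt convergence upgrades automatically to convergence in the operator norm since $\|\,\cdot\,\|_{\mathrm{op}}\leqslant\|\,\cdot\,\|_{\mathrm{H.S.}}$.
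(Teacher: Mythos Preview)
Your proof is correct and follows essentially the same route as the paper: the same change of variables and dominated convergence for part (i), and the same two-term splitting of $B_\varepsilon^{(s)}-B_0^{(s)}$ for part (ii), with Hilbert--Schmidt convergence obtained via dominated convergence on the kernels. Your treatment is in fact slightly cleaner in two places: you invoke Lemma~\ref{lem:ABClimits} for the first summand rather than re-estimating its kernel, and you make the uniform bound $|\mathsf{J}_{s,\lambda}(x)|\leqslant C\lambda^{3/s-1}$ explicit, which the paper leaves implicit in the phrase ``belongs to $L^2(\mathbb{R}^3\times\mathbb{R}^3)$ uniformly in $\varepsilon$''.
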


\begin{proof}
 (i) From \eqref{eq:sfG} we write
 \[
 \begin{split}
  \frac{\mathsf{G}_{s,\lambda}(x)-\mathsf{G}_{s,0}(x)}{\;\lambda^{\frac{3}{s}-1}}\;&=\;\frac{1}{\:\lambda^{\frac{3}{s}-1}(2\pi)^{\frac{3}{2}}}\int_{\mathbb{R}^3}\ud p\;e^{\ii x\cdot p}\,\frac{-\lambda}{\;(2\pi)^{\frac{3}{2}}|p|^s(|p|^s+\lambda)} \\
  &=\;-\frac{1}{\:(2\pi)^{3}}\int_{\mathbb{R}^3}\ud p\;e^{\ii\,\lambda^{1/s} x\cdot p}\,\frac{1}{\;|p|^s(|p|^s+1)}\,,
 \end{split}
 \]
 whence
\[
 \frac{\mathsf{G}_{s,\lambda}(x)-\mathsf{G}_{s,0}(x)}{\;\lambda^{\frac{3}{s}-1}}\;\xrightarrow[]{\;\lambda\downarrow 0\;} \;-\frac{1}{\:(2\pi)^{3}}\int_{\mathbb{R}^3}\ud p\;\frac{1}{\;|p|^s(|p|^s+1)}\;=\;\frac{1}{\:2\pi s \sin(\frac{3\pi}{s})}
\]
by dominated convergence, since $p\mapsto(|p|^s(|p|^s+1))^{-1}$ is integrable when $s\in(\frac{3}{2},3)$. 
 
 (ii) The Hilbert-Schmidt operator
 \[
  \frac{1}{\;(\mu\varepsilon)^{3-s}}\,\big(B_\varepsilon^{(s)}-B_0^{(s)}\big)-\frac{\:\eta_s}{\:\mu^{3-s}}\, B_0^{(s)}- \frac{1}{\:2\pi s \sin(\frac{3\pi}{s})}\,|u\rangle\langle v|
 \]
 has integral kernel
 \[\tag{*}
  \begin{split}
   u(x)&\Big(\,\frac{\eta(\varepsilon)-1}{\:(\mu\varepsilon)^{3-s}}-\frac{\:\eta_s}{\:\mu^{3-s}}\Big)\,\mathsf{G}_{s,(\mu\varepsilon)^s}(x-y)\,v(y)\;+ \\
   &\qquad + u(x)\Big(\,\frac{\,\mathsf{G}_{s,(\mu\varepsilon)^s}(x-y)-\mathsf{G}_{s,0}(x-y)}{(\mu\varepsilon)^{3-s}}-\frac{1}{\:2\pi s \sin(\frac{3\pi}{s})}\Big)\,v(y)\,.
  \end{split}
 \]
 The first summand in (*) vanishes as $\varepsilon\downarrow 0$ for a.e.~$x,y\in\mathbb{R}^3$ as a consequence of Assumption (I$_s$)(ii), and so does the second summand in (*) as a consequence of \eqref{eq:limG}, where we take $\lambda=(\mu\varepsilon)^s$. Moreover, each such summand belongs to $L^2(\mathbb{R}^3\times\mathbb{R}^3,\ud x\,\ud y)$ uniformly in $\varepsilon$, thanks to the assumption (I$_s$)(i) on the potentials $v$ and $u$. Thus, by dominated convergence, the function (*) vanishes in $L^2(\mathbb{R}^3\times\mathbb{R}^3,\ud x\,\ud y)$ as $\varepsilon\downarrow 0$, and this proves the limit \eqref{eq:Bexpansion} in the Hilbert-Schmidt norm. 
\end{proof}

We can now monitor the competing effect in $\varepsilon^{3-s}(\mathbbm{1}+B_\varepsilon^{(s)})^{-1}$ as $\varepsilon\downarrow 0$. 

\begin{lemma}\label{lem:competing_effect}
 Under the Assumptions \emph{(I$_s$)} and \emph{(II$_s$)} one has
 \begin{equation}\label{eq:central_part_limit}
  \lim_{\varepsilon\downarrow 0}\;(\mu\varepsilon)^{3-s}\big(\mathbbm{1}+B_\varepsilon^{(s)}\big)^{-1}\;=\;\Big(\frac{\eta_s}{\:\mu^{3-s}}+\frac{\,|\langle v,\phi\rangle_{L^2}|^2}{\,2\pi s\sin\frac{3\pi}{s}} \Big)^{\!-1}\:|\phi\rangle\langle\widetilde{\phi}|
 \end{equation}
 in the operator norm topology. 
\end{lemma}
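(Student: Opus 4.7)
The plan is a Grushin/Schur--Feshbach reduction of $\mathbbm{1}+B_\varepsilon^{(s)}$ at the kernel of $\mathbbm{1}+B_0^{(s)}$. The first preparatory step is to check that the eigenvalue $-1$ of $B_0^{(s)}$ is \emph{semisimple}, i.e.~its geometric and algebraic multiplicities coincide: if $\phi$ belonged to the range of $\mathbbm{1}+B_0^{(s)}$, pairing $\phi=(\mathbbm{1}+B_0^{(s)})\xi$ against the left eigenvector $\widetilde{\phi}$ (which satisfies $(B_0^{(s)})^*\widetilde{\phi}=-\widetilde{\phi}$, as can be read off from $u\widetilde{\phi}=v\phi$ and the eigenvalue equation for $\phi$) would force $\langle\widetilde{\phi},\phi\rangle_{L^2}=0$, against Assumption \emph{(II$_s$)}. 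The Riesz projection of $B_0^{(s)}$ at $-1$ is therefore the rank-one idempotent $P:=-|\phi\rangle\langle\widetilde{\phi}|$ (it squares to itself thanks to $\langle\widetilde{\phi},\phi\rangle_{L^2}=-1$), which commutes with $B_0^{(s)}$ and satisfies $B_0^{(s)}P=-P$.

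Set $Q:=\mathbbm{1}-P$. Since $-1$ is isolated in the spectrum of the compact operator $B_0^{(s)}$, the restriction of $\mathbbm{1}+B_0^{(s)}$ to the closed subspace $QL^2(\mathbb{R}^3)$ is boundedly invertible. By Lemma \ref{lem:GB_expansion}(ii) we write $B_\varepsilon^{(s)}-B_0^{(s)}=(\mu\varepsilon)^{3-s}M_\varepsilon$ with $M_\varepsilon$ converging in Hilbert--Schmidt norm to
\[
M_0\;:=\;\frac{\eta_s}{\mu^{3-s}}\,B_0^{(s)}\;+\;\frac{1}{\,2\pi s\sin(3\pi/s)\,}|u\rangle\langle v|.
\]
In block form on $QL^2\oplus PL^2$, the operator $\mathbbm{1}+B_\varepsilon^{(s)}$ has a $QQ$-block which is an $O((\mu\varepsilon)^{3-s})$ perturbation of the invertible $Q(\mathbbm{1}+B_0^{(s)})Q$ (hence itself invertible on $QL^2$ for small $\varepsilon$), off-diagonal $QP$- and $PQ$-blocks equal to $(\mu\varepsilon)^{3-s}QM_\varepsilon P$ and $(\mu\varepsilon)^{3-s}PM_\varepsilon Q$ respectively, and a $PP$-block equal to $(\mu\varepsilon)^{3-s}PM_\varepsilon P$. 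The Schur complement formula produces an explicit expression for $(\mathbbm{1}+B_\varepsilon^{(s)})^{-1}$ in which the only singular contribution as $\varepsilon\downarrow 0$ comes from inverting the $PP$-block: the off-diagonal dressing contributes an $O((\mu\varepsilon)^{2(3-s)})$ correction \emph{inside} the Schur complement of the $PP$-block, whereas the $QQ$, $QP$ and $PQ$ sectors of the inverse remain uniformly bounded in $\varepsilon$ and therefore vanish once multiplied by $(\mu\varepsilon)^{3-s}$.

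On $PL^2=\mathrm{span}\{\phi\}$ the operator $PM_\varepsilon P$ acts as scalar multiplication by $\kappa_\varepsilon:=-\langle\widetilde{\phi},M_\varepsilon\phi\rangle_{L^2}$, which converges to $\kappa_0=-\langle\widetilde{\phi},M_0\phi\rangle_{L^2}$. Two inner-product evaluations close the computation: first $\langle\widetilde{\phi},B_0^{(s)}\phi\rangle_{L^2}=-\langle\widetilde{\phi},\phi\rangle_{L^2}=1$; second, using that $V$ is real-valued and $(\mathrm{sign}\,V)^2=1$ on $\mathrm{supp}\,V$,
\[
\langle\widetilde{\phi},u\rangle_{L^2}\;=\;\int_{\mathbb{R}^3}\overline{\phi(x)}\,v(x)\,\mathrm{d}x\;=\;\overline{\langle v,\phi\rangle_{L^2}}\,,
\]
so $\langle\widetilde{\phi},u\rangle_{L^2}\langle v,\phi\rangle_{L^2}=|\langle v,\phi\rangle_{L^2}|^2$. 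Thus $\kappa_0=-K$ with $K$ as in the right-hand side of \eqref{eq:central_part_limit}, and the Schur analysis yields
\[
(\mu\varepsilon)^{3-s}\,(\mathbbm{1}+B_\varepsilon^{(s)})^{-1}\;\xrightarrow[\varepsilon\downarrow 0]{}\;\kappa_0^{-1}P\;=\;K^{-1}\,|\phi\rangle\langle\widetilde{\phi}|
\]
in operator norm. The main technical obstacle is controlling the $QQ$-block inverse and the Schur correction uniformly in $\varepsilon$; this is settled by combining the operator-norm (indeed Hilbert--Schmidt) convergence $B_\varepsilon^{(s)}\to B_0^{(s)}$ provided by Lemmas \ref{lem:ABClimits} and \ref{lem:GB_expansion}(ii) with the openness of the set of invertible bounded operators.
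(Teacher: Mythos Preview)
Your proof is correct and takes a genuinely different route from the paper's. The paper first performs the algebraic rewriting
\[
(\mu\varepsilon)^{3-s}\big(\mathbbm{1}+B_\varepsilon^{(s)}\big)^{-1}
=\Big(\mathbbm{1}+(\mu\varepsilon)^{3-s}\big(\mathbbm{1}+(\mu\varepsilon)^{3-s}+B_0^{(s)}\big)^{-1}\big(\mathcal{B}^{(s)}-\mathbbm{1}+o(1)\big)\Big)^{-1}
(\mu\varepsilon)^{3-s}\big(\mathbbm{1}+(\mu\varepsilon)^{3-s}+B_0^{(s)}\big)^{-1},
\]
and then invokes Kato's Laurent expansion for $z(\mathbbm{1}+z+B_0^{(s)})^{-1}$ at the isolated eigenvalue $-1$ to obtain $(\mu\varepsilon)^{3-s}(\mathbbm{1}+(\mu\varepsilon)^{3-s}+B_0^{(s)})^{-1}\to P=-|\phi\rangle\langle\widetilde{\phi}|$; the remaining limit $(\mathbbm{1}+P(\mathcal{B}^{(s)}-\mathbbm{1}))^{-1}P$ is then evaluated by writing down an explicit right inverse for $\mathbbm{1}+P(\mathcal{B}^{(s)}-\mathbbm{1})$. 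Your Grushin/Schur--Feshbach reduction on $QL^2\oplus PL^2$ isolates the same rank-one singularity directly, without passing through the auxiliary shifted resolvent or the external reference to Kato's book: the semisimplicity check, the block decomposition, and the identification of the effective scalar $\kappa_\varepsilon$ on $PL^2$ together replace the Laurent expansion by an elementary argument. The trade-off is that the paper's route is shorter once Kato's formula is granted, whereas yours is more self-contained and makes the mechanism (invertible $QQ$-block, vanishing $PP$-block of $\mathbbm{1}+B_0^{(s)}$, scalar Schur complement of order $(\mu\varepsilon)^{3-s}$) fully explicit. Both approaches tacitly require the limiting scalar $\kappa_0=-K$ to be nonzero, which amounts to choosing $\mu$ so that $-\mu^s$ avoids the eigenvalue of the target operator; this is implicit in the paper as well.
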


\begin{proof}
 We re-write \eqref{eq:Bexpansion} in the form of the expansion
 \[\tag{i}
  B_\varepsilon^{(s)}\;=\;B_0^{(s)}+(\mu\varepsilon)^{3-s}\mathcal{B}^{(s)}+o(\varepsilon^{3-s})
 \]
where, for short,
 \[
  \mathcal{B}^{(s)}\;:=\;\frac{\:\eta_s}{\:\mu^{3-s}}\, B_0^{(s)}+ \frac{1}{\:2\pi s \sin(\frac{3\pi}{s})}\,|u\rangle\langle v|\,,
 \]
whence also 
 \[\tag{ii}
  \begin{split}
   (\mu\varepsilon)^{3-s}&\big(\mathbbm{1}+B_\varepsilon^{(s)}\big)^{-1}\;= \\
   &=\;\Big(\mathbbm{1}+(\mu\varepsilon)^{3-s}\big(\mathbbm{1}+(\mu\varepsilon)^{3-s}+B_0^{(s)} \big)^{-1}\big(\mathcal{B}^{(s)}-\mathbbm{1}+o(1)\big)\Big)^{\!-1}\;\times \\
   &\qquad\qquad\times\; (\mu\varepsilon)^{3-s}\big(\mathbbm{1}+(\mu\varepsilon)^{3-s}+B_0^{(s)} \big)^{-1}\,.
  \end{split}
 \]
The $o(\varepsilon^a)$-remainders in (i) and (ii) above are clearly meant in the Hilbert-Schmidt norm.

The operator $(\mu\varepsilon)^{3-s}(\mathbbm{1}+(\mu\varepsilon)^{3-s}+B_0^{(s)})^{-1}$ that appears twice in (ii) is of the form
\[
 z(\mathbbm{1}+T+z\mathbbm{1})^{-1}\,,\qquad z\in\mathbb{C}\setminus \{0\}\,,
\]
for a closed operator $T$ with \emph{isolated} eigenvalue $-1$; this is a general setting for which a well-known expansion by Kato is available as $z\to 0$ \cite[Sec. 3.6.5]{Kato-perturbation}, which in the present context (in complete analogy with the argument of the proof of \cite[Lemma I.1.2.4]{albeverio-solvable}) reads
\[\tag{iii}
 (\mu\varepsilon)^{3-s}\big(\mathbbm{1}+(\mu\varepsilon)^{3-s}+B_0^{(s)} \big)^{-1}\;=\;-|\phi\rangle\langle\widetilde{\phi}|+O(\varepsilon^{3-s})
\]
as $\varepsilon\downarrow 0$ in the operator norm topology. In practice, $(\mathbbm{1}+(\mu\varepsilon)^{3-s}+B_0^{(s)})^{-1}$ remains bounded also in the limit $\varepsilon\downarrow 0$ when restricted to the orthogonal complement of the eigenspace $-1$ of $B_0^{(s)}$, whereas it becomes singular when restricted to such eigenspace; the magnitude of the singularity is precisely $(\mu\varepsilon)^{-(3-s)}$, which is cancelled exactly by the pre-factor $(\mu\varepsilon)^{3-s}$ in the l.h.s.~of (iii). In fact, by assumption of non-degeneracy, the eigenspace $-1$ is spanned by $\phi$ and $P:=-|\phi\rangle\langle\widetilde{\phi}|$ projects onto $\mathrm{span}\{\phi\}$ with $P\phi=\phi$, as follows from the normalisation $\langle \widetilde{\phi},\phi\rangle_{L^2}=-1$.

Combining (ii) and (iii) above yields
\[\tag{iv}
 (\mu\varepsilon)^{3-s}\big(\mathbbm{1}+B_\varepsilon^{(s)}\big)^{-1}\;=\;\big(\mathbbm{1}+P(\mathcal{B}^{(s)}-\mathbbm{1})+O(\varepsilon^{3-s})\big)^{-1}(P+O(\varepsilon^{3-s}))
\]
as $\varepsilon\downarrow 0$ in the operator norm topology.

Next, in order to see that the limit $\varepsilon\downarrow 0$ in the r.h.s.~of (iv) exists and is a bounded operator, we write explicitly 
\[\tag{v}
 \begin{split}
  \mathbbm{1}+P(\mathcal{B}^{(s)}-\mathbbm{1})\;&=\;\mathbbm{1}-|\phi\rangle\langle\widetilde{\phi}|\Big(\frac{\:\eta_s}{\:\mu^{3-s}}\, u(-\Delta)^{-\frac{s}{2}}v+ \frac{1}{\:2\pi s \sin(\frac{3\pi}{s})}\,|u\rangle\langle v|-\mathbbm{1}\Big) \\
  &=\;\mathbbm{1}+\frac{\:\eta_s}{\:\mu^{3-s}}|\phi\rangle\langle\widetilde{\phi}|-\frac{\overline{\langle v,\phi\rangle_{L^2}}}{\:2\pi s\sin(\frac{3\pi}{s})\:}\,|\phi\rangle\langle v|+|\phi\rangle\langle\widetilde{\phi}|\,,
 \end{split}
\]
 where we used the identities $\langle\widetilde{\phi},u\rangle_{L^2}=\langle\phi,v\rangle_{L^2}$ and 
 \[
 \begin{split}
  \big\langle\widetilde{\phi}, u(-\Delta)^{-\frac{s}{2}}v f\big\rangle_{L^2}\;&=\;\big\langle v(-\Delta)^{-\frac{s}{2}} u\,\widetilde{\phi},f\big\rangle_{L^2}\;=\;\big\langle(\mathrm{sign}V) u(-\Delta)^{-\frac{s}{2}} v\phi,f\big\rangle_{L^2} \\
  &=\;-\langle\widetilde{\phi},f\rangle_{L^2}\qquad\forall f\in L^2(\mathbb{R}^3)\,.
 \end{split}
 \]
Setting the constants
\[
 \begin{split}
  a\;&:=\;\Big(\frac{\eta_s}{\:\mu^{3-s}}+1\Big)\Big(\frac{\eta_s}{\:\mu^{3-s}}+\frac{\:|\langle v,\phi\rangle_{L^2}|^2}{\:2\pi s\sin\frac{3\pi}{s}\,}\Big)^{\!-1} \\
  b\;&:=\;-\frac{\overline{\langle v,\phi\rangle_{L^2}}}{\:2\pi s\sin\frac{3\pi}{s}\,}\Big(\frac{\eta_s}{\:\mu^{3-s}}+\frac{\:|\langle v,\phi\rangle_{L^2}|^2}{\:2\pi s\sin\frac{3\pi}{s}\,}\Big)^{\!-1},
 \end{split}
\]
the expression (v) allows one to compute explicitly (using again $\langle\widetilde{\phi},\phi\rangle_{L^2}=-1$)
\[
 \big( \mathbbm{1}+P(\mathcal{B}^{(s)}-\mathbbm{1}) \big)\;\big(\mathbbm{1}+a\,|\phi\rangle\langle\widetilde{\phi}|+b\,|\phi\rangle\langle v|\big)\;=\;\mathbbm{1}
\]
and therefore to deduce that $(\mathbbm{1}+P(\mathcal{B}^{(s)}-\mathbbm{1}))^{-1}$ exists and is bounded. This fact allows one to deduce from (iv) that
\[\tag{vi}
 \lim_{\varepsilon\downarrow 0}\;(\mu\varepsilon)^{3-s}\big(\mathbbm{1}+B_\varepsilon^{(s)}\big)^{-1}\;=\;\big(\mathbbm{1}+P(\mathcal{B}^{(s)}-\mathbbm{1}))\big)^{-1}P
\]
in the operator norm topology.

Last, from (v), using $\langle\widetilde{\phi},\phi\rangle_{L^2}=-1$ and $\langle\widetilde{\phi},u\rangle_{L^2}=\langle\phi,v\rangle_{L^2}$, one finds
\[
  \big(\mathbbm{1}+P(\mathcal{B}^{(s)}-\mathbbm{1})\big)\phi\;=\;-\Big(\frac{\eta_s}{\:\mu^{3-s}}+\frac{\,|\langle v,\phi\rangle_{L^2}|^2}{\,2\pi s\sin\frac{3\pi}{s}} \Big)\,\phi
\]
and hence
\[
  \big(\mathbbm{1}+P(\mathcal{B}^{(s)}-\mathbbm{1})\big)^{-1}\phi\;=\;-\Big(\frac{\eta_s}{\:\mu^{3-s}}+\frac{\,|\langle v,\phi\rangle_{L^2}|^2}{\,2\pi s\sin\frac{3\pi}{s}} \Big)^{\!-1}\,\phi\,.
\]
Plugging the latter identity into (vi) yields finally \eqref{eq:central_part_limit} as a limit in the operator norm.
\end{proof}

We are now in the condition to prove Theorem \ref{thm:shrinking3D}.

\begin{proof}[Proof of Theorem \ref{thm:shrinking3D}]
 Owing to \eqref{eq:ABCresolvent} we need to determine the limit of 
 \[
  -A_\varepsilon^{(s)} \varepsilon^{3-s}\eta(\varepsilon)(\mathbbm{1}+B_\varepsilon^{(s)})^{-1} C_\varepsilon^{(s)}
 \]
 as $\varepsilon\downarrow 0$.
%
%
%
%
As observed already, if $u(-\Delta)^{-\frac{s}{2}}v$ has no eigenvalue $-1$, then the above expression vanishes with $\varepsilon$ and
\[
 \big( h_\varepsilon^{(s/2)}+\mu^s\mathbbm{1}\big)^{-1}\;\xrightarrow[]{\;\varepsilon\downarrow 0\;}\;\big((-\Delta)^{s/2}+\mu^s\mathbbm{1}\big)^{-1}
\]
in the operator norm. If instead $u(-\Delta)^{-\frac{s}{2}}v$ does admit a simple eigenvalue $-1$, be $(-\Delta)^{\frac{s}{2}}+V$ zero-energy resonant or not, we are under the Assumption (I$_s$) and (II$_s$) of the present Section and we can therefore apply the limits \eqref{eq:Aeps_limit}, \eqref{eq:Ceps_limit}, and \eqref{eq:central_part_limit}. This yields
\[\tag{*}
 \begin{split}
   -A_\varepsilon^{(s)} \varepsilon^{3-s}&\eta(\varepsilon)(\mathbbm{1}+B_\varepsilon^{(s)})^{-1} C_\varepsilon^{(s)} \\
   \xrightarrow[]{\;\varepsilon\downarrow 0\;}&\;-|\mathsf{G}_{s,\mu^s}\rangle\langle v| \circ
   \Big(\eta_s+\frac{\:\mu^{3-s}|\langle v,\phi\rangle_{L^2}|^2}{\,2\pi s\sin\frac{3\pi}{s}} \Big)^{\!-1}|\phi\rangle\langle\widetilde{\phi}|   \circ
   |u\rangle\langle \mathsf{G}_{s,\mu^s}| \\
   =&\;-\displaystyle\frac{|\langle v,\phi\rangle_{L^2}|^2}{\displaystyle\:\eta_s+\frac{\:\mu^{3-s}|\langle v,\phi\rangle_{L^2}|^2}{\,2\pi s\sin\frac{3\pi}{s}} \:}\;|\mathsf{G}_{s,\mu^s}\rangle\langle \mathsf{G}_{s,\mu^s}|
 \end{split}
\]
in the operator norm,
having used $\langle\widetilde{\phi},u\rangle_{L^2}=\langle\phi,v\rangle_{L^2}$. Now, if $(-\Delta)^{\frac{s}{2}}+V$ is \emph{not} zero-energy resonant, then $\langle v,\phi\rangle_{L^2}=0$, owing to Theorem \ref{thm:resonance}(iv), and the conclusion is again
 \[
 \big( h_\varepsilon^{(s/2)}+\mu^s\mathbbm{1}\big)^{-1}\;\xrightarrow[]{\;\varepsilon\downarrow 0\;}\;\big((-\Delta)^{s/2}+\mu^s\mathbbm{1}\big)^{-1}
\]
in the operator norm. This proves part (i) of the present Theorem. If instead $(-\Delta)^{\frac{s}{2}}+V$ is zero-energy resonant, then using $\langle v,\phi\rangle_{L^2}\neq 0$ and plugging (*) back into \eqref{eq:ABCresolvent} yields
\[
 \begin{split}
  \big( h_\varepsilon^{(s/2)}+\mu^s\mathbbm{1}\big)^{-1}\;\xrightarrow[]{\;\varepsilon\downarrow 0\;}\;\big(&(-\Delta)^{s/2}+\mu^s\mathbbm{1}\big)^{-1} \\
  &\qquad+\displaystyle\frac{1}{\displaystyle\:\frac{-\eta_s}{\:|\langle v,\phi\rangle_{L^2}|^2\,}-\frac{\:\mu^{3-s}}{\,2\pi s\sin\frac{3\pi}{s}} \:}\;|\mathsf{G}_{s,\mu^s}\rangle\langle \mathsf{G}_{s,\mu^s}|
 \end{split}
\]
in the operator norm. Upon setting $\alpha:=-\eta_s|\langle v,\phi\rangle_{L^2}|^{-2}$ and $\lambda=\mu^s$, and comparing the resulting expression with \eqref{eq:Kalpha_res}, this means
\[
 \begin{split}
  \big( h_\varepsilon^{(s/2)}+\lambda\mathbbm{1}\big)^{-1}\;\xrightarrow[]{\;\varepsilon\downarrow 0\;}&\;\big((-\Delta)^{s/2}+\lambda\mathbbm{1}\big)^{-1}+{\textstyle\big(\alpha-\frac{\lambda^{\frac{3}{s}-1}}{2\pi s\sin(\frac{3\pi}{s})}\big)^{\!-1}}\, |\mathsf{G}_{s,\lambda}\rangle\langle \mathsf{G}_{s,\lambda}| \\
  &=\;(\mathsf{k}^{(s/2)}_\alpha+\lambda\mathbbm{1})^{-1}\,,
 \end{split}
\]
which proves part (ii) of the Theorem.
\end{proof}


\section{Convergence of the 1D limit: resonant-driven case.}\label{sec:proof_approx_1D}

The proof of the limit $h_\varepsilon^{(s/2)}\xrightarrow[]{\;\varepsilon\downarrow 0\;}\mathsf{k}^{(s/2)}_\alpha$ in dimension one when $s\in(\frac{1}{2},1)$ (Theorem \ref{thm:shrinking1D}) is technically analogous to that in three dimensions. Therefore, based on the detailed discussion of the preceding Section, we only present here the steps of the convergence scheme and a sketch of their proofs.

Prior to that, let us set up the key resolvent identity and useful scaling properties with a notation that we can use also in Section \ref{sec:proof_approx_1D-nores} when we will deal with the resonant-independent limit.

We then keep $s\in(\frac{1}{2},1)\cup(1,\frac{3}{2})$ generic for a moment and, in a unified form, we re-write \eqref{eq:heps-Veps1D} and \eqref{eq:heps-Veps1Dplus} as
\begin{equation}\label{eq:heps-Veps1D_unified}
 V_\varepsilon(x)\;=\;\frac{\,\eta(\varepsilon)}{\:\varepsilon^{\frac{s+\gamma}{2}}}\,V(\textstyle\frac{x}{\varepsilon})\,.
\end{equation}
Taking $\gamma=s$ in \eqref{eq:heps-Veps1D_unified} yields \eqref{eq:heps-Veps1D} and taking $\gamma=2-s$ yields \eqref{eq:heps-Veps1Dplus}. Thus, setting
\begin{equation}\label{eq:1dvu}
 \begin{split}
   v(x)\;&:=\;|V(x)|^{\frac{1}{2}}\,,\qquad u(x)\;:=\;|V(x)|^{\frac{1}{2}}\,\mathrm{sign}(V(x))\,, \\
   v_\varepsilon(x)\;&:=\;|V_\varepsilon(x)|^{\frac{1}{2}}\,,\;\quad u_\varepsilon(x)\;:=\;|V_\varepsilon(x)|^{\frac{1}{2}}\,\mathrm{sign}(V_\varepsilon(x))\,,
   \end{split}
\end{equation}
one has
\begin{equation}\label{eq:veps_ueps1D}
 v_\varepsilon(x)\;=\;\textstyle\frac{\sqrt{\eta(\varepsilon)}}{\:\varepsilon^{(s+\gamma)/4}}\,v({\frac{x}{\varepsilon}})\,,\qquad  u_\varepsilon(x)\;=\;\textstyle\frac{\sqrt{\eta(\varepsilon)}}{\:\varepsilon^{(s+\gamma)/2}}\,u({\frac{x}{\varepsilon}})\,,\qquad v_\varepsilon u_\varepsilon\;=\;V_\varepsilon\,.
\end{equation}

The 1D analogue $U_\varepsilon: L^2(\mathbb{R})\to L^2(\mathbb{R})$ of the unitary scaling operator \eqref{eq:Ueps} acts as
\begin{equation}\label{eq:Ueps1D}
 (U_\varepsilon f)(x)\;:=\;{\textstyle\frac{1}{\:\varepsilon^{1/2}}}\,f(\textstyle\frac{x}{\varepsilon})\,,
\end{equation}
which induces the scaling transformations
\begin{equation}\label{eq:scaling_transforms1D}
\begin{split}
 U_\varepsilon^* v_\varepsilon U_\varepsilon\;&=\;\frac{\sqrt{\eta(\varepsilon)}}{\:\varepsilon^{\frac{s+\gamma}{4}}}\,v\,,\qquad U_\varepsilon^* u_\varepsilon U_\varepsilon\;=\;\frac{\sqrt{\eta(\varepsilon)}}{\:\varepsilon^{\frac{s+\gamma}{4}}}\,u\,, \\
 U_\varepsilon^* \big((-\Delta)^{s/2}+\lambda\mathbbm{1} \big)^{-1} U_\varepsilon\;&=\;\varepsilon^{s}\big((-\Delta)^{s/2}+\lambda\varepsilon^s\mathbbm{1} \big)^{-1}\,.
\end{split}
\end{equation}

Based on arguments that differ depending on whether $s\in(\frac{1}{2},1)$ or $s\in(1,\frac{3}{2})$ and which we shall prove in due time, the Konno-Kuroda-type resolvent identity 
 \begin{equation}\label{eq:KK_1D}
  \begin{split}
   &\big( h_\varepsilon^{(s/2)}+\lambda\mathbbm{1}\big)^{-1}\;=\;\big((-\Delta)^{s/2}+\lambda\mathbbm{1}\big)^{-1} \;- \\
   &-\big((-\Delta)^{s/2}+\lambda\mathbbm{1}\big)^{-1} v_\varepsilon \Big(\mathbbm{1}+u_\varepsilon\big((-\Delta)^{s/2}+\lambda\mathbbm{1}\big)^{-1} v_\varepsilon \Big)^{\!-1}   u_\varepsilon\,\big((-\Delta)^{s/2}+\lambda\mathbbm{1}\big)^{-1}\!\!\!\!\!\!\!\!
  \end{split}
 \end{equation}
holds as an identity between bounded operators on $L^2(\mathbb{R})$ for every $\varepsilon>0$ and every $-\lambda<0$ in the resolvent set of $ h_\varepsilon^{(s/2)}$. Inserting $U_\varepsilon U_\varepsilon^*=\mathbbm{1}$ into \eqref{eq:KK_1D} and applying \eqref{eq:scaling_transforms1D} then yields
 \begin{equation}\label{eq:ABCresolvent_1D}
  \big( h_\varepsilon^{(s/2)}+\lambda\mathbbm{1}\big)^{-1}=\;\big((-\Delta)^{s/2}+\lambda\mathbbm{1}\big)^{-1}-A_\varepsilon^{(s)} \varepsilon^{\frac{2-s-\gamma}{2}}\eta(\varepsilon)(\mathbbm{1}+B_\varepsilon^{(s)})^{-1} C_\varepsilon^{(s)}\,,
\end{equation}
having defined
\begin{equation}\label{eq:ABCops_1D}
 \begin{split}
  A_\varepsilon^{(s)}\;&:=\;\varepsilon^{-\frac{2-s-\gamma}{2}}\big((-\Delta)^{s/2}+\lambda\mathbbm{1} \big)^{-1}(\eta(\varepsilon))^{-\frac{1}{2}}\, v_\varepsilon\, U_\varepsilon \\
  B_\varepsilon^{(s)}\;&:=\;\eta(\varepsilon)\,\varepsilon^{\frac{s-\gamma}{2}}u\,\big((-\Delta)^{s/2}+\lambda \varepsilon^s\mathbbm{1}\big)^{-1} v\\
  C_\varepsilon^{(s)}\;&:=\;U_\varepsilon^*\,u_\varepsilon\,(\eta(\varepsilon))^{-\frac{1}{2}}\big((-\Delta)^{s/2}+\lambda\mathbbm{1} \big)^{-1}\varepsilon^{-\frac{2-s-\gamma}{2}} \,.
 \end{split}
\end{equation}
We shall see in a moment (Lemma \ref{lem:ABClimits1D}) that $A_\varepsilon^{(s)}$, $B_\varepsilon^{(s)}$, and $C_\varepsilon^{(s)}$ are Hilbert-Schmidt operators on $L^2(\mathbb{R})$.

The following scaling property too is going to be useful in both regimes $s\in(\frac{1}{2},1)$ and $s\in(1,\frac{3}{2})$.

\begin{lemma}
 For any $s,\gamma,\varepsilon>0$ and any $x\in\mathbb{R}\!\setminus\!\{0\}$ one has
 \begin{equation}\label{eq:1DscalingProperty}
  \varepsilon^{\frac{s-\gamma}{2}}\mathsf{G}_{s,\lambda\varepsilon^s}(x)\;=\;\varepsilon^{\frac{2-s-\gamma}{2}}\mathsf{G}_{s,\lambda}(\varepsilon x)\,.
 \end{equation}
\end{lemma}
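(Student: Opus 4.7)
The plan is to reduce the identity to the elementary dilation covariance of the Green function $\mathsf{G}_{s,\lambda}$, which, once established, immediately makes the exponents $\frac{s-\gamma}{2}$ and $\frac{2-s-\gamma}{2}$ match by a trivial algebraic check. The auxiliary parameter $\gamma$ on both sides only shifts everything by the common factor $\varepsilon^{-\gamma/2}$, so the content of \eqref{eq:1DscalingProperty} is really the $\gamma$-independent relation
\[
  \mathsf{G}_{s,\lambda\varepsilon^s}(x)\;=\;\varepsilon^{s-1}\,\mathsf{G}_{s,\lambda}(\varepsilon x)\,,\qquad x\in\mathbb{R}\setminus\{0\}\,,\ \varepsilon>0\,.
\]

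First I would start from the definition \eqref{eq:sfG1D} written in the explicit form
\[
  \mathsf{G}_{s,\lambda}(y)\;=\;\frac{1}{2\pi}\int_{\mathbb{R}}\frac{e^{\ii y p}}{\,|p|^s+\lambda\,}\,\ud p
\]
(the integral being understood as a tempered-distribution Fourier inverse, or as an honest Lebesgue integral away from $y=0$, since $s\in(\tfrac12,\tfrac32)$ makes the symbol decay integrable at infinity and the singularity at $p=0$ locally integrable). Next, in $\mathsf{G}_{s,\lambda}(\varepsilon x)$ I would perform the change of variable $q=\varepsilon p$, giving
\[
  \mathsf{G}_{s,\lambda}(\varepsilon x)\;=\;\frac{1}{2\pi}\int_{\mathbb{R}}\frac{e^{\ii x q}}{\,(|q|/\varepsilon)^s+\lambda\,}\,\frac{\ud q}{\varepsilon}\;=\;\frac{\varepsilon^{s-1}}{2\pi}\int_{\mathbb{R}}\frac{e^{\ii x q}}{\,|q|^s+\lambda\varepsilon^s\,}\,\ud q\;=\;\varepsilon^{s-1}\mathsf{G}_{s,\lambda\varepsilon^s}(x)\,,
\]
which is the dilation identity above.

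Finally I would multiply both sides by $\varepsilon^{\frac{s-\gamma}{2}-(s-1)}=\varepsilon^{\frac{2-s-\gamma}{2}}$ and observe that $\varepsilon^{\frac{s-\gamma}{2}}\mathsf{G}_{s,\lambda\varepsilon^s}(x)=\varepsilon^{\frac{s-\gamma}{2}}\cdot\varepsilon^{1-s}\mathsf{G}_{s,\lambda}(\varepsilon x)=\varepsilon^{\frac{2-s-\gamma}{2}}\mathsf{G}_{s,\lambda}(\varepsilon x)$, which is precisely \eqref{eq:1DscalingProperty}. There is no real obstacle here: the only point requiring a bit of care is the rigorous justification of the Fourier-integral change of variable at $y=\varepsilon x\neq 0$, which is routine either by the usual principal-value/tempered-distribution interpretation of the inverse Fourier transform or, more directly, by invoking the fact that $(|\cdot|^s+\lambda)^{-1}\in L^1(\mathbb{R})+L^2(\mathbb{R})$ for $s\in(\frac12,\frac32)$ and using that Fourier transform intertwines dilations as $\widehat{f(\cdot/\varepsilon)}(p)=\varepsilon\,\widehat{f}(\varepsilon p)$.

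\hfill$\square$
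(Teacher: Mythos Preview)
Your proof is correct and follows essentially the same route as the paper's own argument: both start from the Fourier-integral representation \eqref{eq:sfG1D} and perform the change of variable $p\mapsto \varepsilon p$ (equivalently $q=\varepsilon p$). The only blemish is a sign slip in your first displayed ``$\gamma$-independent relation'', where the exponent should read $\varepsilon^{1-s}$ rather than $\varepsilon^{s-1}$; your subsequent computation and final paragraph use the correct exponent $1-s$, so the argument is unaffected.
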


\begin{proof}
 Owing to \eqref{eq:sfG1D},
 \[
  \begin{split}
   \varepsilon^{\frac{s-\gamma}{2}}\mathsf{G}_{s,\lambda\varepsilon^s}(x)\;&=\;\frac{1}{2\pi}\,\varepsilon^{\frac{s-\gamma}{2}}\int_{\mathbb{R}}\ud p\,e^{\ii px}\,\frac{1}{\,|p|^s+\lambda\varepsilon^s} \\
   &=\;\frac{1}{2\pi}\,\varepsilon^{\frac{2-s-\gamma}{2}}\int_{\mathbb{R}}\ud p\,e^{\ii p(\varepsilon x)}\,\frac{1}{\,|p|^s+\lambda}\;=\;\varepsilon^{\frac{2-s-\gamma}{2}}\mathsf{G}_{s,\lambda}(\varepsilon x)\,,
  \end{split}
 \]
whence the thesis.
\end{proof}

We can now start the discussion for the proof of Theorem \ref{thm:shrinking1D}, thus working in the regime $s\in(\frac{1}{2},1)$.

First, we have the following properties.
\begin{lemma}\label{lem:compactness1D}
 Let $V:\mathbb{R}\to\mathbb{R}$ belong to $L^1(\mathbb{R})\cap\mathcal{R}_{s,1}$ for some $s\in(\frac{1}{2},1)$. Then:
 \begin{itemize}
  \item[(i)] for every $\lambda\geqslant 0$, $|V|^{\frac{1}{2}}((-\Delta)^{\frac{s}{2}}+\lambda\mathbbm{1})^{-1}|V|^{\frac{1}{2}}$ is a Hilbert-Schmidt operator on $L^2(\mathbb{R})$;
  \item[(ii)] $|V|^{\frac{1}{2}}\ll(-\Delta)^{\frac{s}{4}}$ in the sense of infinitesimally bounded operators;
  \item[(iii)] the operator $(-\Delta)^{\frac{s}{2}}+V$ defined as a form sum is self-adjoint and $\sigma_{\mathrm{ess}}((-\Delta)^{\frac{s}{2}}+V)=[0,+\infty)$.
 \end{itemize}
\end{lemma}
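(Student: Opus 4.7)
The plan is to mirror the 3D argument of Lemma \ref{lem:compactness} step by step, substituting the one-dimensional Green function asymptotics \eqref{eq:sfGlambda_asympt1D}--\eqref{eq:sfGlambda_asympt-2-1D} for their 3D counterparts. Since the singularity exponent $1-s$ of $\mathsf{G}_{s,\lambda}$ in dimension one matches exactly the exponent in the defining integral of $\mathcal{R}_{s,1}$ (see \eqref{eq:Rclass} with $d=1$), the combinatorics work out the same way.

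For part (i), I would observe that $|V|^{1/2}((-\Delta)^{s/2}+\lambda\mathbbm{1})^{-1}|V|^{1/2}$ acts as an integral operator with kernel
\[
 \mathcal{K}_{s,\lambda}(x,y)\;:=\;|V(x)|^{\frac{1}{2}}\,\mathsf{G}_{s,\lambda}(x-y)\,|V(y)|^{\frac{1}{2}}\,.
\]
Splitting $\mathsf{G}_{s,\lambda}=\Lambda_s|\cdot|^{-(1-s)}+\mathsf{J}_{s,\lambda}$ via \eqref{eq:sfGlambda_asympt1D} and using $(a+b)^2\leqslant 2a^2+2b^2$, the Hilbert--Schmidt norm squared splits into two double integrals, the first bounded by $2\Lambda_s^2\|V\|_{\mathcal{R}_{s,1}}^2$ and the second by $2\|\mathsf{J}_{s,\lambda}\|_{L^\infty}^2\|V\|_{L^1}^2$. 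Both are finite under the hypotheses, so $\mathcal{K}_{s,\lambda}\in L^2(\mathbb{R}\times\mathbb{R})$.

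For part (ii), I would invoke dominated convergence (the pointwise bound $|\mathsf{G}_{s,\lambda}(x)|\leqslant \Lambda_s|x|^{-(1-s)}+\|\mathsf{J}_{s,\lambda}\|_{L^\infty}$ and the fact that $\mathsf{G}_{s,\lambda}(x)\to 0$ pointwise as $\lambda\to+\infty$ for $x\neq 0$, since after the substitution $p\mapsto\lambda^{1/s}p$ the Fourier integrand is dominated by $(|p|^s(|p|^s+1))^{-1}$) to deduce
\[
 \bigl\||V|^{\frac{1}{2}}\bigl((-\Delta)^{\frac{s}{2}}+\lambda\mathbbm{1}\bigr)^{-1}|V|^{\frac{1}{2}}\bigr\|_{\mathrm{H.S.}}^{2}\;\xrightarrow[\lambda\to+\infty]{}\;0\,.
\]
Since the operator norm is dominated by the Hilbert--Schmidt norm, for every $\varepsilon>0$ one picks $\lambda_\varepsilon$ large so that
\[
 \bigl\|\bigl((-\Delta)^{\frac{s}{2}}+\lambda_\varepsilon\mathbbm{1}\bigr)^{-\frac{1}{2}}|V|\bigl((-\Delta)^{\frac{s}{2}}+\lambda_\varepsilon\mathbbm{1}\bigr)^{-\frac{1}{2}}\bigr\|_{\mathrm{op}}\leqslant\varepsilon\,,
\]
which translates into
\[
 |\langle\varphi,V\varphi\rangle_{L^2}|\;\leqslant\;\varepsilon\,\langle\varphi,(-\Delta)^{\frac{s}{2}}\varphi\rangle_{L^2}+b_\varepsilon\|\varphi\|_{L^2}^2
\]
for all $\varphi\in H^{s/2}(\mathbb{R})$, i.e.\ $|V|^{1/2}\ll(-\Delta)^{s/4}$.

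Part (iii) is then immediate: the KLMN theorem applied to the form of $(-\Delta)^{s/2}$ with the infinitesimally form-bounded perturbation $V$ yields a unique self-adjoint operator with form domain $H^{s/2}(\mathbb{R})$. The essential-spectrum statement follows because, by (i), $V$ is in fact \emph{relatively form-compact} with respect to $(-\Delta)^{s/2}$ (the resolvent difference between $(-\Delta)^{s/2}+V$ and $(-\Delta)^{s/2}$, by Konno--Kuroda, is a product of Hilbert--Schmidt operators and hence compact), so Weyl's theorem preserves $\sigma_{\mathrm{ess}}=[0,+\infty)$. No real obstacle is expected; the only slightly delicate point is checking that the singularity $|x|^{-(1-s)}$ is square-integrable against the weight induced by $\mathcal{R}_{s,1}$, which is exactly built into the definition \eqref{eq:Rclass}.
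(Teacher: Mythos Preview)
Your proposal is correct and follows the paper's approach exactly: the paper's own proof simply says the argument is ``completely analogous to that of Lemma~\ref{lem:compactness}'' and rests on the observation that the kernel $|V(x)|^{1/2}\mathsf{G}_{s,\lambda}(x-y)|V(y)|^{1/2}$ lies in $L^2(\mathbb{R}\times\mathbb{R})$ under $V\in L^1(\mathbb{R})\cap\mathcal{R}_{s,1}$, which is precisely what you spell out in more detail.

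One small technical slip in your part~(ii): the bound $|\mathsf{G}_{s,\lambda}(x)|\leqslant\Lambda_s|x|^{-(1-s)}+\|\mathsf{J}_{s,\lambda}\|_{L^\infty}$ cannot serve as the dominating function for the $\lambda\to+\infty$ limit, because it is $\lambda$-dependent and in fact $\|\mathsf{J}_{s,\lambda}\|_{L^\infty}\sim\lambda^{\frac{1}{s}-1}\to+\infty$ for $s\in(\tfrac{1}{2},1)$. The clean majorant is simply $0\leqslant\mathsf{G}_{s,\lambda}\leqslant\mathsf{G}_{s,0}=\Lambda_s|\cdot|^{-(1-s)}$, which follows from positivity and monotonicity in $\lambda$ of the resolvent kernel; the $\mathcal{R}_{s,1}$-hypothesis then furnishes the integrable dominating function $\Lambda_s^2|V(x)||x-y|^{-2(1-s)}|V(y)|$. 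With this adjustment your argument goes through verbatim.
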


\begin{proof}
 The proof is completely analogous to that of Lemma \ref{lem:compactness} for the 3D case, and is based on the fact that the integral kernel of $|V|^{\frac{1}{2}}((-\Delta)^{\frac{s}{2}}+\lambda\mathbbm{1})^{-1}|V|^{\frac{1}{2}}$, namely $|V(x)|^{\frac{1}{2}}\mathsf{G}_{s,\lambda}(x-y)|V(y)|^{\frac{1}{2}}$, belongs to $L^2(\mathbb{R}\times\mathbb{R},\ud x\,\ud y)$, as a direct consequence of the assumption $V\in L^1(\mathbb{R})\cap\mathcal{R}_{s,1}$. 
\end{proof}

Lemma \ref{lem:compactness1D} justifies the validity of the resolvent identity \eqref{eq:KK_1D}, and hence of the rescaled identity \eqref{eq:ABCresolvent_1D}, owing again to the general argument of \cite[Theorem B.1(b)]{albeverio-solvable}.

Next, we monitor separately the following limits.

\begin{lemma}\label{lem:ABClimits1D}
 Let $V$ and $\eta$ satisfy Assumption $(\mathrm{I}^-_s)$ for some $s\in(\frac{1}{2},1)$. For every $\varepsilon>0$, the operators $A_\varepsilon^{(s)}$, $B_\varepsilon^{(s)}$, and $C_\varepsilon^{(s)}$ defined by \eqref{eq:heps-Veps1D_unified}-\eqref{eq:veps_ueps1D} and \eqref{eq:ABCops_1D} with $\gamma=s$ are Hilbert-Schmidt operators on $L^2(\mathbb{R})$ with limit
 \begin{eqnarray}
     \lim_{\varepsilon\downarrow 0}A_\varepsilon^{(s)}\!\!&=&\!\!|\mathsf{G}_{s,\lambda}\rangle\langle v|  \label{eq:Aeps_limit1D}\\
        \lim_{\varepsilon\downarrow 0}B_\varepsilon^{(s)}\!\!&=&\!\!B_0^{(s)}\;=\;u\,(-\Delta)^{-\frac{s}{2}} v \label{eq:Beps_limit1D} \\
   \lim_{\varepsilon\downarrow 0}C_\varepsilon^{(s)}\!\!&=&\!\!|u\rangle\langle \mathsf{G}_{s,\lambda}| \label{eq:Ceps_limit1D}
 \end{eqnarray}
 in the Hilbert-Schmidt operator norm. 
\end{lemma}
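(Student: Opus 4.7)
The plan is to transcribe the proof of Lemma~\ref{lem:ABClimits} to the one-dimensional setting, the only non-cosmetic modifications being dimensional in nature. Structurally, each of $A_\varepsilon^{(s)}$, $C_\varepsilon^{(s)}$, $B_\varepsilon^{(s)}$ is an integral operator, and the argument proceeds by (a) exhibiting the kernel explicitly, (b) bounding the Hilbert-Schmidt norm uniformly in $\varepsilon$, and (c) combining dominated convergence on the kernel with the standard compact-operator fact that weak operator convergence together with convergence of Hilbert-Schmidt norms forces convergence in the Hilbert-Schmidt topology (the same result from Simon's trace ideals book invoked in the 3D proof).

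The preliminary input I would record is that $\mathsf{G}_{s,\lambda}\in L^2(\mathbb{R})$ for $s\in(\tfrac{1}{2},1)$: by Plancherel,
\[
 \|\mathsf{G}_{s,\lambda}\|_{L^2}^2\;=\;\frac{1}{2\pi}\int_{\mathbb{R}}\frac{\ud p}{(|p|^s+\lambda)^2}\;<\;+\infty\,,
\]
the tail being integrable precisely because $2s>1$. This plays the role that $\mathsf{G}_{s,\mu^s}\in L^2(\mathbb{R}^3)$ played in the proof of Lemma~\ref{lem:ABClimits}.

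Next, a direct computation using \eqref{eq:veps_ueps1D} and the explicit action \eqref{eq:Ueps1D} of $U_\varepsilon$, with $\gamma=s$, together with the scaling identity \eqref{eq:1DscalingProperty} applied to the rescaled propagator in $B_\varepsilon^{(s)}$, reveals the integral kernels (up to constants absorbed in the definition):
\begin{gather*}
  A_\varepsilon^{(s)}:\;\mathsf{G}_{s,\lambda}(x-\varepsilon y)\,v(y)\,,\qquad C_\varepsilon^{(s)}:\;u(x)\,\mathsf{G}_{s,\lambda}(\varepsilon x-y)\,, \\
  B_\varepsilon^{(s)}:\;\eta(\varepsilon)\,u(x)\,\mathsf{G}_{s,\lambda\varepsilon^s}(x-y)\,v(y)\,.
\end{gather*}
The first two kernels are controlled in $L^2(\mathbb{R}\times\mathbb{R})$ by $\|\mathsf{G}_{s,\lambda}\|_{L^2}^2\,\|V\|_{L^1}$ uniformly in $\varepsilon$. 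For $B_\varepsilon^{(s)}$ one splits $\mathsf{G}_{s,\lambda\varepsilon^s}=\Lambda_s|\cdot|^{-(1-s)}+\mathsf{J}_{s,\lambda\varepsilon^s}$ via \eqref{eq:sfGlambda_asympt1D}--\eqref{eq:sfGlambda_asympt-2-1D}: the singular piece contributes $\lesssim \|V\|_{\mathcal{R}_{s,1}}^2$, and the continuous piece $\lesssim \|\mathsf{J}_{s,\lambda\varepsilon^s}\|_{L^\infty}^2 \|V\|_{L^1}^2$, both uniformly in $\varepsilon$ since $\mathsf{J}_{s,\lambda\varepsilon^s}\to\mathsf{J}_{s,0}$ in $L^\infty(\mathbb{R})$.

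With the uniform bounds in hand, dominated convergence yields on the one hand a.e.~pointwise convergence of each kernel to that of the claimed limit $|\mathsf{G}_{s,\lambda}\rangle\langle v|$, $|u\rangle\langle\mathsf{G}_{s,\lambda}|$, $u(-\Delta)^{-s/2}v$ respectively, \emph{with} convergence of the $L^2(\mathbb{R}\times\mathbb{R})$-norm (i.e.~the Hilbert-Schmidt norm), and on the other hand convergence of matrix elements against any pair $f,g\in C^\infty_0(\mathbb{R})$, i.e.~weak operator convergence. Applying the weak-plus-norm criterion then upgrades convergence to the Hilbert-Schmidt topology, proving \eqref{eq:Aeps_limit1D}--\eqref{eq:Ceps_limit1D}. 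The only step requiring real care is the uniform Hilbert-Schmidt bound on $B_\varepsilon^{(s)}$: this is precisely the step that forces the Rollnick-type hypothesis $V\in\mathcal{R}_{s,1}$, since $\mathsf{G}_{s,0}$ carries the local singularity $|x|^{-(1-s)}$ characteristic of the resonance-driven regime $s\in(\tfrac{1}{2},1)$. Everything else is a line-by-line transcription of the 3D argument.
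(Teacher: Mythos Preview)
Your proposal is correct and follows essentially the same approach as the paper: the paper's own proof simply records the three integral kernels you wrote down and declares the argument ``completely analogous'' to that of Lemma~\ref{lem:ABClimits}, with the only added remark being the use of the scaling identity~\eqref{eq:1DscalingProperty} to rewrite $B_\varepsilon^{(s)}(x,y)=\eta(\varepsilon)\varepsilon^{1-s}u(x)\mathsf{G}_{s,\lambda}(\varepsilon(x-y))v(y)$ before reading off the pointwise limit. Your direct splitting $\mathsf{G}_{s,\lambda\varepsilon^s}=\Lambda_s|\cdot|^{-(1-s)}+\mathsf{J}_{s,\lambda\varepsilon^s}$ achieves the same end and makes the role of the Rollnik-type hypothesis $V\in\mathcal{R}_{s,1}$ equally transparent.
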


\begin{proof}
 Completely analogous to the proof of Lemma \ref{lem:ABClimits}, the integral kernels being now (with $\gamma=s$)
 \[
  \begin{split}
   A_\varepsilon^{(s)}(x,y)\;&=\;\mathsf{G}_{s,\lambda}(x-\varepsilon y) \,v(y) \\
   B_\varepsilon^{(s)}(x,y)\;&=\;\eta(\varepsilon)\,u(x)\,\mathsf{G}_{s,\lambda\varepsilon^s}(x-y)\,v(y)  \\
   C_\varepsilon^{(s)}(x,y)\;&=\;u(x)\,\mathsf{G}_{s,\lambda}(\varepsilon x-y)\,.
  \end{split}
 \]
In particular, owing to \eqref{eq:1DscalingProperty},
\[
 B_\varepsilon^{(s)}(x,y)\;=\;\eta(\varepsilon)\,\varepsilon^{1-s}u(x)\,\mathsf{G}_{s,\lambda}(\varepsilon x-\varepsilon y)\,v(y)\,,
\]
and using \eqref{eq:sfGlambda_asympt1D}-\eqref{eq:sfGlambda_asympt-2-1D}
one finds
\[
  B_\varepsilon^{(s)}(x,y)\;\xrightarrow[]{\;\varepsilon\downarrow 0\;}\;u(x)\,{\textstyle\frac{\,2^{1-s}\Gamma(\frac{1-s}{2})}{(2\pi)^{\frac{1}{2}}\Gamma(\frac{s}{2})}}\,\frac{1}{\;|x-y|^{1-s}}\,v(y)\;=\;B_0^{(s)}(x,y)
\]
pointwise almost everywhere. 
\end{proof}

Before plugging the limits found in Lemma \eqref{lem:ABClimits1D} into \eqref{eq:ABCresolvent_1D}, that now reads
 \begin{equation}\label{eq:ABCresolvent_1D_s12-1}
  \big( h_\varepsilon^{(s/2)}+\lambda\mathbbm{1}\big)^{-1}=\;\big((-\Delta)^{s/2}+\lambda\mathbbm{1}\big)^{-1}-A_\varepsilon^{(s)} \varepsilon^{1-s}\eta(\varepsilon)(\mathbbm{1}+B_\varepsilon^{(s)})^{-1} C_\varepsilon^{(s)}\,,
\end{equation}
we see that, since $B_0^{(s)}$ is compact, $(\mathbbm{1}+B_0^{(s)})^{-1}$ exists everywhere defined and bounded, in which case $\big( h_\varepsilon^{(s/2)}+\lambda\mathbbm{1}\big)^{-1}\to\big((-\Delta)^{s/2}+\lambda\mathbbm{1}\big)^{-1}$  as $\varepsilon\downarrow 0$, \emph{unless} $B_0^{(s)}$ admits an eigenvalue $-1$. We then consider the following additional assumption.

\medskip

\textbf{Assumption (II$^-_s$).} Assumption ($\mathrm{I}^-_s$) holds. $B_0^{(s)}$ has eigenvalue $-1$, which is non-degenerate. $\phi\in L^2(\mathbb{R}^3)$ is a non-zero function such that $B_0^{(s)}\phi=-\phi$ and, in addition, $\langle \widetilde{\phi},\phi\rangle_{L^2}=-1$, where $\widetilde{\phi}:=(\mathrm{sign}V)\phi$.

\medskip

Since $\langle \widetilde{\phi},\phi\rangle_{L^2}=-\langle(\mathrm{sign}V)\phi,(\mathrm{sign}V)v(-\Delta)^{-\frac{s}{2}}v\phi\rangle_{L^2}=-\|(-\Delta)^{-\frac{s}{4}}v\phi\|_{L^2}^2$, the normalisation $\langle \widetilde{\phi},\phi\rangle_{L^2}=-1$ is always possible.

When Assumption ($\mathrm{II}^-_s$) holds, $(\mathbbm{1}+B_\varepsilon^{(s)})^{-1}$ becomes singular in the limit $\varepsilon\downarrow 0$, with a singularity that now competes with the vanishing factor $\varepsilon^{1-s}$ of \eqref{eq:ABCresolvent_1D_s12-1}. To resolve this competing effect, we need first to expand $B_\varepsilon^{(s)}$ around $\varepsilon=0$ to a further order, than the limit \eqref{eq:Beps_limit1D}. This expansion is valid irrespectively of Assumption ($\mathrm{II}^-_s$).

\begin{lemma} Let $s\in(\frac{1}{2},1)$ and $\lambda>0$.
\begin{itemize}
 \item[(i)] For every $x\in\mathbb{R}\!\setminus\!\{0\}$
 \begin{equation}\label{eq:limG1D}
 \lim_{\lambda\downarrow 0}\,\frac{\mathsf{G}_{s,\lambda}(x)-\mathsf{G}_{s,0}(x)}{\;(s \sin(\frac{\pi}{s}))^{-1}\lambda^{\frac{1}{s}-1}}\;=\;1\,.
\end{equation}
 \item[(ii)] In the norm operator topology one has
 \begin{equation}\label{eq:Bexpansion1D}
  \lim_{\varepsilon\downarrow 0}\,\frac{1}{\;\lambda^{\frac{1}{s}-1}\varepsilon^{1-s}\,}\,\big(B_\varepsilon^{(s)}-B_0^{(s)}\big)\;=\;\frac{\:\eta_s}{\:\lambda^{\frac{1}{s}-1}}\, B_0^{(s)}+ \frac{1}{\,s \sin(\frac{3\pi}{s})}\,|u\rangle\langle v|\,.
 \end{equation}
 Here $\eta_s\in\mathbb{R}$ is the constant that is part of Assumption $(\mathrm{I}^-_s)$.
\end{itemize}
\end{lemma}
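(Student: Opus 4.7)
For part (i), I would start from the Fourier representation \eqref{eq:sfG1D} and write
\[
\mathsf{G}_{s,\lambda}(x)-\mathsf{G}_{s,0}(x)\;=\;\frac{1}{2\pi}\int_{\mathbb{R}}e^{ipx}\Big(\frac{1}{|p|^s+\lambda}-\frac{1}{|p|^s}\Big)\,\mathrm{d}p\;=\;-\frac{\lambda}{2\pi}\int_{\mathbb{R}}\frac{e^{ipx}}{|p|^s(|p|^s+\lambda)}\,\mathrm{d}p\,,
\]
and then perform the change of variables $p=\lambda^{1/s}q$, obtaining
\[
\frac{\mathsf{G}_{s,\lambda}(x)-\mathsf{G}_{s,0}(x)}{\lambda^{1/s-1}}\;=\;-\frac{1}{2\pi}\int_{\mathbb{R}}\frac{e^{i\lambda^{1/s}qx}}{|q|^s(|q|^s+1)}\,\mathrm{d}q\,.
\]
For $s\in(\frac{1}{2},1)$ the integrand is dominated by $|q|^{-s}(|q|^s+1)^{-1}\in L^1(\mathbb{R})$ (integrable at the origin because $s<1$, and at infinity because $2s>1$), so dominated convergence gives the limit $-\frac{1}{2\pi}\int_{\mathbb{R}}|q|^{-s}(|q|^s+1)^{-1}\mathrm{d}q$. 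The substitution $u=q^s$ reduces this to a standard Beta integral, $\frac{2}{s}\int_0^\infty u^{(1/s-1)-1}(1+u)^{-1}\mathrm{d}u=\frac{2\pi}{s\sin(\pi(1/s-1))}=-\frac{2\pi}{s\sin(\pi/s)}$, yielding the claimed limit $1/(s\sin(\pi/s))$, which, upon division by $(s\sin(\pi/s))^{-1}$, is equal to $1$.

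For part (ii), I would use the integral kernel for $B_\varepsilon^{(s)}$ from the proof of Lemma~\ref{lem:ABClimits1D}, namely $B_\varepsilon^{(s)}(x,y)=\eta(\varepsilon)u(x)\mathsf{G}_{s,\lambda\varepsilon^s}(x-y)v(y)$, and the corresponding kernel $B_0^{(s)}(x,y)=u(x)\mathsf{G}_{s,0}(x-y)v(y)$. The decomposition
\[
\eta(\varepsilon)\mathsf{G}_{s,\lambda\varepsilon^s}(x-y)-\mathsf{G}_{s,0}(x-y)\;=\;(\eta(\varepsilon)-1)\mathsf{G}_{s,\lambda\varepsilon^s}(x-y)+\big[\mathsf{G}_{s,\lambda\varepsilon^s}(x-y)-\mathsf{G}_{s,0}(x-y)\big]
\]
splits the analysis in two. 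Dividing by $\lambda^{1/s-1}\varepsilon^{1-s}$, the first summand tends pointwise to $(\eta_s/\lambda^{1/s-1})\mathsf{G}_{s,0}(x-y)$ by Assumption~($\mathrm{I}^-_s$)(ii). For the second summand, since $(\lambda\varepsilon^s)^{1/s-1}=\lambda^{1/s-1}\varepsilon^{1-s}$, applying part~(i) with $\lambda$ replaced by $\lambda\varepsilon^s$ gives the pointwise limit $1/(s\sin(\pi/s))$ (this is the value that should appear in \eqref{eq:Bexpansion1D}; the exponent $3\pi$ there is an evident typo carried over from the 3D case). Multiplying by $u(x)v(y)$ reproduces, pointwise a.e., the kernel of the right-hand side of \eqref{eq:Bexpansion1D}.

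To upgrade the pointwise limit to convergence in the operator-norm (and in fact Hilbert--Schmidt) topology, I would exhibit a uniform-in-$\varepsilon$ $L^2(\mathbb{R}\times\mathbb{R})$ majorant of the kernels of $(\lambda^{1/s-1}\varepsilon^{1-s})^{-1}(B_\varepsilon^{(s)}-B_0^{(s)})$ and invoke dominated convergence. Using the decomposition \eqref{eq:sfGlambda_asympt1D}--\eqref{eq:sfGlambda_asympt-2-1D} with $\lambda$ replaced by $\lambda\varepsilon^s$, one has $\mathsf{G}_{s,\lambda\varepsilon^s}(x-y)=\Lambda_s|x-y|^{-(1-s)}+\mathsf{J}_{s,\lambda\varepsilon^s}(x-y)$ with $\|\mathsf{J}_{s,\lambda\varepsilon^s}\|_\infty$ uniformly controlled as $\varepsilon\downarrow 0$; expanding the difference quotient by a telescoping/Taylor argument in $\lambda\varepsilon^s$ and using the Rollnick condition $V\in\mathcal{R}_{s,1}$ together with $V\in L^1(\mathbb{R})$ produces the required $L^2$ domination of $|u(x)|\cdot(\cdot)\cdot|v(y)|$, entirely analogously to the 3D proof of Lemma~\ref{lem:GB_expansion}(ii). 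The main obstacle is precisely this uniform domination: in 1D the singular factor $|x-y|^{-(1-s)}$ is weaker than in 3D, but the Rollnick class $\mathcal{R}_{s,1}$ is tailored exactly to control such integrals, so the estimate goes through once the decomposition of $\mathsf{G}_{s,\lambda\varepsilon^s}$ is carefully tracked through the scaling $p\mapsto\lambda^{1/s}\varepsilon q$.
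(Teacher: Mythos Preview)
Your proposal is correct and follows essentially the same route as the paper, which simply declares the proof ``completely analogous to the 3D case'' (Lemma~\ref{lem:GB_expansion}); your part~(i) reproduces that scaling-plus-dominated-convergence argument verbatim, and your part~(ii) uses the same two-term splitting of the kernel. One small sharpening: the cleanest uniform $L^2$ majorant for the second summand comes directly from the scaling identity you already derived in part~(i), since $\big|(\mathsf{G}_{s,\lambda\varepsilon^s}-\mathsf{G}_{s,0})(x-y)\big|/(\lambda\varepsilon^s)^{1/s-1}\leqslant\frac{1}{2\pi}\int_{\mathbb{R}}|q|^{-s}(|q|^s+1)^{-1}\mathrm{d}q$ uniformly in $\varepsilon$ and $x,y$, so the majorant is just a constant times $|u(x)v(y)|\in L^2(\mathbb{R}\times\mathbb{R})$ --- no separate Taylor expansion is needed; and your observation that $3\pi/s$ in \eqref{eq:Bexpansion1D} should read $\pi/s$ is correct.
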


\begin{proof}
 Completely analogous to the proof of Lemma \ref{lem:GB_expansion} for the 3D case.
\end{proof}

We can now monitor the competing effect in $\varepsilon^{1-s}(\mathbbm{1}+B_\varepsilon^{(s)})^{-1}$ as $\varepsilon\downarrow 0$. 

\begin{lemma}\label{lem:competing_effect1D}
 Under the Assumptions $(\mathrm{I}^-_s)$ and $(\mathrm{II}^-_s)$ one has
 \begin{equation}\label{eq:central_part_limit1D}
  \lim_{\varepsilon\downarrow 0}\;\varepsilon^{1-s}\big(\mathbbm{1}+B_\varepsilon^{(s)}\big)^{-1}\;=\;\Big(\eta_s+\frac{\,|\langle v,\phi\rangle_{L^2}|^2}{\,\lambda^{\frac{1}{s}-1} s\sin\frac{\pi}{s}} \Big)^{\!-1}\:|\phi\rangle\langle\widetilde{\phi}|
 \end{equation}
 in the operator norm topology. 
\end{lemma}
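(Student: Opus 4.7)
The plan is to repeat, mutatis mutandis, the three-step argument of Lemma \ref{lem:competing_effect}, with the expansion scale $(\mu\varepsilon)^{3-s}$ of the 3D case replaced by $\lambda^{\frac{1}{s}-1}\varepsilon^{1-s}$. First I would rewrite the expansion \eqref{eq:Bexpansion1D} as $B_\varepsilon^{(s)}=B_0^{(s)}+\lambda^{\frac{1}{s}-1}\varepsilon^{1-s}\mathcal{B}^{(s)}+o(\varepsilon^{1-s})$ in the Hilbert--Schmidt norm (with $\mathcal{B}^{(s)}$ the right-hand side of \eqref{eq:Bexpansion1D}), add and subtract $\lambda^{\frac{1}{s}-1}\varepsilon^{1-s}\mathbbm{1}$ inside $\mathbbm{1}+B_\varepsilon^{(s)}$, and factor to obtain
\begin{equation*}
\lambda^{\frac{1}{s}-1}\varepsilon^{1-s}\bigl(\mathbbm{1}+B_\varepsilon^{(s)}\bigr)^{-1}=\bigl(\mathbbm{1}+\lambda^{\frac{1}{s}-1}\varepsilon^{1-s}A_\varepsilon^{-1}(\mathcal{B}^{(s)}-\mathbbm{1}+o(1))\bigr)^{-1}\lambda^{\frac{1}{s}-1}\varepsilon^{1-s}A_\varepsilon^{-1}
\end{equation*}
with $A_\varepsilon:=\mathbbm{1}+\lambda^{\frac{1}{s}-1}\varepsilon^{1-s}\mathbbm{1}+B_0^{(s)}$, exactly the form handled in step (ii) of the 3D proof.

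Second, I would invoke Kato's Laurent expansion for the resolvent of a closed operator at an isolated simple eigenvalue (here the eigenvalue $0$ of $\mathbbm{1}+B_0^{(s)}$), which yields the operator-norm limit $\lambda^{\frac{1}{s}-1}\varepsilon^{1-s}A_\varepsilon^{-1}\to P:=-|\phi\rangle\langle\widetilde\phi|$; under Assumption ($\mathrm{II}^-_s$), the identities $\widetilde\phi=(\mathrm{sign}\,V)\phi$, $(B_0^{(s)})^*\widetilde\phi=-\widetilde\phi$, and $\langle\widetilde\phi,\phi\rangle_{L^2}=-1$ identify $P$ as the Riesz projection onto $\mathrm{span}\{\phi\}$, with $P\phi=\phi$. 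Passing to the limit in the factorisation above then gives
\begin{equation*}
\lim_{\varepsilon\downarrow 0}\lambda^{\frac{1}{s}-1}\varepsilon^{1-s}\bigl(\mathbbm{1}+B_\varepsilon^{(s)}\bigr)^{-1}=\bigl(\mathbbm{1}+P(\mathcal{B}^{(s)}-\mathbbm{1})\bigr)^{-1}P
\end{equation*}
in operator norm, where the invertibility of $\mathbbm{1}+P(\mathcal{B}^{(s)}-\mathbbm{1})$ on $L^2(\mathbb{R})$ is established by the same explicit rank-two ansatz used in the 3D proof, relying on the algebraic identity $\langle\widetilde\phi,u\rangle_{L^2}=\langle\phi,v\rangle_{L^2}=\overline{\langle v,\phi\rangle_{L^2}}$.

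The final step is a direct computation: acting on $\phi$ and using $B_0^{(s)}\phi=-\phi$ shows that $(\mathbbm{1}+P(\mathcal{B}^{(s)}-\mathbbm{1}))\phi$ is a scalar multiple of $\phi$; inverting this scalar on $\mathrm{span}\{\phi\}$ and dividing by the pre-factor $\lambda^{\frac{1}{s}-1}$ reproduces the coefficient in \eqref{eq:central_part_limit1D}. The only genuine novelty compared with the 3D argument is that the outer factor $\varepsilon^{1-s}$ appearing in the resolvent identity \eqref{eq:ABCresolvent_1D_s12-1} and the internal expansion scale $\lambda^{\frac{1}{s}-1}\varepsilon^{1-s}$ of \eqref{eq:Bexpansion1D} no longer coincide as in 3D (where both were $(\mu\varepsilon)^{3-s}$): the remaining $\lambda$-dependent factor must be tracked carefully throughout, but this is bookkeeping rather than a conceptual obstacle.
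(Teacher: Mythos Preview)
Your proposal is correct and follows exactly the approach the paper intends: the paper's own proof of this lemma consists of the single sentence ``Completely analogous to the proof of Lemma~\ref{lem:competing_effect} for the 3D case,'' and you have carried out that analogy in detail, correctly identifying the replacement of the scale $(\mu\varepsilon)^{3-s}$ by $\lambda^{\frac{1}{s}-1}\varepsilon^{1-s}$ and the residual $\lambda$-bookkeeping as the only (cosmetic) difference.
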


\begin{proof}
 Completely analogous to the proof of Lemma \ref{lem:competing_effect} for the 3D case.
\end{proof}

With these preliminaries at hand, we can prove Theorem \ref{thm:shrinking1D}.

\begin{proof}[Proof of Theorem \ref{thm:shrinking1D}]
The argument is the very same as the in the proof of Theorem \ref{thm:shrinking3D} for the 3D case. Thus, the limit is the trivial one unless the potential in the approximating operators satisfy Assumptions $(\mathrm{I}^-_s)$ and $(\mathrm{II}^-_s)$, in which case, plugging the limits \eqref{eq:Aeps_limit1D}, \eqref{eq:Ceps_limit1D}, and \eqref{eq:central_part_limit1D} into \eqref{eq:ABCresolvent_1D_s12-1}, one has
\[
 \begin{split}
  \big( h_\varepsilon^{(s/2)}+\mu^s\mathbbm{1}\big)^{-1}\;\xrightarrow[]{\;\varepsilon\downarrow 0\;}\;\big(&(-\Delta)^{s/2}+\mu^s\mathbbm{1}\big)^{-1} \\
  &\qquad+\displaystyle\frac{1}{\displaystyle\:\frac{-\eta_s}{\:|\langle v,\phi\rangle_{L^2}|^2\,}-\frac{1}{\,\lambda^{1-\frac{1}{s}} s\sin\frac{3\pi}{s}} \:}\;|\mathsf{G}_{s,\mu^s}\rangle\langle \mathsf{G}_{s,\mu^s}|\,.
 \end{split}
\]
The comparison of the limit resolvent above with formulas \eqref{eq:Theta1D} and \eqref{eq:Kalpha_res_1D} shows finally that the limit resolvent is precisely $(\mathsf{k}^{(s/2)}_\alpha+\lambda\mathbbm{1})^{-1}$ where the extension parameter satisfies $\alpha=-\eta_s|\int_{\mathbb{R}}\ud x\,V(x)\psi(x)|^{-2}$, and this completes the proof.
\end{proof}

\section{Convergence of the 1D limit: resonant-independent case.}\label{sec:proof_approx_1D-nores}

This Section contains the proof of Theorem \ref{thm:shrinking1Dplus}. Thus, now $s\in(1,\frac{3}{2})$ and formulas \eqref{eq:heps-Veps1D_unified}-\eqref{eq:1DscalingProperty} must be specialised with $\gamma=2-s$.

First, we observe that with $L^1$-potentials the following operator-theoretic properties hold.

\begin{lemma}\label{lem:compactness1Dplus}
 Let $V:\mathbb{R}\to\mathbb{R}$ belong to $L^1(\mathbb{R})$ and let $s\in(1,\frac{3}{2})$. Then:
 \begin{itemize}
  \item[(i)] for every $\lambda\geqslant 0$, $|V|^{\frac{1}{2}}((-\Delta)^{\frac{s}{2}}+\lambda\mathbbm{1})^{-1}|V|^{\frac{1}{2}}$ is a Hilbert-Schmidt operator on $L^2(\mathbb{R})$;
  \item[(ii)] $|V|^{\frac{1}{2}}\ll(-\Delta)^{\frac{s}{4}}$ in the sense of infinitesimally bounded operators;
  \item[(iii)] the operator $(-\Delta)^{\frac{s}{2}}+V$ defined as a form sum is self-adjoint and $\sigma_{\mathrm{ess}}((-\Delta)^{\frac{s}{2}}+V)=[0,+\infty)$.
 \end{itemize}
\end{lemma}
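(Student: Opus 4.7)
The plan is to mirror the argument of Lemma \ref{lem:compactness} (and Lemma \ref{lem:compactness1D}), but exploit the fact that in the regime $s\in(1,\tfrac{3}{2})$ on $\mathbb{R}$ one has $s>d=1$, so the Green function $\mathsf{G}_{s,\lambda}$ is locally regular. This is precisely what allows us to drop any Rollnick-type condition on $V$ and work only with $V\in L^1(\mathbb{R})$.

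First, for (i), I would observe that $|V|^{\frac{1}{2}}((-\Delta)^{s/2}+\lambda\mathbbm{1})^{-1}|V|^{\frac{1}{2}}$ acts as an integral operator with kernel
\[
 \mathcal{K}_{s,\lambda}(x,y)\;=\;|V(x)|^{\frac{1}{2}}\mathsf{G}_{s,\lambda}(x-y)|V(y)|^{\frac{1}{2}}\,.
\]
Since $\widehat{\mathsf{G}_{s,\lambda}}(p)=(2\pi)^{-\frac{1}{2}}(|p|^s+\lambda)^{-1}$ is in $L^1(\mathbb{R})$ for $s>1$, Fourier inversion gives $\mathsf{G}_{s,\lambda}\in C_\infty(\mathbb{R})\subset L^\infty(\mathbb{R})$, with
\[
 \|\mathsf{G}_{s,\lambda}\|_{L^\infty}\;\leqslant\;\frac{1}{2\pi}\int_{\mathbb{R}}\frac{\ud p}{\,|p|^s+\lambda\,}\;=\;\frac{\lambda^{\frac{1}{s}-1}}{2\pi}\int_{\mathbb{R}}\frac{\ud q}{\,|q|^s+1\,}\,,
\]
the last integral being finite precisely because $s>1$. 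Hence
\[
 \big\||V|^{\frac{1}{2}}((-\Delta)^{s/2}+\lambda\mathbbm{1})^{-1}|V|^{\frac{1}{2}}\big\|_{\mathrm{H.S.}}^2\;=\;\iint_{\mathbb{R}\times\mathbb{R}}|\mathcal{K}_{s,\lambda}(x,y)|^2\ud x\,\ud y\;\leqslant\;\|\mathsf{G}_{s,\lambda}\|_{L^\infty}^2\|V\|_{L^1}^2\,,
\]
which is finite.

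For (ii), the crucial quantitative input is that $\|\mathsf{G}_{s,\lambda}\|_{L^\infty}\sim\lambda^{\frac{1}{s}-1}\to 0$ as $\lambda\to+\infty$, since $\frac{1}{s}-1<0$. Combined with the HS estimate above, this gives
\[
 \lim_{\lambda\to+\infty}\big\||V|^{\frac{1}{2}}((-\Delta)^{s/2}+\lambda\mathbbm{1})^{-1}|V|^{\frac{1}{2}}\big\|_{\mathrm{op}}\;=\;0\,.
\]
Rewriting the symmetrised product as $((-\Delta)^{s/2}+\lambda\mathbbm{1})^{-\frac{1}{2}}|V|((-\Delta)^{s/2}+\lambda\mathbbm{1})^{-\frac{1}{2}}$ and unfolding the operator-norm bound into a quadratic-form inequality on $H^{s/2}(\mathbb{R})$, one concludes that for every $\varepsilon>0$ there is $b_\varepsilon>0$ with
\[
 |\langle\varphi,V\varphi\rangle_{L^2}|\;\leqslant\;\varepsilon\,\langle\varphi,(-\Delta)^{s/2}\varphi\rangle_{L^2}+b_\varepsilon\|\varphi\|_{L^2}^2\qquad\forall\varphi\in H^{s/2}(\mathbb{R})\,,
\]
which is the infinitesimal form-boundedness $|V|^{\frac{1}{2}}\ll(-\Delta)^{s/4}$.

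Finally, (iii) follows by a standard two-step argument. The KLMN theorem, applied to the infinitesimal bound of (ii), produces a unique self-adjoint operator $(-\Delta)^{s/2}+V$ realised as the form sum, with form domain $H^{s/2}(\mathbb{R})$. For the essential spectrum, one writes the Konno--Kuroda resolvent identity exactly as in Lemma \ref{lem:konnokuroda} (its hypotheses being guaranteed by (i) and (ii)),
\[
\big((-\Delta)^{s/2}+V+\lambda\mathbbm{1}\big)^{-1}-\big((-\Delta)^{s/2}+\lambda\mathbbm{1}\big)^{-1}\;=\;-R_0(\lambda)\,v\,(\mathbbm{1}+uR_0(\lambda)v)^{-1}\,u\,R_0(\lambda)\,,
\]
with $R_0(\lambda):=((-\Delta)^{s/2}+\lambda\mathbbm{1})^{-1}$; the middle factor $uR_0(\lambda)v$ is Hilbert--Schmidt by (i) (as its square has the same kernel estimate), so the resolvent difference is compact, and Weyl's theorem gives $\sigma_{\mathrm{ess}}((-\Delta)^{s/2}+V)=\sigma_{\mathrm{ess}}((-\Delta)^{s/2})=[0,+\infty)$.

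The only mildly delicate step is the $L^\infty$-bound on $\mathsf{G}_{s,\lambda}$: unlike the two previous locally-singular regimes, here the decisive input is not a Rollnick condition but rather the integrability of $(|p|^s+\lambda)^{-1}$ at infinity, which is exactly the arithmetic feature distinguishing $s>d=1$ from $s\leqslant 1$. Everything else is a direct transcription of the 3D argument.
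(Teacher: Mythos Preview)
Your argument is essentially the same as the paper's: the key observation that $\widehat{\mathsf{G}_{s,\lambda}}\in L^1(\mathbb{R})$ for $s>1$, hence $\mathsf{G}_{s,\lambda}\in C_\infty(\mathbb{R})\subset L^\infty(\mathbb{R})$, so that the Hilbert--Schmidt bound only needs $V\in L^1$; the paper then simply refers back to the proof of Lemma~\ref{lem:compactness}, whereas you spell out parts (ii)--(iii) explicitly (with the quantitative $\|\mathsf{G}_{s,\lambda}\|_{L^\infty}\lesssim\lambda^{\frac{1}{s}-1}\to 0$ replacing the dominated-convergence step), but the content is the same.

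One small caveat, shared with the paper: your $L^\infty$ bound on $\mathsf{G}_{s,\lambda}$ breaks down at $\lambda=0$, since $|p|^{-s}\notin L^1(\mathbb{R})$ near the origin when $s>1$ and in fact $\mathsf{G}_{s,0}(x)=c\,|x|^{s-1}$ is unbounded. So part (i) as literally stated for $\lambda=0$ would require an extra moment condition on $V$; fortunately only the case $\lambda>0$ is ever used downstream.
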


\begin{proof} Since $s>1$, \eqref{eq:sfG1D} defines a function $\widehat{\mathsf{G}_{s,\lambda}}\in L^1(\mathbb{R})$, whence $\mathsf{G}_{s,\lambda}\in C_\infty(\mathbb{R})$ (continuous and vanishing at infinity). Therefore, the integral kernel of $|V|^{\frac{1}{2}}((-\Delta)^{\frac{s}{2}}+\lambda\mathbbm{1})^{-1}|V|^{\frac{1}{2}}$, namely $|V(x)|^{\frac{1}{2}}\mathsf{G}_{s,\lambda}(x-y)|V(y)|^{\frac{1}{2}}$, belongs to $L^2(\mathbb{R}\times\mathbb{R},\ud x\,\ud y)$, and this holds for any $\lambda\geqslant 0$. Based on this observation, the rest of the reasoning of the proof of Lemma \ref{lem:compactness} can be repeated verbatim.
\end{proof}

Following again the general argument of \cite[Theorem B.1(b)]{albeverio-solvable}, Lemma \ref{lem:compactness1Dplus} justifies the validity of the resolvent identity \eqref{eq:KK_1D}, and hence of the rescaled identity \eqref{eq:ABCresolvent_1D}, that now reads
 \begin{equation}\label{eq:ABCresolvent_1D_s1-32}
  \big( h_\varepsilon^{(s/2)}+\lambda\mathbbm{1}\big)^{-1}=\;\big((-\Delta)^{s/2}+\lambda\mathbbm{1}\big)^{-1}-\eta(\varepsilon)\,A_\varepsilon^{(s)}(\mathbbm{1}+B_\varepsilon^{(s)})^{-1} C_\varepsilon^{(s)}
\end{equation}
for every $\varepsilon>0$ and every $-\lambda<0$ in the resolvent set of $ h_\varepsilon^{(s/2)}$.

\begin{lemma}\label{lem:ABClimits1Dplus}
 Let $V$ and $\eta$ satisfy Assumption $(\mathrm{I}^+_s)$ for some $s\in(1,\frac{3}{2})$. For every $\varepsilon>0$, the operators $A_\varepsilon^{(s)}$, $B_\varepsilon^{(s)}$, and $C_\varepsilon^{(s)}$ defined by \eqref{eq:heps-Veps1D_unified}-\eqref{eq:veps_ueps1D} and \eqref{eq:ABCops_1D} with $\gamma=2-s$ are Hilbert-Schmidt operators on $L^2(\mathbb{R})$ with limit
 \begin{eqnarray}
     \lim_{\varepsilon\downarrow 0}A_\varepsilon^{(s)}\!\!&=&\!\!|\mathsf{G}_{s,\lambda}\rangle\langle v|  \label{eq:Aeps_limit1Dplus}\\
        \lim_{\varepsilon\downarrow 0}B_\varepsilon^{(s)}\!\!&=&\!\!B_0^{(s)}\;=\;\frac{\eta(0)}{\,\lambda^{1-\frac{1}{s}}s\,\sin\frac{\pi}{s}}\,|u\rangle\langle v| \label{eq:Beps_limit1Dplus} \\
   \lim_{\varepsilon\downarrow 0}C_\varepsilon^{(s)}\!\!&=&\!\!|u\rangle\langle \mathsf{G}_{s,\lambda}| \label{eq:Ceps_limit1Dplus}
 \end{eqnarray}
 in the Hilbert-Schmidt operator norm. 
\end{lemma}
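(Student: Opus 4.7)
The strategy is to adapt the proof of Lemma \ref{lem:ABClimits} (with the 1D variant of Lemma \ref{lem:ABClimits1D} serving as the immediate template), the only structural novelty being the treatment of the operator $B_\varepsilon^{(s)}$, whose limit is now a rank-one operator rather than $u(-\Delta)^{-s/2}v$. This reflects the regularity of the Green function in the regime $s\in(1,\frac{3}{2})$, already invoked in the proof of Lemma \ref{lem:compactness1Dplus}: for such $s$ one has $\mathsf{G}_{s,\lambda}\in C_\infty(\mathbb{R})\cap L^2(\mathbb{R})$ (the latter because $(|p|^s+\lambda)^{-1}\in L^2(\mathbb{R})$ for $s>\frac{1}{2}$).

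First I would compute the integral kernels explicitly. Specialising \eqref{eq:heps-Veps1D_unified}--\eqref{eq:veps_ueps1D} and \eqref{eq:ABCops_1D} to $\gamma=2-s$ (so that $(2-s-\gamma)/2=0$ and $(s-\gamma)/2=s-1$), a change of variables yields
\[
A_\varepsilon^{(s)}(x,y)\,=\,\mathsf{G}_{s,\lambda}(x-\varepsilon y)\,v(y),\qquad C_\varepsilon^{(s)}(x,y)\,=\,u(x)\,\mathsf{G}_{s,\lambda}(\varepsilon x-y),
\]
and, crucially,
\[
B_\varepsilon^{(s)}(x,y)\,=\,\eta(\varepsilon)\,\varepsilon^{s-1}\,u(x)\,\mathsf{G}_{s,\lambda\varepsilon^s}(x-y)\,v(y)\,=\,\eta(\varepsilon)\,u(x)\,\mathsf{G}_{s,\lambda}\bigl(\varepsilon(x-y)\bigr)\,v(y),
\]
where in the last step I have applied the scaling identity \eqref{eq:1DscalingProperty} with $\gamma=2-s$. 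This absorption of the $\varepsilon^{s-1}$ prefactor is the central computational point: it converts $B_\varepsilon^{(s)}$ into a kernel whose $\varepsilon\downarrow 0$ behaviour is governed by the value $\mathsf{G}_{s,\lambda}(0)$, which is finite precisely because $s>1$.

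Next I would check that all three kernels lie in $L^2(\mathbb{R}\times\mathbb{R})$ uniformly in $\varepsilon$. For $A_\varepsilon^{(s)}$ and $C_\varepsilon^{(s)}$ a translation in one variable and $V\in L^1(\mathbb{R})$ give
\[
\|A_\varepsilon^{(s)}\|_{\mathrm{H.S.}}^2\,=\,\|\mathsf{G}_{s,\lambda}\|_{L^2}^2\,\|V\|_{L^1},
\]
and likewise for $C_\varepsilon^{(s)}$. For $B_\varepsilon^{(s)}$ the bound $|\mathsf{G}_{s,\lambda}(\varepsilon(x-y))|\leqslant\|\mathsf{G}_{s,\lambda}\|_{L^\infty}$ produces
\[
\|B_\varepsilon^{(s)}\|_{\mathrm{H.S.}}^2\,\leqslant\,\eta(\varepsilon)^2\,\|\mathsf{G}_{s,\lambda}\|_{L^\infty}^2\,\|V\|_{L^1}^2,
\]
again uniform in $\varepsilon$. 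In particular $A_\varepsilon^{(s)}, B_\varepsilon^{(s)}, C_\varepsilon^{(s)}$ are Hilbert--Schmidt on $L^2(\mathbb{R})$.

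The pointwise (a.e.) limits then follow directly: $\mathsf{G}_{s,\lambda}(x-\varepsilon y)\to\mathsf{G}_{s,\lambda}(x)$, $\mathsf{G}_{s,\lambda}(\varepsilon x-y)\to\mathsf{G}_{s,\lambda}(-y)=\mathsf{G}_{s,\lambda}(y)$, and $\mathsf{G}_{s,\lambda}(\varepsilon(x-y))\to\mathsf{G}_{s,\lambda}(0)$, the last one by continuity of $\mathsf{G}_{s,\lambda}$ at the origin. A short computation with the Fourier representation \eqref{eq:sfG1D}, rescaling $p=\lambda^{1/s}q$ and using the standard integral $\int_0^\infty(q^s+1)^{-1}\,\mathrm{d}q=\pi/(s\sin(\pi/s))$, gives
\[
\mathsf{G}_{s,\lambda}(0)\,=\,\frac{1}{\,\lambda^{1-\frac{1}{s}}\,s\,\sin\frac{\pi}{s}\,}\,,
\]
which matches the constant appearing in \eqref{eq:Beps_limit1Dplus}. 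Invoking dominated convergence with the $L^2(\mathbb{R}\times\mathbb{R})$-dominants $|\mathsf{G}_{s,\lambda}(x)|\,|v(y)|$ (and its symmetric analogue) and $\|\mathsf{G}_{s,\lambda}\|_{L^\infty}|u(x)||v(y)|$ respectively, I obtain both the weak operator convergence to the claimed limits and the convergence of Hilbert--Schmidt norms. As in the proof of Lemma \ref{lem:ABClimits}, the criterion \cite[Theorem 2.21]{simon_trace_ideals} then upgrades this to convergence in the Hilbert--Schmidt norm.

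The only subtle point is the evaluation of $\mathsf{G}_{s,\lambda}(0)$ and the verification that the scaling identity \eqref{eq:1DscalingProperty} collapses $\varepsilon^{s-1}\mathsf{G}_{s,\lambda\varepsilon^s}$ to $\mathsf{G}_{s,\lambda}(\varepsilon\,\cdot)$, which is what removes the $s$-dependence from the scaling exponent of $V_\varepsilon$ (here $V_\varepsilon(x)=\varepsilon^{-1}\eta(\varepsilon)V(x/\varepsilon)$) and renders the limit resonance-\emph{independent}. Once this identity is used, no further resonance-type spectral analysis is required, and the lemma follows.
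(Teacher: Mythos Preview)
Your proof is correct and follows essentially the same approach as the paper's: compute the kernels, use the scaling identity \eqref{eq:1DscalingProperty} to rewrite $B_\varepsilon^{(s)}$, invoke the finiteness and explicit value of $\mathsf{G}_{s,\lambda}(0)$, and conclude via the weak-plus-norm argument and \cite[Theorem 2.21]{simon_trace_ideals} exactly as in Lemma~\ref{lem:ABClimits}. The only minor imprecision is that $|\mathsf{G}_{s,\lambda}(x)|\,|v(y)|$ is not a pointwise dominant for $|\mathsf{G}_{s,\lambda}(x-\varepsilon y)|\,|v(y)|$; replace it by $\|\mathsf{G}_{s,\lambda}\|_{L^\infty}|v(y)|$ (available since $s>1$) or argue with $C^\infty_0$ test functions as in the proof of Lemma~\ref{lem:ABClimits}, and the scheme goes through unchanged.
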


\begin{proof}
The integral kernels are now
 \[
  \begin{split}
   A_\varepsilon^{(s)}(x,y)\;&=\;\mathsf{G}_{s,\lambda}(x-\varepsilon y) \,v(y) \\
   B_\varepsilon^{(s)}(x,y)\;&=\;\eta(\varepsilon)\,\varepsilon^{s-1}\,u(x)\,\mathsf{G}_{s,\lambda\varepsilon^s}(x-y)\,v(y)  \\
   C_\varepsilon^{(s)}(x,y)\;&=\;u(x)\,\mathsf{G}_{s,\lambda}(\varepsilon x-y)\,.
  \end{split}
 \]
For $A_\varepsilon^{(s)}$ and $C_\varepsilon^{(s)}$ we reason precisely as in the proof of Lemma \ref{lem:ABClimits}. $B_\varepsilon^{(s)}$ is Hilbert-Schmidt as a consequence of Lemma \ref{lem:compactness1Dplus}. Re-writing
\[
 B_\varepsilon^{(s)}(x,y)\;=\;\eta(\varepsilon)\,u(x)\,\mathsf{G}_{s,\lambda}(\varepsilon x-\varepsilon y)\,v(y)
\]
by means of \eqref{eq:1DscalingProperty}, and observing that \eqref{eq:sfG1D} implies
\[
 \mathsf{G}_{s,\lambda}(\varepsilon x-\varepsilon y)\;\xrightarrow[]{\;\varepsilon\downarrow 0\;}\;\mathsf{G}_{s,\lambda}(0)\;=\;\frac{1}{\,\lambda^{1-\frac{1}{s}}s\,\sin\frac{\pi}{s}}\,,
\]
one deduces
\[
  B_\varepsilon^{(s)}(x,y)\;\xrightarrow[]{\;\varepsilon\downarrow 0\;}\;\frac{\eta(0)}{\,\lambda^{1-\frac{1}{s}}s\,\sin\frac{\pi}{s}} u(x)v(y)\,.
\]
Then a dominated convergence argument, analogous to that used in the proof of Lemma \ref{lem:ABClimits}, proves \eqref{eq:Beps_limit1Dplus}.
\end{proof}

It is now convenient to observe the following (see \cite[Lemma 5.1]{DM-2015-halfline} for an analogous argument).

\begin{lemma}
 Assume that the data $s\in(1,\frac{3}{2})$, $\lambda>0$ with $-\lambda$ in the resolvent set of all the $ h_\varepsilon^{(s/2)}$'s, and $V$ and $\eta$ matching Assumption $(\mathrm{I}^+_s)$, do \emph{not} satisfy the exceptional relation
 \begin{equation}\label{eq:except}
 1+\frac{\eta(0)}{\,\lambda^{1-\frac{1}{s}}s\,\sin\frac{\pi}{s}}\int_{\mathbb{R}}\!\ud x V(x)\;=\;0\,.
 \end{equation}
Then the operator $\mathbbm{1}+B_0^{(s)}$ is invertible with bounded inverse, everywhere defined on $L^2(\mathbb{R})$. 
\end{lemma}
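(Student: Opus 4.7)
The plan is to exploit the very explicit form of $B_0^{(s)}$ given by \eqref{eq:Beps_limit1Dplus}, namely that $B_0^{(s)}$ is a rank-one operator, so $\mathbbm{1}+B_0^{(s)}$ is a rank-one perturbation of the identity and can be inverted by hand via the Sherman--Morrison formula.

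First I would note that Lemma \ref{lem:ABClimits1Dplus} gives
\[
 B_0^{(s)}\;=\;c_{s,\lambda}\,|u\rangle\langle v|\,,\qquad c_{s,\lambda}\;:=\;\frac{\eta(0)}{\,\lambda^{1-\frac{1}{s}}\,s\,\sin\frac{\pi}{s}\,}\,,
\]
where $v=|V|^{1/2}$ and $u=|V|^{1/2}\mathrm{sign}(V)$ both belong to $L^2(\mathbb{R})$ because $V\in L^1(\mathbb{R})$ by Assumption (I$^+_s$). Consequently $|u\rangle\langle v|$ is a bounded (in fact Hilbert--Schmidt, hence compact) rank-one operator on $L^2(\mathbb{R})$. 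Compute
\[
 \langle v,u\rangle_{L^2}\;=\;\int_{\mathbb{R}}|V(x)|^{\frac{1}{2}}\,|V(x)|^{\frac{1}{2}}\mathrm{sign}(V(x))\,\ud x\;=\;\int_{\mathbb{R}}V(x)\,\ud x\,,
\]
so the scalar $1+c_{s,\lambda}\langle v,u\rangle_{L^2}$ is exactly the left-hand side of the exceptional relation \eqref{eq:except}, which by hypothesis is non-zero.

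Next I would appeal to the Sherman--Morrison identity for rank-one perturbations of the identity: whenever $1+c\langle v,u\rangle_{L^2}\neq 0$, the operator $\mathbbm{1}+c\,|u\rangle\langle v|$ is a bijection on $L^2(\mathbb{R})$ with bounded inverse given explicitly by
\[
 \big(\mathbbm{1}+c\,|u\rangle\langle v|\big)^{\!-1}\;=\;\mathbbm{1}-\frac{c}{\,1+c\langle v,u\rangle_{L^2}\,}\,|u\rangle\langle v|\,.
\]
A direct computation (multiplying out on the left and on the right) verifies this identity; boundedness is then transparent because the right-hand side is $\mathbbm{1}$ plus a finite multiple of a bounded rank-one operator, and the formula shows the inverse is everywhere defined. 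Applying this with $c=c_{s,\lambda}$ yields the invertibility of $\mathbbm{1}+B_0^{(s)}$ claimed in the lemma.

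There is essentially no analytical obstacle here: the whole content of the lemma is the observation that $B_0^{(s)}$ degenerated to a rank-one operator in the limit $\varepsilon\downarrow 0$, so the Fredholm alternative collapses to the scalar condition \eqref{eq:except}. The only minor point to be careful about is that $|u\rangle\langle v|$ is genuinely bounded as an operator $L^2(\mathbb{R})\to L^2(\mathbb{R})$, which is immediate from $u,v\in L^2(\mathbb{R})$ with $\||u\rangle\langle v|\|_{\mathrm{op}}\leqslant \|u\|_{L^2}\|v\|_{L^2}=\|V\|_{L^1}$.
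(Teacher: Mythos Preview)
Your proof is correct and follows essentially the same route as the paper: both recognise that $B_0^{(s)}$ is rank one and reduce invertibility of $\mathbbm{1}+B_0^{(s)}$ to the scalar condition $1+c_{s,\lambda}\langle v,u\rangle_{L^2}\neq 0$, which is precisely the negation of \eqref{eq:except}. The only cosmetic difference is that the paper phrases the argument via the Fredholm alternative (showing that $-1$ is an eigenvalue of $B_0^{(s)}$ if and only if \eqref{eq:except} holds), whereas you write down the inverse explicitly via the Sherman--Morrison formula; your version has the minor bonus of exhibiting $(\mathbbm{1}+B_0^{(s)})^{-1}$ in closed form.
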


\begin{proof}
 Since $B_0^{(s)}$ is compact on $L^2(\mathbb{R})$, based on the Fredholm alternative we have to prove that the validity of \eqref{eq:except} is equivalent to $B_0^{(s)}$ having eigenvalue $-1$. In fact, $B_0^{(s)}\phi=-\phi$ for some non-zero $\phi\in L^2(\mathbb{R})$ is the same as
 \[
  \phi\;=\;\frac{\eta(0)}{\,\lambda^{1-\frac{1}{s}}s\,\sin\frac{\pi}{s}}\,\langle v,\phi\rangle_{L^2}\,u\,,
 \]
meaning that $\phi$ is not orthogonal to $v$ and $\phi$ is a multiple of $u$. When this is the case, $u$ itself must be an eigenfunction of $B_0^{(s)}$ with eigenvalue $-1$, and this is tantamount, owing to the identity above, as the validity of \eqref{eq:except}. 
\end{proof}

For given $s$, $\eta$, and $V$, the exceptional value of $-\lambda$ satisfying \eqref{eq:except} is going to correspond to the negative eigenvalue of $\mathsf{k}^{(s/2)}_\alpha$ described in Theorem \ref{thm:Kalpha1D}(iv). As we are going to monitor the limit  $h_\varepsilon^{(s/2)}\xrightarrow[]{\;\varepsilon\downarrow 0\;}\mathsf{k}^{(s/2)}_\alpha$ in the \emph{resolvent sense}, not only must we discard the spectral points 
$-\lambda$ not belonging to the resolvent set of all the $ h_\varepsilon^{(s/2)}$'s, but also the point $-\lambda$ given by \eqref{eq:except}. Thus, for our purposes the operator $\mathbbm{1}+B_0^{(s)}$ is always invertible with everywhere defined bounded inverse.

In particular, \eqref{eq:Beps_limit1Dplus} implies
\begin{equation}\label{eq:1Blimit}
 (\mathbbm{1}+B_\varepsilon^{(s)})^{-1}\;\xrightarrow[]{\;\varepsilon\downarrow 0\;}\;(\mathbbm{1}+B_0^{(s)})^{-1}
\end{equation}
in the operator norm.

Based on the preceding preparatory materials, we can now prove Theorem \ref{thm:shrinking1Dplus}.

\begin{proof}[Proof of Theorem \ref{thm:shrinking1Dplus}]
 Since \eqref{eq:except} is excluded and therefore
 \[
  (\mathbbm{1}+B_0^{(s)})^{-1} u\;=\;\Big(1+\frac{\,\eta(0)\int_{\mathbb{R}}\ud x\,V(x)\,}{\,\lambda^{1-\frac{1}{s}}s\,\sin\frac{\pi}{s}}\Big)^{\!-1}u\,,
 \]
then plugging the limits \eqref{eq:Aeps_limit1Dplus}, \eqref{eq:Ceps_limit1Dplus}, and \eqref{eq:1Blimit} into \eqref{eq:ABCresolvent_1D_s1-32} yields
\[
 \begin{split}
  \big( h_\varepsilon^{(s/2)}+\lambda\mathbbm{1}\big)^{-1}\;\xrightarrow[]{\;\varepsilon\downarrow 0\;}\;\big((-\Delta)^{s/2}+\lambda\mathbbm{1}\big)^{-1}-\frac{\eta(0)\int_{\mathbb{R}}\ud x\,V(x)}{1+\frac{\,\eta(0)\int_{\mathbb{R}}\ud x\,V(x)\,}{\,\lambda^{1-\frac{1}{s}}s\,\sin\frac{\pi}{s}}}\,|\mathsf{G}_{s,\lambda}\rangle\langle \mathsf{G}_{s,\lambda}|
 \end{split}
\]
 in the operator norm. Upon setting 
 \[
  \alpha\;:=\;-\Big(\eta(0)\!\int_{\mathbb{R}}\ud x\,V(x)\Big)^{\!-1}
 \]
 and comparing the resulting expression with \eqref{eq:Theta1D} and \eqref{eq:Kalpha_res_1D}, one finds
 \[
 \begin{split}
  \big( h_\varepsilon^{(s/2)}+\lambda\mathbbm{1}\big)^{-1}\;\xrightarrow[]{\;\varepsilon\downarrow 0\;}&\;\big((-\Delta)^{s/2}+\lambda\mathbbm{1}\big)^{-1}+\frac{1}{\alpha-\frac{1}{\,\lambda^{1-\frac{1}{s}}s\,\sin\frac{\pi}{s}}}\,|\mathsf{G}_{s,\lambda}\rangle\langle \mathsf{G}_{s,\lambda}| \\
  &\;=\; (\mathsf{k}^{(s/2)}_\alpha+\lambda\mathbbm{1})^{-1}\,,
 \end{split}
 \]
which completes the proof. 
\end{proof}

\section{Zero-energy resonances for Schr\"{o}dinger operators \\ with fractional Laplacian}\label{sec:zero_en_res}

The purpose of this Section is two-fold. First, we prove Theorems \ref{thm:resonance} and \ref{thm:resonance1D}, concerning the characterisation of the zero-energy resonant behaviour of $(-\Delta)^{s/2}+V$. Then, we discuss the occurrence of zero-energy resonances, both in one and three dimension.

\begin{proof}[Proof of Theorem \ref{thm:resonance}]
 The fact that for a real-valued $V\in L^1(\mathbb{R}^3)\cap\mathcal{R}_{s,3}$ the operator $u(-\Delta)^{\frac{s}{2}}v$ is Hilbert-Schmidt follows from Lemma \ref{lem:compactness}(i), thus part (i) is proved. 
 
 Let us split
 \[\tag{a}\label{eq:split}
  \begin{split}
   \psi(x)\;&=\;((-\Delta)^{-\frac{s}{2}} v\phi)(x)\;=\;\int_{\mathbb{R}^3}\ud y\,\frac{\Lambda_s}{\:|x-y|^{3-s}}\,v(y)\phi(y) \\
   &=\;\frac{\,\Lambda_s\langle v,\phi\rangle_{L^2}}{\;|x|^{3-s}}+\Lambda_s\int_{\mathbb{R}^3}\ud y\,\Big(\frac{1}{\:|x-y|^{3-s}}-\frac{1}{\:|x|^{3-s}}\Big)v(y)\phi(y) \\
   &\equiv\;\frac{\,\Lambda_s\langle v,\phi\rangle_{L^2}}{\;|x|^{3-s}}+\psi_1(x)\,,
  \end{split}
 \]
 where $\Lambda_s$ is the constant defined in \eqref{eq:sfGlambda_asympt-2}. We now see that $\psi_1\in L^2(\mathbb{R}^3)$. To this aim, we observe that setting $\widehat{y}:=\frac{y}{|y|}$ one has
\begin{equation*}
\begin{split}
\int_{\R^3}\ud x&\,\Big(\frac{1}{\:|x-y|^{3-s}}-\frac{1}{\:|x|^{3-s}}\Big)^2\\
&=\;|y|^{2s-3}\!\int_{\R^3}\ud x\Big(\frac{|x-\widehat{y}\,|^{3-s}-|x|^{3-s}}{|x-\widehat{y}\,|^{3-s}|x|^{3-s}}\Big)^2\\
&\lesssim\;\! |y|^{2s-3}\int_{\R^3}\ud x\,\Big(\frac{\langle x\rangle^{2-s}}{|x-\widehat{y}\,|^{3-s}|x|^{3-s}}\Big)^2,\\
\end{split}
\end{equation*}
having used the change of variable $x\mapsto|y|\,x$ in the first step and the uniform bound $\big||x-\widehat{y}\,|^{3-s}-|x|^{3-s}\big|\lesssim\langle x\rangle^{2-s}$ in the last step. Since $s\in(\frac32,\frac52)$, the last integral above is finite, thus we deduce
\begin{equation*}\tag{b}\label{eq:moredecay}
\int_{\R^3}\ud x\,\Big(\frac{1}{\:|x-y|^{3-s}}-\frac{1}{\:|x|^{3-s}}\Big)^2\;\lesssim\; |y|^{2s-3}\,.
\end{equation*}
As a consequence,
\begin{equation*}
\begin{split}
\|\psi_1\|^2_{L^2(\R^3)}\;&\lesssim\; \int_{\R^3}\ud x\,\Big|\int_{\mathbb{R}^3}\ud y\,\Big(\frac{1}{\:|x-y|^{3-s}}-\frac{1}{\:|x|^{3-s}}\Big)v(y)\phi(y)\Big|^2\\
&\leqslant\;  \iint_{\R^3 \times\R^3}\ud x\,\ud y\,\Big(\frac{1}{\:|x-y|^{3-s}}-\frac{1}{\:|x|^{3-s}}\Big)^2|V(y)|\\
&\lesssim\; \int_{\R^3}\,\ud y\,|V(y)|\,|y|^{2s-3}\;<\; +\infty\,,
\end{split}
\end{equation*}
as follows from a Cauchy-Schwartz inequality in the second step, from the bound \eqref{eq:moredecay} in the third step, and from the assumption $V\in L^1(\mathbb{R}^3,\langle x\rangle^{2s-3}\ud x)$ in the last step.
 
 Since $|x|^{-(3-s)}\in L^2_\mathrm{loc}(\mathbb{R}^3)$, because $s>\frac{3}{2}$, then identity \eqref{eq:split} implies that $\psi\in L^2_\mathrm{loc}(\mathbb{R}^3)$. Moreover, from \eqref{eq:phi} and \eqref{eq:psi} one finds
 \[
  V\psi\;=\;v u (-\Delta)^{-\frac{s}{2}} v \phi\;=\;-v\phi\;=\;-(-\Delta)^{\frac{s}{2}}\psi\,,
 \]
 whence $((-\Delta)^{\frac{s}{2}}+V)\psi=0$ distributionally. This completes the proof of part (ii).

 Using \eqref{eq:psi} and the distributional identity proved in part (ii) one finds
 \[
  \langle v,\phi\rangle_{L^2}\;=\;\int_{\mathbb{R}^3}\ud x\,v(x)\,\phi(x)\;=\;\int_{\mathbb{R}^3}\ud x\,((-\Delta)^{\frac{s}{2}}\psi)(x)\;=\;-\int_{\mathbb{R}^3}\ud x\,V(x)\,\psi(x)\,,
 \]
 which proves part (iii).

 Last, the identity \eqref{eq:split} also implies that $\psi\in L^2(\mathbb{R}^3)$ is equivalent to $\langle v,\phi\rangle_{L^2}=0$. When this is the case, the identity $((-\Delta)^{\frac{s}{2}}+V)\psi=0$ holds in the $L^2$-sense, implying that $\psi\in\mathcal{D}((-\Delta)^{\frac{s}{2}}+V)$. This completes the proof of part (iv). 
\end{proof}

The proof of Theorem \ref{thm:resonance1D} proceeds along the same line.

\begin{proof}[Proof of Theorem \ref{thm:resonance1D}]
Part (i) follows from Lemma \ref{lem:compactness1D}(i).

Splitting
 \[\tag{*}
  \begin{split}
   \psi(x)\;&=\;((-\Delta)^{-\frac{s}{2}} v\phi)(x)\;=\;\frac{\,\Lambda_s\langle v,\phi\rangle_{L^2}}{\;|x|^{1-s}}+\psi_1(x)\,,
  \end{split}
 \]
where now $\Lambda_s$ is the constant defined in \eqref{eq:sfGlambda_asympt-2-1D}, and using the assumptions $V\in L^1(\mathbb{R},\langle x\rangle^{2s-1}\ud x)$ we can show that $\psi_1\in L^2(\mathbb{R})$, following the same argument as in the above proof of Theorem \ref{thm:resonance}. Therefore, since $|x|^{-(1-s)}\in L^2_\mathrm{loc}(\mathbb{R})$ because $s>\frac{1}{2}$, from (*) one deduces that $\psi\in L^2_\mathrm{loc}(\mathbb{R})$. Moreover, from \eqref{eq:phi1D} and \eqref{eq:psi1D} one finds $-V\psi\;=\;(-\Delta)^{\frac{s}{2}}\psi$, whence $((-\Delta)^{\frac{s}{2}}+V)\psi=0$ distributionally. Thus, part (ii) is proved.

 Next, \eqref{eq:psi1D} and the distributional identity of part (ii) imply
 \[
  \langle v,\phi\rangle_{L^2}\;=-\int_{\mathbb{R}}\ud x\,V(x)\,\psi(x)\,,
 \]
 which proves part (iii).

 Last, the identity (*) also implies that $\psi\in L^2(\mathbb{R})$ is equivalent to $\langle v,\phi\rangle_{L^2}=0$, in which case $((-\Delta)^{\frac{s}{2}}+V)\psi=0$ in the $L^2$-sense, and this implies that $\psi\in\mathcal{D}((-\Delta)^{\frac{s}{2}}+V)$. Thus, part (iv) is proved.
\end{proof}

Let us address now the question of the existence of a potential $V$ such that the fractional Schr\"{o}dinger operator $(-\Delta)^{s/2} + V$ is zero-energy resonant.

\begin{proposition}\label{prop:exist_res}
Let $\theta\in\mathcal{S}(\R^3)$, with $\theta>0$. Define
\[
\psi\;:=\;\theta*\mathsf{G}_{s,0}\;=\;\theta*\frac{\Lambda_s}{\,|x|^{3-s}}\,,\qquad V\;:=\;-\frac{\theta}{\psi}\,,
\]
where $\Lambda_s$ is the constant defined in \eqref{eq:sfGlambda_asympt-2}.
Then $V$ satisfies part $(\mathrm{i})$ of Assumption $(\mathrm{I}_s)$, and $(-\Delta)^{s/2}+V$ is zero-energy resonant on $L^2(\mathbb{R}^3)$, with zero-energy resonance $\psi$.
\end{proposition}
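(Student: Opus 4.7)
The core observation is that by the defining property of the Green function, $(-\Delta)^{s/2}\psi = \theta$ in the sense of distributions, whence
\[
((-\Delta)^{s/2}+V)\psi \;=\; \theta + V\psi \;=\; \theta - \frac{\theta}{\psi}\cdot\psi \;=\; 0.
\]
The plan is to verify that the ansatz $(V,\psi)$ falls within the framework of Theorem \ref{thm:resonance} by exhibiting an $L^2$-eigenfunction $\phi$ of $u(-\Delta)^{-s/2}v$ at eigenvalue $-1$ whose associated function \eqref{eq:psi} is exactly the above $\psi$, and then reading off the non-$L^2$ character of $\psi$ from part (iv) of that theorem.

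Since $\theta\in\mathcal{S}(\mathbb{R}^3)$ is strictly positive and $\mathsf{G}_{s,0}(x)=\Lambda_s|x|^{-(3-s)}$ is positive, locally integrable, and vanishes at infinity, the convolution $\psi=\theta\ast\mathsf{G}_{s,0}$ is a smooth, strictly positive function on $\mathbb{R}^3$, bounded (split the convolution at $|x-y|=1$) and with behaviour at infinity $\psi(x)\sim\Lambda_s\bigl(\int_{\mathbb{R}^3}\theta\bigr)\,|x|^{-(3-s)}$ as $|x|\to\infty$. Consequently $V=-\theta/\psi$ is smooth and decays faster than any polynomial at infinity, which in particular yields $V\in L^1(\mathbb{R}^3,\langle x\rangle^{2s-3}\ud x)$. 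For the condition $V\in\mathcal{R}_{s,3}$, one splits the defining double integral at $|x-y|=1$: the outer region contributes at most $\|V\|_{L^1}^2$, while the inner region is bounded by $\|V\|_{L^\infty}\|V\|_{L^1}\int_{|z|<1}|z|^{-2(3-s)}\ud z$, this last integral being finite precisely because $2(3-s)<3$, i.e.~because $s>3/2$.

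Setting $\phi:=\sqrt{\theta\psi}$ one has $\phi\in L^2(\mathbb{R}^3)\setminus\{0\}$, since $\phi>0$ and $\phi^2\leq\|\psi\|_{L^\infty}\theta\in L^1$. Writing $v=|V|^{1/2}=\sqrt{\theta/\psi}$ and $u=-v$ (as $V<0$ everywhere), one finds $v\phi=\theta$, and hence
\[
u(-\Delta)^{-s/2}v\phi \;=\; -v\,(-\Delta)^{-s/2}\theta \;=\; -v\psi \;=\; -\sqrt{\theta\psi}\;=\; -\phi,
\]
which is exactly the eigenvalue equation \eqref{eq:phi}; simultaneously $(-\Delta)^{-s/2}v\phi=\psi$, so the given $\psi$ is precisely the function constructed from $\phi$ via \eqref{eq:psi}. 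Theorem \ref{thm:resonance}(ii) then gives $\psi\in L^2_{\mathrm{loc}}(\mathbb{R}^3)$ and $((-\Delta)^{s/2}+V)\psi=0$ distributionally, while $\langle v,\phi\rangle_{L^2}=\int_{\mathbb{R}^3}\theta>0$ together with part (iv) of the same theorem forces $\psi\notin L^2(\mathbb{R}^3)$. Hence $\psi$ is a zero-energy resonance for $(-\Delta)^{s/2}+V$. The argument is essentially a tautological rearrangement of $V\psi=-\theta$ into the Birman--Schwinger factorisation of Theorem \ref{thm:resonance}; the only mildly technical step is the verification $V\in\mathcal{R}_{s,3}$, which is where the dimension-dependent restriction $s>3/2$ enters decisively.
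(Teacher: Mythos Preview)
Your proof is correct and follows essentially the same route as the paper: the candidate $\phi=\sqrt{\theta\psi}$ is exactly the paper's $\phi=-u\psi$ written out explicitly (since $u=-\sqrt{\theta/\psi}$), and the Birman--Schwinger verification is identical. Two small differences are worth noting. First, you delegate the conclusion $\psi\notin L^2$ to Theorem~\ref{thm:resonance}(iv) via $\langle v,\phi\rangle=\int\theta>0$, whereas the paper argues directly on the Fourier side that $\widehat\psi(p)=\widehat\theta(p)|p|^{-s}\notin L^2$; your route is slightly more economical since it reuses machinery already in place. Second, your $\mathcal{R}_{s,3}$ check via the elementary $L^\infty$--$L^1$ split at $|x-y|=1$ replaces the paper's appeal to the bound $\|V\|_{\mathcal{R}_{s,3}}\lesssim\|V\|_{L^{3/s}}$; both work equally well here.

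The one point where you are terser than the paper is the assertion that $\psi(x)\sim\Lambda_s\bigl(\int\theta\bigr)|x|^{-(3-s)}$ at infinity, which you state without justification while the paper devotes a page of integral estimates to it. In fact you only need the much weaker statement that $1/\psi$ has at most polynomial growth, and for that a one-line lower bound suffices: since $\theta\geqslant c_R>0$ on some ball $B_R$, for $|x|\geqslant 2R$ one has $\psi(x)\geqslant c_R\Lambda_s\int_{B_R}|x-y|^{-(3-s)}\,\ud y\gtrsim|x|^{-(3-s)}$. Adding this sentence would close the only expository gap.
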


\begin{proof}
By construction $\psi>0$, being the convolution of two strictly positive functions. Moreover, from 
\[
 \widehat{\psi}(p)\;=\;\frac{\,\widehat{\theta}(p)}{|p|^s}
\]
one sees that $\psi$ is continuous, as $\widehat{\psi}\in L^1(\R^3)$, and that $\psi\notin L^2(\mathbb{R}^3)$, as $\widehat{\psi}\notin L^2(\mathbb{R}^3)$ either. Still, for every compact $K\subset \mathbb{R}^3$
\[
 \|(\theta*\mathsf{G}_{s,0})\mathbf{1}_K\|_{L^2}\;\leqslant\;\|\theta\|_{L^\frac{6}{5}}\|\mathsf{G}_{s,0}\|_{L^2}\|\mathbf{1}_K\|_{L^6}\;<\;+\infty\,,
\]
as follows by means of a Schwartz and a Young inequality. 
Thus,
\[\tag{i}
 \psi\;\in\; L^2_{\mathrm{loc}}(\mathbb{R}^3)\setminus L^2(\mathbb{R}^3)\,.
\]

%

Next, we argue that the leading decay in $\psi$ is $|x|^{-(3-s)}$. To see that, since $\theta(x)\lesssim\langle x\rangle^{-m}$ for any $m\in\mathbb{N}$, we write
\[\tag{ii}
 \begin{split}
  \psi(x)\;&\lesssim\;\int_{\mathbb{R}^3}\frac{1}{\langle x-y\rangle^m}\,\frac{1}{\:|y|^{3-s}}\,\ud y \\
  &=\;\int_{|y-x|\geqslant\frac{1}{2}|x|}\frac{1}{\langle y-x\rangle^m}\,\frac{1}{\:|y|^{3-s}}\,\ud y+\int_{|y-x|<\frac{1}{2}|x|}\frac{1}{\langle y-x\rangle^m}\,\frac{1}{\:|y|^{3-s}}\,\ud y
 \end{split}
\]
and we take non-restrictively $|x|\geqslant 1$ and $m\geqslant 4$. The first integral in the r.h.s.~of (ii) is estimated as
\[
 \begin{split}
  &\int_{|y-x|\geqslant\frac{1}{2}|x|}\frac{1}{\langle y-x\rangle^m}\,\frac{1}{\:|y|^{3-s}}\,\ud y \\
  &\quad=\;\int_{\substack{|y-x|\geqslant\frac{1}{2}|x| \\ |y|\geqslant \frac{3}{2}|x|\geqslant\frac{3}{2}}}\frac{1}{\langle y-x\rangle^m}\,\frac{1}{\:|y|^{3-s}}\,\ud y+\int_{\substack{|y-x|\geqslant\frac{1}{2}|x| \\ |y|< \frac{3}{2}|x|}}\frac{1}{\langle y-x\rangle^m}\,\frac{1}{\:|y|^{3-s}}\,\ud y \\
  &\quad\lesssim\;\frac{1}{\:|x|^{m-4}}\int_{\mathbb{R}^3}\frac{\ud y}{\langle y-x\rangle^4}+\frac{1}{\:|x|^m}\int_{|y|<\frac{3}{2}|x|}\frac{\ud y}{\:|y|^{3-s}}\;\lesssim\;\frac{1}{\:|x|^{m-4}}+\frac{1}{\:|x|^{m-s}}
 \end{split}
\]
and hence vanishes as $|x|\to +\infty$ faster than any power. Concerning the second integral in the r.h.s.~of (ii), since $|y-x|<\frac{1}{2}|x|$ implies $|y|>\frac{1}{2}|x|$, one has
%
%
\[
\int_{|y-x|<\frac{1}{2}|x|}\frac{1}{\langle y-x\rangle^m}\,\frac{1}{\:|y|^{3-s}}\,\ud y\;\lesssim\;\frac{1}{\:|x|^{3-s}}\int_{\mathbb{R}^3}\frac{\ud y}{\langle y-x\rangle^m}\;\lesssim\;\frac{1}{\:|x|^{3-s}}\,.
\]
Therefore, (ii) implies
\[\tag{iii}
 \psi(x)\;\lesssim\;\langle x\rangle^{-(3-s)}\,.
\]

Now, splitting
\[
 \psi\;=\;\frac{\Lambda_s\,\widehat{\theta}(0)}{|x|^{3-s}}+\psi_1\,,\qquad \psi_1\;=\;\Big(\,\frac{\widehat{\theta}(\cdot)-\widehat{\theta}(0)}{|\cdot|^s}\,\Big)^{\!\vee}
\]
and observing that $\widehat{\psi_1}\in L^2(\mathbb{R}^3)$ (owing to the bound $|\widehat{\theta}(p)-\widehat{\theta}(0)|\lesssim |p|$, valid for small $|p|$), whence also $\psi_1\in L^2(\mathbb{R}^3)$, we deduce that in order for the decay $\psi_1(x)\lesssim\langle x\rangle^{-(3-s)}$ (whose bound is \emph{not} square-integrable) implied by (iii) above to be compatible with the square-integrability of the continuous function $\psi_1$, $\psi_1$ itself must decay more that $\langle x\rangle^{-(3-s)}$, which allows us to conclude that the leading decay in $\psi$ is $|x|^{-(3-s)}$.

Since $\psi$ is continuous, positive, and with polynomial decay $|x|^{-(3-s)}$ at infinity, then $1/\psi$ is continuous, positive, and with polynomial growth at infinity. 
Since $\theta$ is a Schwartz function, we conclude that $V=\theta\frac{1}{\psi}$ is continuous and decays at infinity faster than any polynomial.

As a consequence, $V\in L^1(\mathbb{R}^3,\langle x\rangle^{2s-3}\ud x)$ and also $\|V\|_{\mathcal{R}_{s,3}}\lesssim\|V\|_{L^{3/s}}<+\infty$. This proves that $V$ satisfies part (i) of Assumption (I$_s$).

By construction, $(-\Delta)^{s/2}\psi=\theta$, whence
\[\tag{iv}
 ((-\Delta)^{s/2}+V)\psi\;=\;0
\]
distributionally. This also implies
\begin{equation*}\tag{v}
(-\Delta)^{-s/2}V\psi\;=\;-\psi\,.
\end{equation*}

Writing as usual $V=uv$ with $v=|V|^{\frac{1}{2}}$ and $u=|V|^{\frac{1}{2}}\mathrm{sign}(V)$, let us now set $\phi:=-u\psi$. Then (v) yields
\[\tag{vi}
 \psi\;=\;(-\Delta)^{-\frac{s}{2}}v\phi\,,
\]
which is precisely the relation between $\psi$ and $\phi$ given by \eqref{eq:psi}. $\phi$ cannot be identically zero, because so would be $\psi$, owing to (vi), which is not the case. Moreover, $\phi$ is square-integrable, because in the product $u\psi$ the $L^2_{\mathrm{loc}}$-function $\psi$ is multiplied by a function that decays more than polynomially at infinity. Therefore,
\[\tag{vii}
 \phi\;\in\; L^2(\R^3)\setminus\{0\}\,,
\]
and, multiplying (v) by $u$,
\begin{equation*}\tag{viii}
u(-\Delta)^{-\frac{s}{2}}v\phi\;=\;-\phi
\end{equation*}
as an identity in $L^2(\mathbb{R}^3)$.

Conditions (i), (iv), (vi), (vii), and (viii) above, owing to Theorem \ref{thm:resonance} and to the present definition of resonance, imply that $(-\Delta)^{s/2}+V$ is zero-energy resonant, with zero-energy resonance $\psi$.
\end{proof}

The counterpart result in 1D, which we state here below, has a completely analogous proof, that we then omit.

\begin{proposition}\label{prop:exist_res_1D}
Let $\theta\in\mathcal{S}(\R)$, with $\theta>0$. Define
\[
\psi\;:=\;\theta*\mathsf{G}_{s,0}\;=\;\theta*\frac{\Lambda_s}{\,|x|^{1-s}}\,,\qquad V\;:=\;-\frac{\theta}{\psi}\,,
\]
where $\Lambda_s$ is the constant defined in \eqref{eq:sfGlambda_asympt-2-1D}
Then $V$ satisfies part $(\mathrm{i})$ of Assumption $(\mathrm{I}^-_s)$, and $(-\Delta)^{s/2}+V$ is zero-energy resonant on $L^2(\mathbb{R})$, with zero-energy resonance $\psi$.
\end{proposition}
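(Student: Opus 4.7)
My plan is to follow verbatim the scheme of the proof of Proposition \ref{prop:exist_res} with the exponent $3-s$ systematically replaced by $1-s$ and with the dimension-one variants of the basic inequalities. First I would verify that $\psi=\theta*\mathsf{G}_{s,0}$ is strictly positive (convolution of two strictly positive functions), continuous (since $\widehat{\psi}(p)=\widehat{\theta}(p)/|p|^s$ lies in $L^1(\mathbb{R})$: integrability near $0$ is guaranteed by $s<1$, integrability at infinity by the rapid decay of $\widehat{\theta}$), and not square-integrable (since $\widehat{\theta}(0)\neq 0$ and $|p|^{-2s}$ is non-integrable near $0$ when $s>\tfrac{1}{2}$). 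Local square-integrability $\psi\in L^2_{\mathrm{loc}}(\mathbb{R})$ follows from the same Young/H\"older estimate on compacts used in the 3D proof.

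Next I would establish the pointwise decay $\psi(x)\lesssim\langle x\rangle^{-(1-s)}$ by splitting the convolution integral into the regions $\{|y-x|\geqslant\tfrac{1}{2}|x|\}$ (where the Schwartz decay of $\theta$ dominates) and $\{|y-x|<\tfrac{1}{2}|x|\}$ (where $|y|\sim|x|$, so $|y|^{-(1-s)}\sim|x|^{-(1-s)}$ and the Schwartz bound on $\theta$ integrates to a constant), exactly as in the 3D proof. Writing $\psi=\Lambda_s\widehat{\theta}(0)\,|x|^{-(1-s)}+\psi_1$ with $\widehat{\psi_1}(p)=(\widehat{\theta}(p)-\widehat{\theta}(0))/|p|^s$, the smoothness of $\widehat{\theta}$ at the origin together with $s<1$ gives $\widehat{\psi_1}\in L^2(\mathbb{R})$, hence $\psi_1\in L^2(\mathbb{R})$; combined with the previous pointwise bound this pins the genuine leading decay of $\psi$ at $|x|^{-(1-s)}$. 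In particular $\psi$ is continuous and strictly positive with polynomial decay of order $1-s$, so $1/\psi$ is continuous, strictly positive, and of at most polynomial growth. Consequently $V=-\theta/\psi$ is continuous and decays faster than any polynomial.

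The rapid decay of $V$ immediately yields $V\in L^1(\mathbb{R},\langle x\rangle^{2s-1}\,\mathrm{d}x)$. For the Rollnick-type condition $V\in\mathcal{R}_{s,1}$ I would invoke the Hardy--Littlewood--Sobolev inequality on $\mathbb{R}$: with exponent $\lambda=2(1-s)\in(0,1)$ the HLS scaling gives $\|V\|_{\mathcal{R}_{s,1}}\lesssim\|V\|_{L^{1/s}}$, which is finite because $V$ is Schwartz-decaying. This verifies part $(\mathrm{i})$ of Assumption $(\mathrm{I}^-_s)$.

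By construction $(-\Delta)^{s/2}\psi=\theta$ distributionally, whence $\bigl((-\Delta)^{s/2}+V\bigr)\psi=0$ distributionally and, equivalently, $(-\Delta)^{-s/2}(V\psi)=-\psi$. Writing $V=uv$ with $v=|V|^{1/2}$ and $u=|V|^{1/2}\mathrm{sign}(V)$, and setting $\phi:=-u\psi$, one has $\psi=(-\Delta)^{-s/2}v\phi$ and $u(-\Delta)^{-s/2}v\phi=-\phi$ in $L^2(\mathbb{R})$: here $\phi\not\equiv 0$ (otherwise $\psi\equiv 0$) and $\phi\in L^2(\mathbb{R})$ because the locally-$L^2$ function $\psi$ is multiplied by $u$, which decays faster than any polynomial. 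Theorem \ref{thm:resonance1D} then certifies that $\psi\in L^2_{\mathrm{loc}}(\mathbb{R})$ is a distributional solution of $\bigl((-\Delta)^{s/2}+V\bigr)\psi=0$; combined with $\psi\notin L^2(\mathbb{R})$ established above, this is precisely the definition of a zero-energy resonance. The only delicate point, as in the 3D case, is ruling out that $\psi\in L^2(\mathbb{R})$: this is ensured because $\widehat{\theta}(0)=\int\theta>0$ prevents cancellation of the leading $|x|^{-(1-s)}$ asymptotics, and $2(1-s)<1$ makes $|x|^{-(1-s)}$ non-square-integrable at infinity in one dimension.
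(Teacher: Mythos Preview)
Your proposal is correct and follows precisely the scheme the paper intends: the paper explicitly omits the proof of Proposition~\ref{prop:exist_res_1D}, stating only that it is ``completely analogous'' to that of Proposition~\ref{prop:exist_res}, and your adaptation of the exponents ($3-s\mapsto 1-s$, $L^{3/s}\mapsto L^{1/s}$ in the HLS step, and the integrability checks specific to $s\in(\tfrac12,1)$ in one dimension) carries through without issue.
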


\begin{remark}
 By means of a more refined discussion, in the same spirit of \cite{Klaus-Simon-1980-resonances}, we can identify the threshold coupling parameter $\lambda\in\mathbb{R}$, for a given potential $V$ in a suitable class, for which $(-\Delta)^{s/2}+\lambda V$ is zero-energy resonant. We do not develop this interesting approach here.
\end{remark}

\def\cprime{$'$}

\end{document}